\documentclass[12pt]{amsart}

\usepackage{amsfonts,amssymb,amsmath,amsxtra,mathtools,bm}
\usepackage{txfonts}
\usepackage{graphicx}
\usepackage{booktabs}
\usepackage{placeins}
\usepackage{cite}
\usepackage[shortlabels]{enumitem}
\usepackage{microtype}
\usepackage{tikz}
\usetikzlibrary{arrows.meta,calc,angles,quotes}

\usepackage[hidelinks]{hyperref}

\allowdisplaybreaks
\numberwithin{equation}{section}

\makeatletter
\@ifundefined{KV@Gin@alt}{\define@key{Gin}{alt}{}}{}
\makeatother

\newcommand{\tikzalt}[1]{}
\newcommand{\tikzaltend}{}

\newcommand{\dd}{\,\mathrm{d}}
\newcommand{\R}{\mathbb{R}}
\newcommand{\cT}{\mathcal{T}}
\newcommand{\cE}{\mathcal{E}}
\newcommand{\cV}{\mathcal{V}}

\theoremstyle{plain}
\newtheorem{theorem}{Theorem}[section]
\newtheorem{proposition}[theorem]{Proposition}
\newtheorem{lemma}[theorem]{Lemma}

\theoremstyle{definition}

\theoremstyle{remark}
\newtheorem{remark}[theorem]{Remark}

\newcommand{\papertitle}{A Finite Element Method for the Surface Biharmonic Problem\texorpdfstring{\\}{ }%
without Stabilization or Piola Transforms}

\hypersetup{
  pdftitle={\papertitle},
  pdfauthor={JiaChen Gu, Xia Ji, Shuo Zhang},
  pdfsubject={Nonconforming finite element methods for surface biharmonic problems},
  pdfkeywords={surface biharmonic problem, nonconforming finite element method, reduced enriched quartic element, discrete Miranda--Talenti-type estimate, piecewise planar surface}
}

\begin{document}

\title[Surface Biharmonic Elements without Stabilization or Piola Transforms]{\papertitle}

\author{JiaChen Gu}
\address{School of Mathematics and Statistics, Beijing Institute of Technology,
Beijing 100081, China}
\email{jiachen\_gu@bit.edu.cn}

\author{Xia Ji}
\address{School of Mathematics and Statistics, Beijing Institute of Technology,
Beijing 100081, China}
\address{Beijing Key Laboratory on MCAACI, Beijing Institute of Technology,
Beijing 100081, China}
\address{MSU-BIT-SMBU Joint Research Center of Applied Mathematics, Shenzhen
MSU-BIT University, Shenzhen 518172, China}
\email{jixia@bit.edu.cn}

\author{Shuo Zhang}
\address{State Key Laboratory of Mathematical Sciences and State Key Laboratory
of Scientific and Engineering Computing, Institute of Computational Mathematics
and Scientific/Engineering Computing, Academy of Mathematics and Systems Science,
Chinese Academy of Sciences, Beijing 100190, China}
\address{School of Mathematical Sciences, University of Chinese Academy of
Sciences, Beijing 100049, China}
\email{szhang@lsec.cc.ac.cn}

\thanks{This work is partially supported by the Strategic Priority Research
Program of the Chinese Academy of Sciences (Grant No. XDB0640000; S. Z.), the
National Natural Science Foundation of China (Grant Nos. 12371389 and 12171036;
X. J., and Grant No. 12271512; S. Z.), the Beijing Natural Science Foundation
(Grant No. Z210001; X. J.), and the Fundamental Research Funds for the Central
Universities (X. J.).}

\begin{abstract}
For the biharmonic problem on closed smooth surfaces, we construct a nonconforming finite element method that requires neither stabilization nor the Piola transform. The reduced enriched quartic (REQ) element relies solely on vertex values and scalar edge moments. The same local construction applies to every triangle of a piecewise planar surface. The bilinear form consists of elementwise Laplacian inner products and permits standard finite element assembly. By separating the planar vertex residual from the angle defect contribution, we establish weak discrete Miranda--Talenti-type estimates on both planar meshes and closed piecewise planar surfaces. These estimates guarantee uniform stability on sufficiently refined interpolating surface meshes. We prove the well-posedness of the method, first-order convergence in the energy norm, and second-order convergence in the broken $H^1$ and $L^2$ norms. Numerical tests on three surfaces show the predicted convergence rates and examine the discrete Miranda--Talenti constants and angle defect scaling.
\end{abstract}

\subjclass[2020]{Primary 65N30; Secondary 65N12, 65N15}

\keywords{surface biharmonic problem, nonconforming finite element method,
reduced enriched quartic element, discrete Miranda--Talenti-type estimate,
piecewise planar surface}

\maketitle
\tableofcontents

\section{Introduction}
\label{sec:introduction}
Higher-order surface equations arise in models of surface plate bending, membrane structures, and phase separation; representative examples include the surface Kirchhoff plate equation and the surface Cahn--Hilliard equation \cite{Walker2022,ElliottRanner2015}. In this paper, we study the surface biharmonic problem
\begin{equation}
	\Delta_\gamma^2u=f\quad\text{on }\gamma,
	\qquad
	\int_\gamma u\dd\sigma=0,
	\qquad
	\int_\gamma f\dd\sigma=0.
	\label{eq:intro-surface-biharmonic}
\end{equation}
Here, $\gamma\subset\mathbb R^3$ is a $C^4$ compact, connected, closed, and orientable two-dimensional surface, $\Delta_\gamma$ is the Laplace--Beltrami operator, and $\dd\sigma$ is the surface measure. Let $\mathring H^2(\gamma)=\{v\in H^2(\gamma):\int_\gamma v\dd\sigma=0\}$, and denote the $L^2(\gamma)$ inner product by $(\cdot,\cdot)_\gamma$. For $f\in L^2(\gamma)$ with zero mean, the variational form of \eqref{eq:intro-surface-biharmonic} reads: find $u\in\mathring H^2(\gamma)$ such that
\begin{equation}
	a(u,v):=(\Delta_\gamma u,\Delta_\gamma v)_\gamma
	=(f,v)_\gamma=:l(v)
	\qquad\forall v\in\mathring H^2(\gamma).
	\label{eq:continuous-problem-new}
\end{equation}

Following Dziuk's surface finite element framework \cite{Dziuk1988}, we approximate $\gamma$ by a piecewise planar triangulation $\Gamma_h$ with faces $K\in\mathcal T_h$; see \cite{DziukElliott2013,BonitoDemlowNochetto2020} for the geometric and approximation theories. Our aim is to construct a finite element method with scalar assembly, without Piola transforms or stabilization, based on the bilinear form
\begin{equation}
	a_h(w_h,v_h):=\sum_{K\in\mathcal T_h}\int_K
	\Delta_Kw_h\,\Delta_Kv_h\dd\sigma_h,
	\label{eq:intro-discrete-laplacian-form}
\end{equation}
where $\Delta_K$ is the Laplacian on the plane containing $K$, and $\dd\sigma_h$ is its area measure. Retaining the Laplacian form is relevant on curved surfaces: the Hessian energy differs from it by a Gaussian curvature term, as expressed by the Bochner--Grisvard identity
\begin{equation}
	(\Delta_\gamma u,\Delta_\gamma v)_\gamma
	=(\nabla_\gamma^2u,\nabla_\gamma^2v)_\gamma
	+(K_\gamma\nabla_\gamma u,\nabla_\gamma v)_\gamma,
	\qquad u,v\in H^2(\gamma),
	\label{eq:surface-bochner-grisvard}
\end{equation}
where $\nabla_\gamma^2$ is the covariant Hessian and $K_\gamma$ is the Gaussian curvature; see \cite{Reusken2020}, \cite[Lemma~2.7]{NeilanWan2025}, and \cite[Remark~2.1]{WuZhou2025}. The form \eqref{eq:intro-discrete-laplacian-form} directly discretizes \eqref{eq:continuous-problem-new} without an explicit curvature coefficient. Its stability requires control of the full broken Hessian by the piecewise Laplacian.

Existing methods for fourth-order surface equations approach these issues in several ways. Second-order splitting and mixed formulations introduce auxiliary unknowns and use continuous lower-order elements \cite{DziukElliott2008Eulerian,ElliottFritzHobbs2019,SteinGrinspunJacobsonWardetzky2019}. Walker \cite{Walker2022} developed a surface Hellan--Herrmann--Johnson method for Kirchhoff plates, including curved elements and various boundary conditions, while Pan--Rabczuk--Xu--Chen \cite{PanRabczukXuChen2019} combined extended Loop subdivision with mixed formulations. Primal discretizations include the continuous/discontinuous Galerkin method of Larsson--Larson \cite{LarssonLarson2017} for the surface biharmonic equation and the $C^0$ interior penalty method of Neilan--Wan \cite{NeilanWan2025} for surface Stokes stream functions. The latter avoids explicit Gaussian curvature calculations. On unfitted meshes, the TraceFEM of Neilan--Wan \cite{NeilanWanTrace2026} combines surface edge penalties with bulk-facet stabilization. Cai--Guo--Zhang \cite{CaiGuoZhang2026} use continuous linear elements with gradient recovery and stabilization.

For a primal method based on \eqref{eq:intro-discrete-laplacian-form}, a natural starting point is the planar theory of $H^2$ finite elements and discrete Miranda--Talenti estimates. Many classical planar $H^2$ elements use vertex derivatives as degrees of freedom to impose derivative continuity. Vertex derivative continuity also features in the planar Miranda--Talenti constructions reviewed by Gallistl--Tian \cite{GallistlTian2025}. Their two-dimensional continuous, $H^2$-nonconforming elements are $C^1$ at vertices and satisfy a strong discrete Miranda--Talenti property: the $L^2$ norm of the broken Hessian is bounded by that of the piecewise Laplacian with constant one. These planar constructions make elements with vertex derivative degrees of freedom natural candidates for extension to surfaces.

On a piecewise planar surface mesh, adjacent faces may be noncoplanar, and their vertex gradients belong to different tangent planes. This precludes the construction of $H^2$-conforming finite element spaces on folded surfaces and makes the extension of planar elements with vertex derivative continuity nontrivial. Inter-element Piola transforms provide a way to relate these tangential vectors, as in the surface Stokes method of Demlow--Neilan \cite{DemlowNeilan2024}. Wu--Zhou \cite{WuZhou2025} used this construction to extend the planar NZT element of Wang--Shi--Xu \cite{WangShiXu2007} to surfaces. Piola coupling allows the vertex derivative degrees of freedom to be used on the folded mesh, but does not itself provide the Miranda--Talenti control required by the pure Laplacian form. Their method augments the Laplacian energy with conormal derivative jump stabilization. For the surface NZT method, Wu--Zhou proved well-posedness, first-order convergence in the energy norm, and second-order convergence in the broken $H^1$ norm. Stabilization also plays a role in planar discrete Miranda--Talenti analysis, as illustrated by the jump estimates of Neilan--Wu \cite{NeilanWu2019} and the stabilized Hermite methods of Wu \cite{Wu2021}.

Based on conormal derivative jump stabilization, Wu--Zhou \cite{WuZhouMorley2026} further developed a stabilized Morley method for the stream-function formulation of surface Stokes. Its scalar degrees of freedom allow direct assembly on the piecewise planar surface without inter-element Piola transforms. The method combines trace-free Hessian and tangential gradient inner products with value jump stabilization, and achieves first-order convergence in the broken $H^2$ norm and second-order convergence in the broken $H^1$ norm. This energy differs from \eqref{eq:intro-discrete-laplacian-form}; already in the plane, the standard Morley space does not satisfy a discrete estimate controlling the broken Hessian by the piecewise Laplacian \cite{GallistlTian2025}.

In this paper, we construct a reduced enriched quartic (REQ) element that combines scalar assembly with the unstabilized Laplacian form \eqref{eq:intro-discrete-laplacian-form} and without the aid of Piola transforms. Starting from the 18-dimensional space of Zhang \cite[section~2.3.2]{Zhang2018}, we impose two constraints on each edge: the first-order function moment is determined by the endpoint values, and the second-order conormal derivative moment vanishes. The resulting 12-dimensional space has three vertex values and, on each edge, one function moment and two conormal derivative moments as degrees of freedom. It contains all quadratic polynomials and admits optimal-order local interpolation. Conormal derivatives are evaluated within each element plane, so the same scalar assembly rules apply to planar and folded meshes. We prove a weak discrete Miranda--Talenti-type estimate, meaning uniform equivalence of the Laplacian and Hessian energies with constants not necessarily equal to one.

The edge constraints and assembly conditions are central to this estimate. Elementwise integration by parts reduces the difference between the Laplacian and Hessian energies to vertex residuals. The REQ space does not impose vertex gradient continuity, and these residuals need not vanish. Instead, we control them on vertex patches: after unfolding a folded patch, the residual separates into a planar contribution and a correction governed by the angle defect. A characterization of the local harmonic kernel gives a bound for the planar contribution in terms of the patchwise Laplacian energy; the angle defect correction is bounded by a lower-order gradient term. This decomposition yields the weak Miranda--Talenti estimate on planar meshes with homogeneous boundary conditions and, under the stated geometric assumptions, on sufficiently refined closed surface meshes.

On every connected closed triangulation with non-degenerate triangles, the global harmonic kernel of the REQ space consists solely of constants. Thus the Laplacian form is positive definite on the zero-mean space. For the shape-regular, quasi-uniform interpolating surface mesh families considered here, the discrete Miranda--Talenti estimate further gives uniform control of the full broken $H^2$ norm on that space for sufficiently small $h$. Under the stated regularity assumptions, approximation and geometric consistency estimates yield first-order convergence in the energy norm and second-order convergence in the broken $H^1$ and $L^2$ norms.

Numerical experiments on a sphere, a torus, and an implicit surface illustrate the predicted convergence rates. Additional experiments examine the discrete Miranda--Talenti constants and angle defect scaling on the same mesh families and their perturbations.

The remaining sections of the paper are organized as follows. Section~\ref{sec:b4pm-space-mt} develops the REQ element and proves the discrete Miranda--Talenti-type estimates on planar and folded meshes. Section~\ref{sec:error-analysis-new} specifies the continuous setting and surface geometry, formulates the discrete problem, and establishes error estimates. Sections~\ref{sec:numerical-experiments} and \ref{sec:conclusion} present numerical results and conclusions.

\section{The REQ Element and Discrete Miranda--Talenti-Type Estimates}
\label{sec:b4pm-space-mt}

The notation $A\lesssim B$ indicates $A\leq CB$, where $C$ depends solely on the mesh regularity, an upper bound on the vertex valence, and the fixed local finite element space, independent of the mesh size, curvature, and specific mesh entities.

\subsection{Piecewise planar triangulations and broken differential operators}
\label{sec:preliminaries}

Let $\mathcal M_h$ be an oriented, connected, shape-regular, and quasi-uniform piecewise planar triangulation obtained by gluing Euclidean triangles along common edges; the boundary $\partial\mathcal M_h\ne\varnothing$ is allowed. The sets of its triangles, edges, and vertices are denoted by $\cT_h(\mathcal M_h)$, $\cE_h(\mathcal M_h)$, and $\cV_h(\mathcal M_h)$, respectively. The subscripts $\mathrm{int}$ and $\partial$ denote interior and boundary entities, respectively. Let $h_K=\operatorname{diam}K$, $h_e=|e|$, and $h=\max_{K\in\cT_h(\mathcal M_h)}h_K$.

For an interior edge $e=K^+\cap K^-$ and a vertex $z$, let $\omega_e=K^+\cup K^-$, $\omega_z=\bigcup_{K\ni z}K$, and $h_z=\max_{K\subset\omega_z}h_K$. We assume uniformly bounded vertex valences; hence these patches have uniformly bounded complexity and finite overlap. Compatible orientations determine a unit normal vector $\nu_K$ on each $K\in\cT_h(\mathcal M_h)$.

The area measure on $\mathcal M_h$ and the arc length measure on its mesh edges are denoted by $\dd\sigma_h$ and $\dd s_h$, respectively. For any triangle or edge $D$, let $P_r(D)$ denote the space of polynomials of degree at most $r$ on $D$. For an integer $m\geq0$, define the broken spaces, norms, and seminorms as follows:
\begin{gather*}
	H_h^m(\mathcal M_h)
	:=\{v\in L^2(\mathcal M_h):v|_K\in H^m(K),
	\ \forall K\in\cT_h(\mathcal M_h)\},
	\\
	\|v\|_{m,h}^2
	:=\sum_{K\in\cT_h(\mathcal M_h)}\|v\|_{m,K}^2,
	\qquad
	|v|_{m,h}^2
	:=\sum_{K\in\cT_h(\mathcal M_h)}|v|_{m,K}^2.
\end{gather*}
For a piecewise function $v$, let
\[
(\nabla_hv)|_K=\nabla_K(v|_K),
\qquad
(D_h^2v)|_K=D_K^2(v|_K),
\qquad
(\Delta_hv)|_K=\Delta_K(v|_K),
\]
and write
\[
(v,w)_h=\sum_{K\in\cT_h(\mathcal M_h)}(v,w)_K,
\qquad
(v,w)_e=\int_e vw\dd s_h.
\]

For an interior vertex $z$, enumerate its adjacent triangles as $K_1,\ldots,K_m$, and let $\theta_{r,z}$ be the interior angle of $K_r$ at $z$. Following \cite{Christiansen2024Regge,Gawlik2020}, define the angle defect by:
\begin{equation}
	\kappa_z
	:=2\pi-\sum_{K_r\subset\omega_z}\theta_{r,z}.
	\label{eq:angle-defect-definition-new}
\end{equation}
The angle defect is signed, and no convexity assumption is imposed on the vertex patches. Successive rigid unfolding eliminates dihedral angles, and $\kappa_z$ is the closure defect after traversing a full turn around the vertex in the unfolded fan. In particular, $\kappa_z=0$ implies that the patch is intrinsically flat, but does not require adjacent triangles to be coplanar in $\mathbb R^3$. Figure~\ref{fig:vertex-star-unfolding} illustrates the angle defect as the closure defect of a fan obtained by cutting along a radial edge.

\begin{figure}[!htbp]
	\centering
	\tikzalt{Two-panel schematic of the angle defect at a vertex. Cutting the
		five-triangle conical patch around z along the radial edge e and rigidly
		unfolding it produces a planar open fan; the angular gap between the two
		copies of e is the angle defect at z.}
	\begin{tikzpicture}[scale=0.9,>=Latex,font=\small]
		\begin{scope}[xshift=-0.45cm]
			\coordinate (z)  at (0,1.12);
			\coordinate (r1) at (-1.78,-0.34);
			\coordinate (r2) at (0,-0.08);
			\coordinate (r3) at (1.78,-0.34);
			\coordinate (r4) at (1.02,-0.90);
			\coordinate (r5) at (-1.02,-0.90);
			\fill[purple!10] (z)--(r1)--(r2)--cycle;
			\fill[orange!12] (z)--(r2)--(r3)--cycle;
			\fill[green!10]  (z)--(r3)--(r4)--cycle;
			\fill[blue!10]   (z)--(r4)--(r5)--cycle;
			\fill[yellow!15] (z)--(r5)--(r1)--cycle;
			\draw[dashed,gray!75,line width=1pt] (r1)--(r2)--(r3);
			\draw[dashed,gray!70,line width=1pt] (z)--(r2);
			\draw[line width=1pt] (z)--(r1) (z)--(r3);
			\draw[line width=1pt] (r3)--(r4)--(r5)--(r1);
			\foreach \p in {r4,r5}{\draw[line width=1pt] (z)--(\p);}
			\draw[very thick,red!70!black] (z)--(r4);
			\node at (1.12,-0.15) {$K_1$};
			\node at (0.95,0.80) {$K_2$};
			\node at (-0.95,0.80) {$K_3$};
			\node at (-1.12,-0.15) {$K_4$};
			\node at (0,-0.45) {$K_5$};
			\node[left] at (0.51,0.11) {$e$};
			\fill (z) circle (1.5pt) node[above] {$z$};
			\node at (0,-1.38) {(a) Conical vertex patch $\omega_z$};
		\end{scope}
		
		\draw[->,very thick] (1.45,-0.20)--(5.05,-0.20)
		node[midway,above] {Cut along edge $e$};
		
		\begin{scope}[xshift=6.75cm,yshift=-0.18cm]
			\coordinate (o) at (0,0);
			\foreach \a/\b/\col in {
				20/84/green!10,84/148/orange!15,148/212/purple!10,
				212/276/yellow!12,276/340/blue!10}{
				\fill[\col] (o)--(\a:1.55)--(\b:1.55)--cycle;
				\draw[line width=1pt] (\a:1.55)--(\b:1.55);
				\draw[line width=1pt] (o)--(\a:1.55);
			}
			\draw[line width=1pt] (o)--(340:1.55);
			\draw[very thick,red!70!black] (o)--(20:1.55) (o)--(340:1.55);
			\node at (52:0.95) {$K_1$};
			\node at (116:0.95) {$K_2$};
			\node at (180:0.95) {$K_3$};
			\node at (244:0.95) {$K_4$};
			\node at (308:0.95) {$K_5$};
			\fill (o) circle (1.5pt) node[left] {$z$};
			\draw[line width=1pt,red!70!black]
			(340:0.43) arc[start angle=340,end angle=380,radius=0.43];
			\node[right] at (0.46,0) {$\kappa_z$};
			\node[align=center] at (0,-1.88) {(b) Unfolded open fan};
		\end{scope}
	\end{tikzpicture}
	\tikzaltend
	\caption{Angle defect at a vertex: the closure defect of the unfolded fan is $\kappa_z$.}
	\label{fig:vertex-star-unfolding}
\end{figure}

\subsection{Reduced enriched quartic (REQ) element}
\label{sec:local-req-element}

Let $K$ be an oriented non-degenerate planar triangle with positively oriented vertices $a_1,a_2,a_3$ and barycentric coordinates $\lambda_1,\lambda_2,\lambda_3$ (with $\lambda_4=\lambda_1$). Each edge inherits the counterclockwise orientation of $\partial K$; in the planar case, this orientation is the standard one on $\mathbb R^2$, whereas on a piecewise planar surface, its selection is compatible with $\nu_K$. Define $b_i=(\lambda_i^2\lambda_{i+1}-\lambda_i\lambda_{i+1}^2)\lambda_1\lambda_2\lambda_3$ and the enriched quartic shape function space
\begin{equation}
	S(K)=P_4(K)\oplus\operatorname{span}\{b_1,b_2,b_3\},
	\label{eq:enriched-quartic-space}
\end{equation}
where $S(K)\subset P_6(K)$. This space and its $18$ unisolvent degrees of freedom (three vertex values and edge moments defined below) are inherited from \cite[section~2.3.2 and Lemma~8]{Zhang2018}.

For an oriented edge $e$, denote its start and end vertices by $z_0$ and $z_1$, respectively, and parameterize it as $z_0+t(z_1-z_0)$ with $0\leq t\leq1$. Employ the orthogonal weights $m_0(t)=1$, $m_1(t)=\frac12-t$, and $m_2(t)=t^2-t+\frac16$. For $e\subset\partial K$, let $n_{K,e}$ be the unit outer conormal vector of $\partial K$ within the plane of $K$, and define the normalized moment degrees of freedom
\begin{align}
	D_e^r(v)
	&=\frac1{|e|}\int_e v\,m_r(t)\dd s_h,
	&&r=0,1,
	\label{eq:value-moments}\\
	D_e^{n,r,K}(v)
	&=\frac1{|e|}\int_e
	\partial_{n_{K,e}}v\,m_r(t)\dd s_h,
	&&r=0,1,2.
	\label{eq:normal-moments}
\end{align}
By imposing two linear reduction constraints on each edge of $K$, we define the local REQ space as
\begin{equation}
	V(K)
	=\left\{v\in S(K):
	\begin{aligned}
		&v(z_0)-v(z_1)-12D_e^1(v)=0,\\
		&D_e^{n,2,K}(v)=0
	\end{aligned}
	\quad\forall e\subset\partial K\right\}.
	\label{eq:filtered-local-space}
\end{equation}
These six linear constraints define the reduction from $S(K)$ to $V(K)$; reversing the edge orientation simultaneously reverses the sign of the endpoint difference and $m_1$, so \eqref{eq:filtered-local-space} is independent of the edge orientation.

\begin{proposition}[Local structure and interpolation]
	\label{lem:local-unisolvence-new}
	\label{lem:local-interpolation-new}
	The dimension of the space $V(K)$ is $12$, and the degrees of freedom
	\begin{equation}
		\left\{
		v(a_i),\ D_e^0(v),\ D_e^{n,0,K}(v),\ D_e^{n,1,K}(v):
		i=1,2,3,\ e\subset\partial K
		\right\}
		\label{eq:twelve-dofs-new}
	\end{equation}
	are unisolvent on $V(K)$, and $P_2(K)\subset V(K)$. For $v\in H^3(K)$, let $\Pi_Kv\in V(K)$ match these degrees of freedom. Then $\Pi_K$ is well defined, $\Pi_Kp=p$ for any $p\in P_2(K)$, and for $m=0,1,2$,
	\begin{equation}
		\|v-\Pi_Kv\|_{m,K}
		\lesssim h_K^{3-m}|v|_{3,K}.
		\label{eq:local-interpolation-new}
	\end{equation}
\end{proposition}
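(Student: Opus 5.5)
We use the known unisolvence of the $18$ degrees of freedom of $S(K)$ from \cite[Lemma~8]{Zhang2018}. These are the three vertex values $v(a_i)$, the value moments $D_e^0,D_e^1$, and the conormal moments $D_e^{n,0,K},D_e^{n,1,K},D_e^{n,2,K}$ on each of the three edges, giving $3+6+9=18$. Since these functionals are unisolvent on $S(K)$, the map $\Phi:S(K)\to\R^{18}$ taking $v$ to its degrees of freedom is a linear bijection. In these coordinates, the six constraints in \eqref{eq:filtered-local-space} fix two coordinates on each edge:
\[
D_e^1(v)=\tfrac1{12}\bigl(v(z_0)-v(z_1)\bigr),
\qquad
D_e^{n,2,K}(v)=0 .
\]
Each constraint involves only coordinates that are either free or fixed by the other constraints. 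Hence $V(K)=\Phi^{-1}(A)$, where $A$ is the graph of a linear map from the twelve remaining coordinates $\R^{12}$ to the six constrained ones. It follows that $\dim V(K)=12$. Moreover, any prescription of the twelve functionals in \eqref{eq:twelve-dofs-new} extends uniquely to an element of $V(K)$, which gives unisolvence.

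Next we check $P_2(K)\subset V(K)$. Since $P_2\subset P_4\subset S(K)$, it suffices to verify the two constraints. Let $p\in P_2(K)$. Its restriction to $e$ is a quadratic $q(t)$. Write $q$ in the orthogonal basis $m_0,m_1,m_2$, which is Legendre-like on $[0,1]$. The identity $v(z_0)-v(z_1)=12D_e^1(v)$ holds for $1$ and $t$, since $\int_0^1 t(\tfrac12-t)\,dt=-\tfrac1{12}$ and $q(0)-q(1)=-(\text{coefficient of }t)$. For $t^2$ we also need to check it: $\int_0^1 t^2(\tfrac12-t)\,dt=\tfrac16-\tfrac14=-\tfrac1{12}$, and $0-1=-1=12\cdot(-\tfrac1{12})$. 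So the identity holds for all quadratics. The conormal derivative of $p$ is affine along $e$, so it is orthogonal to $m_2$, and $D_e^{n,2,K}(p)=0$.

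For interpolation, the functionals in \eqref{eq:twelve-dofs-new} are well defined for $v\in H^3(K)$: vertex values by Sobolev embedding in two dimensions, and edge moments of $v$ and $\nabla v$ by trace. Unisolvence then makes $\Pi_K$ well defined. Since $P_2(K)\subset V(K)$, we get $\Pi_Kp=p$. The estimate \eqref{eq:local-interpolation-new} follows by the standard Bramble--Hilbert argument on a reference triangle $\hat K$. The normalized degrees of freedom map correctly under affine maps: $D_e^r$ is invariant, and $D_e^{n,r,K}$ scales like $h^{-1}$ times combinations of reference tangential and normal derivative moments. The latter point needs care, because the conormal direction is not preserved by the affine map. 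The tangential derivative moments, however, are determined by the value moments and endpoint values through integration by parts along the edge. Consequently $V(K)$ is not affine-equivalent in the strict sense, but the interpolant commutes with pullback up to this change of moments. The space $\hat V$ obtained by pullback is the reduced space of $\hat S$ under pulled-back constraints. Its constraints, however, depend on $K$ through the conormal direction.

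We expect this to be the main obstacle. The plan is to note that $\Pi_K$ is determined by the $H^3$-stable functionals $v(a_i)$, $\int_e v$, and $\int_e\nabla v\, m_r$ for $r=0,1$. The $m_2$ conormal constraint is not affine invariant in this form, so it must be handled with care. We resolve this by a compactness argument: the family of pulled-back spaces and functionals is parametrized continuously by the shape of $\hat K$ within a compact set fixed by shape regularity. Unisolvence holds for each shape, as shown above, so $\|\hat\Pi\|_{H^3(\hat K)\to H^2(\hat K)}$ is uniformly bounded. Bramble--Hilbert then gives $\|\hat v-\hat\Pi\hat v\|_{m,\hat K}\lesssim|\hat v|_{3,\hat K}$, and scaling back yields $h_K^{3-m}|v|_{3,K}$.
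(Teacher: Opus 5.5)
Your proposal is correct and follows the paper's proof. The dimension count $\dim V(K)=12$ and unisolvence come from the inherited 18 degrees of freedom, with the six constraints solved for $D_e^1$ and $D_e^{n,2,K}$. The estimate comes from quadratic reproduction, a uniform bound on the scaled interpolation matrix (continuity in the normalized geometry plus compactness under shape regularity), Bramble--Hilbert, and scaling.

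One harmless inaccuracy: $V(K)$ is in fact affine invariant. Integrating by parts along $e$ shows that the value constraint is equivalent to $\int_e\partial_tv\,m_2\dd s_h=0$, so $V(K)=\{v\in S(K):\int_e\nabla v\,m_2\dd s_h=0\ \forall e\subset\partial K\}$. Since the moments $\int_e\partial_tv\,m_r\dd s_h$, $r=0,1$, are fixed by the vertex values and $D_e^0$, $\Pi_K$ commutes with affine pullback, and your compactness step could be replaced by the standard affine-equivalence argument.
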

\begin{proof}
	The data \eqref{eq:twelve-dofs-new} reconstruct the omitted edge degrees of freedom as $D_e^1(v)=(v(z_0)-v(z_1))/12$ and $D_e^{n,2,K}(v)=0$. Since the inherited $18$ degrees of freedom are unisolvent on $S(K)$, they determine a unique member in $V(K)$, and thus $\dim V(K)=12$; moreover, $P_2(K)\subset V(K)$ because every quadratic function satisfies the reduction constraints.
	
	The embedding $H^3(K)\hookrightarrow C^1(\overline K)$ ensures that the interpolation data are well defined. After normalizing $K$ and scaling the conormal derivative functionals by $h_K$, the combined matrix of the reduction constraints and the retained degrees of freedom depends continuously on the normalized triangle geometry and is invertible by the inherited unisolvence on $S(K)$. Shape regularity therefore yields a uniform bound on its inverse. Quadratic reproduction, the Bramble--Hilbert lemma, and scaling give \eqref{eq:local-interpolation-new}.
\end{proof}

\begin{lemma}[Local harmonic kernel]
	\label{lem:local-harmonic-kernel-new}
	For any non-degenerate triangle $K$,
	\begin{equation}
		\{v\in V(K):\Delta_Kv=0\}
		=\{p\in P_2(K):\Delta_Kp=0\}.
		\label{eq:local-harmonic-kernel}
	\end{equation}
\end{lemma}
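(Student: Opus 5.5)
The plan is to show that every $v\in V(K)$ with $\Delta_Kv=0$ is a quadratic; the inclusion ``$\supseteq$'' is immediate from $P_2(K)\subset V(K)$ (Proposition~\ref{lem:local-interpolation-new}). I will first rewrite the reduction constraints in an affine-invariant form, then pass to a reference triangle, and finally carry out a finite-dimensional kernel computation.

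\emph{Step 1: the constraints concern the full gradient.} On an edge $e$ with unit tangent $\tau$, integrate by parts using $m_2(0)=m_2(1)=\tfrac16$ and $\tfrac{\dd}{\dd s}m_2=(2t-1)/|e|=-2m_1/|e|$. This gives
\[
\frac1{|e|}\int_e\partial_\tau v\,m_2\dd s_h
=\tfrac16\bigl(v(z_1)-v(z_0)\bigr)+\tfrac{2}{|e|}\int_e v\,m_1\dd s_h
=-\tfrac16\bigl(v(z_0)-v(z_1)-12D_e^1(v)\bigr).
\]
So the first constraint in \eqref{eq:filtered-local-space} says that the tangential $m_2$-moment vanishes. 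Together with $D_e^{n,2,K}(v)=0$, it is equivalent to $\int_e\nabla_K v\,m_2\dd s_h=0$ for every edge $e$. This vector condition is invariant under affine changes of variables, since the gradient transforms by a constant matrix and $m_2$ depends only on the edge parameter $t$. The enrichment functions $b_i$ are defined through barycentric coordinates, so $S(K)$ is affine invariant as well. Hence, with the affine map $F:\hat K\to K$ from the reference triangle, $V(K)=\{\hat v\circ F^{-1}:\hat v\in V(\hat K)\}$.

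\emph{Step 2: reformulation.} Write $F(\hat x)=B\hat x+b$. Then $\Delta_K v=0$ if and only if $\hat L_A\hat v:=\operatorname{div}(A\hat\nabla\hat v)=0$, where $A=B^{-1}B^{-T}$ is symmetric positive definite. Up to scaling, $A$ ranges over a two-parameter family. The lemma is therefore equivalent to the following statement: for every SPD $A$, the kernel of $\hat L_A$ on $V(\hat K)$ is the five-dimensional space of $A$-harmonic quadratics.

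\emph{Step 3: degree filtration.} For each $k\ge1$, the operator $\hat L_A$ maps homogeneous polynomials of degree $k$ onto those of degree $k-2$, with a two-dimensional kernel. Write $\hat v=p+\sum_i c_ib_i$ with $p\in P_4$, and require $\hat L_A\hat v=0$.
\begin{itemize}[leftmargin=*]
\item Degree $6$: the top-degree parts of the $b_i$ must combine into an $A$-harmonic sextic. This leaves at most a small family of admissible $c=(c_1,c_2,c_3)$.
\item Lower degrees: the remaining equations determine $p$ modulo $A$-harmonic polynomials of degrees $3$ and $4$.
\end{itemize}
Up to $A$-harmonic quadratics, the candidate harmonic elements then form a finite-dimensional family, with at most about $2+2+2=6$ free parameters from degrees $3$, $4$ and the admissible top parts. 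The six vector constraints $\int_e\hat\nabla\hat v\,m_2=0$ (two scalar equations per edge after Step~1) must force all of these parameters to vanish.

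\emph{Main obstacle.} The difficulty is proving that the resulting square, or overdetermined, linear system has full rank for \emph{every} SPD $A$, not only for generic $A$. I would first compute its determinant, or a suitable maximal minor, symbolically as a polynomial in the entries of $A$. I would then try to show that it is strictly positive on the SPD cone, for instance by expressing it as a sum of squares times a power of $\det A$. This is the step I expect to be hardest. If a clean certificate fails, the fallback is an energy argument on $\hat K$. The conormal $m_0$- and $m_1$-moments together with the $m_2$-orthogonality relate $\int_{\hat K}A\hat\nabla\hat v\cdot\hat\nabla w$, for harmonic $\hat v$ and suitable test functions $w$, to edge data. From this I would try to show that the cubic and higher components have zero $A$-energy.
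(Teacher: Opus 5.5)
Your Steps~1--3 are a valid reduction, but the argument stops exactly where the content of the lemma begins. Your observation that the two reduction constraints on $e$ together say $\int_e\nabla_Kv\,m_2\dd s_h=0$ is correct and useful; only your display carries a spurious factor $1/|e|$ on the left. The gap is the claim that the resulting linear system has full rank for \emph{every} SPD $A$. You leave this as the ``main obstacle,'' supported only by a hoped-for sum-of-squares certificate or an unspecified energy argument, so the inclusion $\subseteq$ is not proved. Part of what makes it look hard is that you lump bubble and polynomial parameters into one $A$-dependent system; the paper decouples them.

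First, the bubble coefficients are eliminated by $\Delta_Kv=0$ alone, with no edge constraints. Since $\Delta_Kq\in P_2(K)$ for $q\in P_4(K)$, both the quartic and the cubic parts of $\Delta_K\sum_i\vartheta_ib_i$ must vanish, i.e.\ both the degree-$6$ and the degree-$5$ parts of $\sum_i\vartheta_ib_i$ must be harmonic. Your degree-$6$ condition alone does not suffice, and you never use the degree-$5$ one. For an equilateral triangle, the sextic part of $b_1+b_2+b_3$ vanishes exactly on the three edge directions and the three altitude directions, which are six lines at $30^\circ$ spacing. So it is a multiple of $\operatorname{Im}(\zeta^6)$ in a suitable complex coordinate and is harmonic. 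The paper restricts to the lines $\lambda_3=0$, $\lambda_2=0$, $\lambda_1=0$ to get a $6\times3$ matrix in $g_{22},g_{23},g_{33}$ (your $A$). It verifies full column rank on the whole SPD cone via two explicit $3\times3$ minors and a third minor at their only admissible common zero.

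Second, once $v\in P_4(K)$, do not pass to the reference triangle. Work in the Euclidean plane of $K$ with a vertex at the origin and complex coordinate $\zeta$. Then the homogeneous cubic and quartic parts are $v_3=\operatorname{Re}(\alpha\zeta^3)$ and $v_4=\operatorname{Re}(\beta\zeta^4)$, while the parts of degree at most two satisfy the constraints (their gradients are affine and $m_2\perp P_1$). On an edge from that vertex with terminal vector $\tau$, homogeneity turns your vector form of the constraints into $\tfrac1{180}\nabla v_3(\tau)+\tfrac1{120}\nabla v_4(\tau)=0$. Identify $\nabla u$ with $u_x+\mathrm i u_y$, so that $\nabla\operatorname{Re}(c\zeta^k)=\overline{kc\zeta^{k-1}}$. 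The condition then becomes $\tfrac1{60}\overline{\tau^2(\alpha+2\beta\tau)}=0$, i.e.\ $\alpha+2\beta\tau=0$. The paper obtains the same relation from $v_3(\tau)+2v_4(\tau)=0$ and $2\partial_nv_3(\tau)+3\partial_nv_4(\tau)=0$. Two distinct edges from the vertex force $\beta=\alpha=0$, for every triangle shape and with no positivity analysis on the SPD cone.

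With these two steps the lemma follows. Without them, or a completed symbolic rank certificate that you do not supply, your proposal is a reduction rather than a proof.
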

\begin{proof}
	It suffices to prove the inclusion from left to right. Write $v=q+\sum_{i=1}^3\vartheta_i b_i$, where $q\in P_4(K)$. Comparing the homogeneous cubic and quartic parts of $\Delta_Kv$ yields the $6\times3$ system analyzed in Appendix~\ref{app:local-kernel-bubbles}; its coefficient matrix has full column rank for every non-degenerate triangle. Therefore, all $\vartheta_i$ are zero, and $v\in P_4(K)$.
	
	The homogeneous cubic and quartic parts $v_3$ and $v_4$ are therefore harmonic. According to Appendix~\ref{app:local-kernel-high-order}, the two reduction constraints on each of two distinct edges emanating from a vertex force their complex coefficients to vanish. Consequently, $v_3=v_4=0$, implying that $v\in P_2(K)$.
\end{proof}

\subsection{Global REQ space}
\label{sec:global-b4pm-spaces}

For an interior edge $e=K^+\cap K^-$, choose a common unit tangent vector $t_e$, and label $K^+$ such that its local boundary orientation aligns with $t_e$. Denote the start and end vertices by $z_0$ and $z_1$, respectively. When comparing moments across $e$, use the common parameterization $z_0+t(z_1-z_0)$ in the weight $m_r$. Let $n_e^\pm=n_{K^\pm,e}$ be the unit outer conormal vectors in the planes of the two elements. We write $[v]_e=v^+-v^-$ and $\{v\}_e=(v^++v^-)/2$, and define
\begin{equation}
	J_{n,e}(v)
	=\partial_{n_e^+}v^++\partial_{n_e^-}v^-,
	\qquad
	A_{n,e}(v)
	=\frac12\left(\partial_{n_e^+}v^+-\partial_{n_e^-}v^-\right).
	\label{eq:outward-normal-jump-new}
\end{equation}
Both conormal derivatives are evaluated relative to the outer conormal vector of their respective element. If $K^+$ and $K^-$ are coplanar, then $n_e^-=-n_e^+$, and $J_{n,e}$ and $A_{n,e}$ reduce to the standard jump and average of normal derivatives, respectively. Figure~\ref{fig:polyhedral-edge-geometry} illustrates the geometry when the two elements are not coplanar.

\begin{figure}[htbp]
	\centering
	\tikzalt{Two noncoplanar planar triangles, K-minus and K-plus, meet along
		the edge e. Arrows indicate their element normals and the outward conormals
		used to define conormal jumps and averages; the two conormals need not be
		opposite.}
	\begin{tikzpicture}[scale=0.85,>=Latex,font=\small]
		\coordinate (A) at (0,1);
		\coordinate (B) at (0.5,-1);
		\coordinate (C) at (-3,-2);
		\coordinate (D) at (3,-1.5);
		\coordinate (E) at (4,0);
		\coordinate (F) at (1.5,2.5);
		\coordinate (M) at (0.125,0.5);
		\coordinate (G) at (0.25,0);
		
		\fill[blue!10] (A)--(F)--(E)--(B)--cycle;
		\draw[line width=1pt] (A)--(C)--(B)--cycle;
		\draw[line width=1pt] (A)--(B)--(D)--cycle;
		\draw[very thick] (A)--(B);
		\node at (-1.05,-0.72) {$K^-$};
		\node at (1.25,-0.58) {$K^+$};
		\node[left] at (G) {$e$};
		
		\draw[->,line width=1pt,orange!80!black] (M)--(-0.85,1)
		node[midway,above,text=black] {$n_e^+$};
		\draw[->,line width=1pt,blue!70!black] (M)--(1.25,0.95)
		node[midway,above,text=black] {$n_e^-$};
		\draw[->,line width=1pt,blue!70!black] (-1.55,-0.92)--(-2.12,-0.02)
		node[left,text=black] {$\nu_{K^-}$};
		\draw[->,line width=1pt,orange!80!black] (1.58,-0.72)--(2.22,0.16)
		node[right,text=black] {$\nu_{K^+}$};
	\end{tikzpicture}
	\tikzaltend
	\caption{Adjacent piecewise planar triangles sharing an edge $e$.}
	\label{fig:polyhedral-edge-geometry}
\end{figure}

Based on this notation, define
\begin{equation}
	\begin{aligned}
		V_h(\mathcal M_h):=\bigl\{v_h:\;&
		v_h|_K\in V(K)\quad\forall K\in\cT_h(\mathcal M_h),\\
		&v_h\text{ is single-valued at each vertex},\\
		&\int_e[v_h]p\dd s_h=0
		\quad\forall p\in P_1(e),\ \forall e\in\cE_{h,{\rm int}}(\mathcal M_h),\\
		&\int_eJ_{n,e}(v_h)q\dd s_h=0
		\quad\forall q\in P_2(e),\ \forall e\in\cE_{h,{\rm int}}(\mathcal M_h)
		\bigr\}.
	\end{aligned}
	\label{eq:global-space-new}
\end{equation}
Under the local reduction constraints, a set of global degrees of freedom is given by the function value at each vertex, the zeroth-order function moment on each edge, and the zeroth- and first-order conormal derivative moments. Therefore, $\dim V_h(\mathcal M_h)=\#\cV_h(\mathcal M_h)+3\#\cE_h(\mathcal M_h)$.

\begin{remark}[Standard assembly]
	\label{rem:standard-assembly}
	All conditions in \eqref{eq:global-space-new} involve scalar data only: value moments use restrictions of $v_h$, and each conormal derivative is evaluated relative to the conormal vector of the element to which that restriction belongs. Since every element of $\mathcal M_h$ is a planar triangle and the same local construction is applied independently on each element, the contributions to the broken Laplacian stiffness matrix take the same cell-wise form as for a planar triangulation. The element stiffness matrices are computed from the local planar geometry determined by the 3D vertex coordinates. No identification of tangent planes or vertex derivatives is performed during assembly.
\end{remark}

In the planar case $\mathcal M_h=\Omega\subset\R^2$, $V_{h,0}\subset V_h(\Omega)$ is defined by setting $v_h(z)=0$ at each boundary vertex and $D_e^0(v_h)=D_e^{n,0,K}(v_h)=D_e^{n,1,K}(v_h)=0$ on each boundary edge $e\subset\partial K\cap\partial\Omega$, where $n_{K,e}$ is the exterior normal of $\Omega$. For a closed surface $\mathcal M_h$, define $\mathring V_h(\mathcal M_h)=\{v_h\in V_h(\mathcal M_h):\int_{\mathcal M_h}v_h\dd\sigma_h=0\}$ to eliminate the constant kernel.

\begin{proposition}[Global harmonic kernel]
	\label{prop:global-harmonic-kernel}
	On every connected closed piecewise planar triangulation $\mathcal M_h$ with non-degenerate triangles,
	\[
	\{v_h\in V_h(\mathcal M_h):\Delta_hv_h=0\}=\mathbb R.
	\]
	Consequently, $(\Delta_h\cdot,\Delta_h\cdot)_h$ is positive definite on $\mathring V_h(\mathcal M_h)$ without requiring any smallness condition on the mesh size.
\end{proposition}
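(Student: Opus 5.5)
The plan is to reduce to piecewise harmonic quadratics using Lemma~\ref{lem:local-harmonic-kernel-new}, and then to kill the gradient with an elementwise Green identity in which all edge terms cancel. The inclusion $\mathbb R\subset\{v_h\in V_h(\mathcal M_h):\Delta_hv_h=0\}$ is immediate. Every constant lies in $P_2(K)\subset V(K)$ by Proposition~\ref{lem:local-unisolvence-new}. Constants are single-valued at vertices, have zero value jumps, and have vanishing conormal derivatives, so all conditions in \eqref{eq:global-space-new} hold.

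For the converse, take $v_h\in V_h(\mathcal M_h)$ with $\Delta_hv_h=0$. By Lemma~\ref{lem:local-harmonic-kernel-new}, each restriction $v_h|_K$ is a harmonic quadratic polynomial on the plane of $K$. Next we upgrade the weak interelement conditions to pointwise ones.

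\emph{Value continuity.} On an interior edge $e$, both traces $v_h^\pm|_e$ are quadratic in the common parameter $t$. Since $v_h$ is single-valued at vertices, the jump $[v_h]_e$ is a quadratic vanishing at $t=0$ and $t=1$, hence $[v_h]_e=c\,t(1-t)$. Testing the moment condition with $p=1$ gives $c\int_0^1t(1-t)\dd t=0$, so $c=0$ and $v_h$ is continuous across $e$.

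\emph{Conormal jump.} Each conormal derivative $\partial_{n_e^\pm}v_h^\pm$ is computed within the plane of its own element. Since $v_h^\pm$ is quadratic, this derivative is affine along $e$, so $J_{n,e}(v_h)\in P_1(e)\subset P_2(e)$. Choosing $q=J_{n,e}(v_h)$ in \eqref{eq:global-space-new} gives $J_{n,e}(v_h)=0$ pointwise on $e$.

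\emph{Green identity.} On each planar triangle $K$, Green's formula gives $0=\int_K\Delta_Kv_h\,v_h\dd\sigma_h=-\|\nabla_Kv_h\|_{0,K}^2+\int_{\partial K}\partial_{n_{K,e}}v_h\,v_h\dd s_h$. Summing over $K$ and using that $\mathcal M_h$ is closed, every edge is interior and is shared by exactly two elements $K^\pm$. Because $v_h^+=v_h^-$ on $e$, the two boundary contributions combine into $\int_eJ_{n,e}(v_h)\,v_h\dd s_h$. Only the outward conormals of the respective elements enter here, so no tangent-plane identification is needed even when $K^\pm$ are not coplanar. This edge term vanishes by the previous step, so $|v_h|_{1,h}^2=0$.

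Hence $v_h$ is constant on each triangle. Continuity across edges and connectedness of $\mathcal M_h$ through edge adjacency then make $v_h$ globally constant.

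For positive definiteness, suppose $v_h\in\mathring V_h(\mathcal M_h)$ satisfies $(\Delta_hv_h,\Delta_hv_h)_h=0$. Then $\Delta_hv_h=0$, so $v_h$ is a constant, and its zero mean forces $v_h=0$. Since $\mathring V_h(\mathcal M_h)$ is finite-dimensional, this gives positive definiteness. No mesh-size condition enters, because the argument uses only non-degeneracy of the triangles, through Lemma~\ref{lem:local-harmonic-kernel-new}.

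The step needing most care is the reduction to pointwise conditions. It relies essentially on the local kernel lemma: without it, $J_{n,e}$ would not lie in $P_2(e)$ and $[v_h]_e$ would not be determined by its $P_1$ moments, and the edge terms in the Green identity would not cancel. Given that lemma, which is taken from the appendix, the remaining steps are routine.
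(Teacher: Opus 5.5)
Your proof is correct and follows essentially the same route as the paper: reduce to piecewise harmonic quadratics via Lemma~\ref{lem:local-harmonic-kernel-new}, upgrade the moment conditions to $[v_h]=0$ and $J_{n,e}(v_h)=0$ pointwise, and conclude $|v_h|_{1,h}=0$ from elementwise integration by parts before invoking continuity and connectedness. The only difference is that you spell out the easy inclusion $\mathbb R\subset\{v_h\in V_h(\mathcal M_h):\Delta_hv_h=0\}$ and the edge bookkeeping in Green's formula, which the paper leaves implicit.
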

\begin{proof}
	If $\Delta_hv_h=0$, Lemma~\ref{lem:local-harmonic-kernel-new} implies that $v_h|_K$ is a harmonic quadratic function on each triangle. On each edge, the quadratic function $[v_h]$ vanishes at both endpoints and has zero integral, so $[v_h]=0$. Furthermore, $J_{n,e}(v_h)\in P_1(e)$ and is orthogonal to $P_1(e)$, so $J_{n,e}(v_h)=0$. Hence, cell-wise integration by parts yields
	\[
	|v_h|_{1,h}^2=-(v_h,\Delta_hv_h)_h
	+\sum_{e\in\cE_h(\mathcal M_h)}(v_h,J_{n,e}(v_h))_e=0,
	\]
	where the edge values of $v_h$ are single-valued. Therefore, $v_h$ is constant on each triangle, and continuity and connectivity make it a global constant. The converse is obvious. Zero-mean constants are zero, thus proving positive definiteness. Uniform control over the broken $H^2$ norm will require additional assumptions from Section~\ref{sec:planar-surface-discrete-mt-new}.
\end{proof}

\subsection{Vertex residuals and rigid unfolding}
\label{sec:intrinsic-vertex-residual}

Let $z$ be an interior vertex of $\mathcal M_h$. According to the compatible orientations, enumerate the triangles in $\omega_z$ as $K_1,\ldots,K_m$, where $K_r$ and $K_{r+1}$ share the radial edge $e_r$, and $K_{m+1}=K_1$, so that the positive boundary orientation of $K_r$ points toward $z$ along $e_r$. Let $t_r$ be the corresponding unit tangent vector, and let $\eta_r^-=n_{K_r,e_r}$ and $\eta_r^+=-n_{K_{r+1},e_r}$. Let $\mathcal Q_r$ denote the rotation about $e_r$ that rigidly rotates the plane of $K_{r+1}$ into the plane of $K_r$. Then
\begin{equation}
	\mathcal Q_rt_r=t_r,
	\qquad
	\mathcal Q_r\eta_r^+=\eta_r^-.
	\label{eq:edge-rigid-rotation-new}
\end{equation}
For any piecewise function satisfying $v_h|_{K_r}\in V(K_r)$, let $g_r=\nabla_{K_r}v_h(z)$. Take the plane of $K_1$ as the unfolding plane $\mathcal P_z$, and define recursively
\begin{equation}
	U_1=I,
	\qquad
	U_{r+1}=U_r\mathcal Q_r\quad(1\leq r<m),
	\qquad
	\widetilde g_r=U_rg_r.
	\label{eq:accumulated-rotation-new}
\end{equation}
Thus, $U_r$ rigidly unfolds $K_r$ into $\mathcal P_z$. Let $\widetilde K_r=U_rK_r$ and transport $v_h|_{K_r}$ to $\widetilde K_r$ via $U_r$. Rigid invariance implies that its vertex gradient is $\widetilde g_r$. Transporting across the final radial edge completes a circuit and yields the holonomy rotation acting on the unfolding plane:
\begin{equation}
	\mathcal U_z:=U_m\mathcal Q_m:
	\mathcal P_z\longrightarrow\mathcal P_z.
	\label{eq:holonomy-rotation-new}
\end{equation}
Take $z$ as the common origin of all element planes. Since each $\mathcal Q_r$ preserves the compatible face orientations, $\mathcal U_z$ is a rotation in $\mathcal P_z$. Measuring angles in the orientation induced by $K_1$, we have
\begin{equation}
	\operatorname{ang}(\mathcal U_z)
	=\sum_{r=1}^m\theta_{r,z}-2\pi
	=-\kappa_z\pmod{2\pi}.
	\label{eq:holonomy-angle-defect-new}
\end{equation}
This sign convention is opposite to the forward transport convention in \cite[Proposition~2.5]{Christiansen2024Regge}. The two-stage construction of successive rigid unfolding in \eqref{eq:edge-rigid-rotation-new}--\eqref{eq:holonomy-angle-defect-new} is illustrated in Figure~\ref{fig:face-rotation-unfolding}.

\begin{figure}[!htbp]
	\centering
	\tikzalt{Two-stage rigid unfolding of three adjacent triangles. First, Q2
		rotates K3 about e2 into the plane of K2; then Q1 rotates the resulting pair
		about e1 into the plane of K1, giving the composite map U3 equals Q1 Q2.}
	\begin{tikzpicture}[scale=0.83,>=Latex,font=\small]
		\begin{scope}[xshift=-0.5cm]
			\coordinate (a) at (0,-1);
			\coordinate (b) at (-0.5,0.5);
			\coordinate (c)	at (1.5,1.4);
			\coordinate (d)	at (1,2.9);
			\coordinate (e)	at (0.1,2.7);
			
			\fill[orange!12] (0.9,-0.8) -- (2.9,0.1) -- (0.1,2.7) -- (-1.9,1.8) -- cycle;
			
			\draw[line width=1pt] (a) -- (b) -- (c) -- cycle;
			\node[font=\small] at (0.2,0.3) {$K_2$};
			
			\draw[line width=1pt, dashed] (d) -- (b) -- (c) -- cycle;
			\node[font=\small] at (1.8,2.2) {$\mathcal Q_2K_3$};
			
			\draw[line width=1pt] (e) -- (b) -- (c) -- cycle;
			\node[font=\small] at (-0.5,1.8) {$K_3$};
			
			\draw[very thick] (b) -- (c);
			\node[font=\small] at (0.5,1.2) {$e_2$};
			
			\draw[->, line width=1pt, red!70!black]
			(0.2,2.4) to[bend left=35] (1.05,2.55);
			\node[font=\small] at (0.6,2.9) {$\mathcal Q_2$};
			
			\node at (0.4,-2) {(a) Rotate $K_3$ onto $K_2$};
		\end{scope}
		
		\draw[<->,very thick] (2.2,0.8) -- (4.8,0.8)
		node[midway,above] {Common $K_2$};
		
		\begin{scope}[xshift=7.2cm]
			\coordinate (z)      at (0,0);
			\coordinate (p1)     at (-2.5,0);
			\coordinate (p2)     at (-0.5,1.5);
			\coordinate (p3orig) at (1.5,2.4);
			\coordinate (p3hat)  at (2,1.5); 
			
			\fill[blue!10] (-1,-1.6) -- (1.5,2.4) -- (1,3.9) -- (-1.5,-0.1) -- cycle;
			
			\draw[line width=1pt] (z) -- (p2) -- (p3orig) -- cycle;
			\node[font=\small] at (0.4,2.3) {$K_2$};
			
			\draw[line width=1pt] (z) -- (p1) -- (p2) -- cycle;
			\node[font=\small] at (-1,0.5) {$K_1$};
			
			\draw[line width=1pt, dashed] (z) -- (p2) -- (p3hat) -- cycle;
			\node[font=\small] at (1.7,0.3) {$\widetilde K_2=\mathcal Q_1K_2$};
			
			\draw[very thick] (z) -- (p2);
			\node[font=\small] at (-0.45,0.6) {$e_1$};
			
			\draw[->, line width=1pt, red!70!black]
			(0.6,1.8) to[bend left=35] (1.3,1.1);
			\node[font=\small] at (1.4,1.7) {$\mathcal Q_1$};
			
			\node[font=\small] at (-1.8,-0.35) {$\mathcal P_z$};
			
			\node at (-0.3,-2) {(b) Rotate $K_2$ onto $K_1$};
		\end{scope}
	\end{tikzpicture}
	\tikzaltend
	\caption{Construction of $U_3=\mathcal Q_1\mathcal Q_2$ via successive rigid unfolding.}
	\label{fig:face-rotation-unfolding}
\end{figure}

In what follows, $\det(a,b)$ denotes the two-dimensional determinant in the oriented plane containing $a$ and $b$; its value is invariant under orientation-preserving rigid rotations. Define
\begin{equation}
	A_z^{\rm flat}(v_h)
	:=\sum_{r=1}^m\|\Delta_{K_r}v_h\|_{0,K_r}^2,
	\qquad
	Q_z^{\rm flat}(v_h)
	:=\sum_{r=1}^{m-1}
	\det(\widetilde g_{r+1},\widetilde g_r)
	+\det(\widetilde g_1,\widetilde g_m).
	\label{eq:planar-fan-forms}
\end{equation}
For a closed unfolded fan, let $\widetilde g_{m+1}:=\widetilde g_1$ so that $Q_z^{\rm flat}$ becomes a cyclic sum of its endpoint contributions. Surface fans need not be closed: the actual contribution of the final edge is $\det(\mathcal U_z\widetilde g_1,\widetilde g_m)$ rather than $\det(\widetilde g_1,\widetilde g_m)$, and their difference is the holonomy correction term in Proposition~\ref{lem:planar-vertex-residual}.

\begin{remark}[Boundary patch unfolding]
	\label{rem:boundary-patch-unfolding}
	For a boundary vertex $z$, order the associated triangles $K_1,\ldots,K_m$ from one boundary edge to the other according to compatible orientations. Take the plane of $K_1$ as $\mathcal P_z$, and define $g_r$, $\mathcal Q_r$, $U_r$, and $\widetilde g_r$ along the $m-1$ internal radial edges exactly as above. No closure rotation is introduced.
\end{remark}

\begin{proposition}[Edge-to-vertex residual decomposition]
	\label{lem:planar-vertex-residual}
	For any piecewise planar triangulation $\mathcal M_h$ and $v_h\in V_h(\mathcal M_h)$, the residual decomposition reads
	\begin{equation}
		\|\Delta_hv_h\|_{0,h}^2
		-\|D_h^2v_h\|_{0,h}^2
		=\sum_{z\in\cV_{h,{\rm int}}(\mathcal M_h)}Q_z^{\mathcal M_h}(v_h)
		+\sum_{z\in\cV_{h,\partial}(\mathcal M_h)}
		Q_{z,\partial}^{\mathcal M_h}(v_h),
		\label{eq:piecewise-flat-residual-decomposition}
	\end{equation}
	where for each interior vertex $z$,
	\begin{equation}
		Q_z^{\mathcal M_h}(v_h)
		:=Q_z^{\rm flat}(v_h)
		+\det\bigl((\mathcal U_z-I)\widetilde g_1,\widetilde g_m\bigr),
		\label{eq:holonomy-decomposition}
	\end{equation}
	and for each boundary vertex $z$,
	\[
	Q_{z,\partial}^{\mathcal M_h}(v_h)
	:=\sum_{r=1}^{m-1}\det(\widetilde g_{r+1},\widetilde g_r)
	+\frac12
	\sum_{\substack{e\in\cE_{h,\partial}(\mathcal M_h)\\ z\in e}}
	\int_e\bigl(\partial_{tt}v_h\,\partial_nv_h
	-\partial_{tn}v_h\,\partial_tv_h\bigr)\dd s_h.
	\]
\end{proposition}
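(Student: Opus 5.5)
Our approach is elementwise integration by parts followed by redistribution of the edge terms to vertices. On each planar triangle $K$ and for smooth $v$, the classical identity
\[
\|\Delta_Kv\|_{0,K}^2-\|D_K^2v\|_{0,K}^2
=\int_{\partial K}\bigl(\partial_{tt}v\,\partial_nv-\partial_{tn}v\,\partial_tv\bigr)\dd s_h
\]
holds, with $t,n$ the counterclockwise tangent and outer conormal in the plane of $K$. It follows from $(\Delta v)^2-|D^2v|^2=2\det D^2v=\operatorname{div}(\,\cdot\,)$ of a null-Lagrangian field. Summing over $K$ writes the left side of \eqref{eq:piecewise-flat-residual-decomposition} as a sum of edge integrals. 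We split these into boundary edges, which are kept as they stand with weight $\tfrac12$ assigned to each endpoint, and interior edges, where the two one-sided contributions are paired.

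On an interior edge $e=K^+\cap K^-$ we integrate the term $\partial_{tn}v\,\partial_tv$ by parts along $e$, or equivalently write the edge integrand as a tangential derivative plus a remainder. The aim is to show that
\[
\int_e\bigl(\partial_{tt}v^\pm\partial_{n^\pm}v^\pm-\partial_{tn^\pm}v^\pm\partial_tv^\pm\bigr)
\]
summed over $\pm$ reduces to endpoint terms $\det(\nabla v^\pm,\cdot)$ evaluated at the two vertices. Here the global constraints in \eqref{eq:global-space-new} are used as follows. The tangential derivative $\partial_tv$ along $e$ agrees from both sides in the moments tested against $P_1(e)$, up to endpoint values, because the values agree at the vertices and $[v_h]\perp P_1(e)$. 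The conormal jump $J_{n,e}$ is orthogonal to $P_2(e)$. The local reduction constraints, the endpoint relation for $D_e^1$ and $D_e^{n,2,K}=0$, are what make the traces of the relevant polynomials fall into the tested spaces. After these cancellations the interior contribution should become a combination of $\tfrac12[\partial_tv\,\partial_nv]$-type products at the endpoints. Written intrinsically in the plane of each triangle, this is $\det(g^-,g^+)$-type data at $z_0$ and $z_1$. This edge reduction is the main obstacle. The integrand is only polynomial of degree up to about ten, so the orthogonality must be exploited in its bilinear structure. I would first write $\partial_{tt}v\,\partial_nv-\partial_{tn}v\,\partial_tv=\partial_t(\partial_tv\,\partial_nv)-2\partial_{tn}v\,\partial_tv$ and use the $P_1$/$P_2$ orthogonality on the second piece. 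If this fails, I would verify the identity on a reference edge by a direct basis computation.

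Next we collect the endpoint terms vertex by vertex. At an interior vertex $z$, the radial edge $e_r$ contributes a bilinear term in $g_r$ and $g_{r+1}$ expressed with the in-plane frames $(t_r,\eta_r^-)$ and $(t_r,\eta_r^+)$. Applying $U_r$ and using \eqref{eq:edge-rigid-rotation-new}, together with the rotation invariance of $\det$, turns the contribution of $e_r$ for $r<m$ into $\det(\widetilde g_{r+1},\widetilde g_r)$ in $\mathcal P_z$. For the last edge $e_m$, the neighbour $K_{m+1}=K_1$ is unfolded by $U_m\mathcal Q_m=\mathcal U_z$ rather than by $U_1=I$, so its term equals $\det(\mathcal U_z\widetilde g_1,\widetilde g_m)$. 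We then split this as $\det(\widetilde g_1,\widetilde g_m)+\det((\mathcal U_z-I)\widetilde g_1,\widetilde g_m)$, which gives \eqref{eq:holonomy-decomposition}. At boundary vertices the same argument runs along the $m-1$ internal radial edges, as in Remark~\ref{rem:boundary-patch-unfolding}, with no closure term; the two boundary edges contribute the half-weighted boundary integrals. Summing over all vertices yields \eqref{eq:piecewise-flat-residual-decomposition}.
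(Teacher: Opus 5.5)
\textbf{There is a genuine gap: the interior-edge reduction, which you flag as the main obstacle, is not carried out, and the tool you propose for it is insufficient.}

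Your overall structure coincides with the paper's. It consists of the elementwise identity, the pairing of one-sided integrals on interior edges, and half-weighting of boundary edges. It also includes the vertex collection, in which $U_r$ turns the $e_r$ contribution into $\det(\widetilde g_{r+1},\widetilde g_r)$ and the closing edge into $\det(\mathcal U_z\widetilde g_1,\widetilde g_m)$. That part is fine. The problem is the edge step: the $P_1(e)$ and $P_2(e)$ orthogonalities alone do not suffice.

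Applying $[ab]=[a]\{b\}+\{a\}[b]$ to the paired integrand and integrating by parts along $e$ (using $[v_h](z_0)=[v_h](z_1)=0$) gives exactly
\[
2\int_eJ_{n,e}(v_h)\{\partial_{tt}v_h\}\dd s_h
+2\int_e[v_h]\,\partial_{tt}A_{n,e}(v_h)\dd s_h
+\bigl([\partial_tv_h]A_{n,e}(v_h)-J_{n,e}(v_h)\{\partial_tv_h\}\bigr)\Big|_{z_0}^{z_1}.
\]
Your splitting $\partial_t(\partial_tv\,\partial_nv)-2\partial_{tn}v\,\partial_tv$ leads to the same expression.

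The first integral vanishes because the bubbles $b_i$ vanish on $e$. Hence the value traces lie in $P_4(e)$ and $\{\partial_{tt}v_h\}\in P_2(e)$, so $J_{n,e}\perp P_2(e)$ applies. It is this fact, not the reduction constraints, that places the trace in the tested space.

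The second integral is not killed by orthogonality. On $e=\{\lambda_k=0\}$, the bubble that does not contain $\lambda_k^2$ has a degree-five conormal trace proportional to $\psi(t)=t^2(1-t)^2(1-2t)$. So $\partial_{tt}A_{n,e}(v_h)$ is cubic, while $[v_h]$ is orthogonal only to $P_1(e)$.

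The missing idea is a structural parity cancellation:
\begin{itemize}
\item $[v_h]|_e$ is a quartic vanishing at both endpoints and orthogonal to $P_1(e)$. That space is one-dimensional, so $[v_h]|_e=c\,\chi$ with $\chi(t)=t(1-t)(5t^2-5t+1)$.
\item $\chi$ is symmetric under $t\mapsto1-t$, whereas $\psi''$ is antisymmetric. Hence $\int_0^1\chi\,\psi''\dd t=0$.
\item The $P_4$ part of $A_{n,e}$ contributes only a $P_1(e)$ function after two tangential derivatives, so it is killed by $[v_h]\perp P_1(e)$.
\end{itemize}
Without this parity argument, or an explicit computation that amounts to it, the edge integrals are not shown to vanish and the decomposition does not close.

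Note also that the surviving endpoint term is not of $[\partial_tv\,\partial_nv]$ type. Take both conormal derivatives along $n_e^+$ in the aligned plane. Then
\[
[\partial_tv_h]A_{n,e}-J_{n,e}\{\partial_tv_h\}
=\partial_tv_h^+\,\partial_{n_e^+}v_h^--\partial_{n_e^+}v_h^+\,\partial_tv_h^-,
\]
a cross-determinant of the two one-sided gradients. This is what becomes $\det(\mathcal Q_rg_{r+1},g_r)$ at $z$.
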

\begin{proof}
	On each edge $e$ of $\partial K$, let $t$ be the induced unit tangent vector and $n=n_{K,e}$ the outer conormal vector. On each planar triangle, the identity
	\[
	(\Delta v)^2-|D^2v|^2
	=\operatorname{div}(\Delta v\nabla v-D^2v\nabla v)
	\]
	gives
	\begin{equation}
		\|\Delta v\|_{0,K}^2-\|D^2v\|_{0,K}^2
		=\int_{\partial K}
		\bigl(\partial_{tt}v\,\partial_nv
		-\partial_{tn}v\,\partial_tv\bigr)\dd s_h.
		\label{eq:planar-element-MT}
	\end{equation}
	For an interior edge $e=K^+\cap K^-$, rigidly align the two element planes around $e$ and use the common edge orientation. The jump product identity and \eqref{eq:outward-normal-jump-new}, followed by integration by parts along the edge and the vanishing of $[v_h]$ at the endpoints, yield the edge contribution
	\[
	\begin{aligned}
		2\int_eJ_{n,e}(v_h)\{\partial_{tt}v_h\}\dd s_h
		+2\int_e[v_h]\partial_{tt}A_{n,e}(v_h)\dd s_h
		+\left([\partial_tv_h]A_{n,e}(v_h)
		-J_{n,e}(v_h)\{\partial_tv_h\}\right)\Big|_{z_0}^{z_1}.
	\end{aligned}
	\]
	The enriched bubble functions vanish on $e$, so both value traces belong to $P_4(e)$ and $\{\partial_{tt}v_h\}\in P_2(e)$. Consequently, the first integral term vanishes because $J_{n,e}(v_h)\perp P_2(e)$, leaving
	\begin{equation}
		2\int_e[v_h]\partial_{tt}A_{n,e}(v_h)\dd s_h
		+\left(
		[\partial_tv_h]A_{n,e}(v_h)
		-J_{n,e}(v_h)\{\partial_tv_h\}
		\right)\Big|_{z_0}^{z_1}.
		\label{eq:planar-interior-edge-endpoints}
	\end{equation}
	Appendix~\ref{app:edge-cancellation} proves that the integrals in \eqref{eq:planar-interior-edge-endpoints} vanish for every $v_h\in V_h(\mathcal M_h)$. For the endpoint terms, rigidly transport the gradient on $K_{r+1}$ into the plane of $K_r$. Evaluation in the common edge coordinate system $(t_r,\eta_r^-)$, with each jump equal to the $K_r$ trace minus the transported $K_{r+1}$ trace, shows that the contribution of $e_r$ at $z$ is
	\begin{equation}
		\Bigl(
		[\partial_tv_h]\{\partial_\eta v_h\}
		-[\partial_\eta v_h]\{\partial_tv_h\}
		\Bigr)\Big|_z
		=\det(\mathcal Q_rg_{r+1},g_r).
		\label{eq:endpoint-determinant-new}
	\end{equation}
	Summing \eqref{eq:endpoint-determinant-new} over $r$ (setting $g_{m+1}:=g_1$), transporting the determinants to $\mathcal P_z$, and combining with \eqref{eq:planar-fan-forms} and definition \eqref{eq:holonomy-decomposition} yields
	\begin{equation}
		\sum_{r=1}^m\det(\mathcal Q_rg_{r+1},g_r)
		=
		\sum_{r=1}^{m-1}
		\det(\widetilde g_{r+1},\widetilde g_r)
		+\det(\mathcal U_z\widetilde g_1,\widetilde g_m)
		=Q_z^{\mathcal M_h}(v_h).
		\label{eq:surface-cycle-residual}
	\end{equation}
	
	For a boundary vertex $z$, the same unfolding along its $m-1$ internal radial edges gives the sum of determinants in the definition of $Q_{z,\partial}^{\mathcal M_h}(v_h)$, while the associated boundary edges give the integral terms. Summing over all vertex patches proves \eqref{eq:piecewise-flat-residual-decomposition}.
\end{proof}

\begin{proposition}[Vertex patch residual bound]
	\label{lem:surface-patch-residual-new}
	Let $v_h\in V_h(\mathcal M_h)$. At each interior vertex $z$,
	\begin{equation}
		|Q_z^{\mathcal M_h}(v_h)|
		\lesssim
		\sum_{K\subset\omega_z}\|\Delta_Kv_h\|_{0,K}^2
		+\frac{|\kappa_z|}{h_z^2}
		\sum_{K\subset\omega_z}\|\nabla_Kv_h\|_{0,K}^2.
		\label{eq:intrinsic-local-residual-bound-new}
	\end{equation}
	If $\mathcal M_h=\Omega\subset\R^2$, $v_h\in V_{h,0}$, and $z\in\cV_{h,\partial}(\Omega)$, then
	\begin{equation}
		|Q_{z,\partial}^{\Omega}(v_h)|
		\lesssim\sum_{K\subset\omega_z}\|\Delta_Kv_h\|_{0,K}^2.
		\label{eq:flat-boundary-patch-bound}
	\end{equation}
\end{proposition}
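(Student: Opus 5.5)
The bound is proved by treating the two pieces of $Q_z^{\mathcal M_h}(v_h)=Q_z^{\rm flat}(v_h)+\det((\mathcal U_z-I)\widetilde g_1,\widetilde g_m)$ from Proposition~\ref{lem:planar-vertex-residual} separately. The holonomy term is the easy one. $\mathcal U_z$ is a planar rotation by $-\kappa_z$, so $|\mathcal U_z-I|=2|\sin(\kappa_z/2)|\le|\kappa_z|$ and
\[
|\det((\mathcal U_z-I)\widetilde g_1,\widetilde g_m)|\le|\kappa_z|\,|g_1|\,|g_m|.
\]
Each $v_h|_{K}$ lies in the finite-dimensional space $V(K)$, so an inverse estimate under shape regularity gives $|\nabla_Kv_h(z)|^2\lesssim h_K^{-2}\|\nabla_Kv_h\|_{0,K}^2$. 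Combined with quasi-uniformity $h_K\simeq h_z$, this yields the second term in \eqref{eq:intrinsic-local-residual-bound-new}.

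For the flat term, all transported restrictions are placed on the unfolded fan $\widetilde K_1,\dots,\widetilde K_m$ in $\mathcal P_z$. This is an open planar fan with total angle $2\pi-\kappa_z$, possibly overlapping, but only local planar geometry is used. I will show that $Q_z^{\rm flat}$ is a quadratic form on the finite-dimensional space $W_z$ of patch functions. Here $W_z$ consists of tuples $(w_r)$ with $w_r\in V(\widetilde K_r)$ that satisfy the interface conditions of \eqref{eq:global-space-new} across the $m-1$ internal radial edges, together with the last-edge conditions transported through $\mathcal U_z$. The aim is to show that $Q_z^{\rm flat}$ vanishes on the kernel of $A_z^{\rm flat}$ restricted to $W_z$. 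Once that holds, a finite-dimensional argument gives $|Q_z^{\rm flat}|\lesssim A_z^{\rm flat}$. Scaling the fan to unit size is allowed because $Q_z^{\rm flat}$ and $A_z^{\rm flat}$ are both homogeneous of degree $0$ in $h$ after scaling; each is a product of two first derivatives, respectively an $L^2$ norm of second derivatives, in two dimensions. Compactness of the shape-regular configurations with bounded valence then makes the constant uniform. A technical point is that $W_z$ depends on the configuration, including $\kappa_z$. I will avoid this by defining the kernel condition through the open-fan constraints only, namely the $m-1$ internal edges. The cyclic sum $Q_z^{\rm flat}$ uses $\det(\widetilde g_1,\widetilde g_m)$, and this term will be handled directly in the kernel characterization.

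The kernel characterization is the main obstacle. If $A_z^{\rm flat}(v_h)=0$, Lemma~\ref{lem:local-harmonic-kernel-new} makes every $w_r$ a harmonic quadratic. The argument of Proposition~\ref{prop:global-harmonic-kernel} gives $[w]=0$ and $J_n(w)=0$ across each internal radial edge, because everything is coplanar after unfolding. Two harmonic quadratics that agree in value and normal derivative along a line differ by $c\,\ell^2$, where $\ell$ is the line's distance function; since $\ell^2$ is not harmonic, $c=0$. Hence all $w_r$ coincide with a single harmonic quadratic $p$ on the open fan, so $\widetilde g_r=\nabla p(z)$ for every $r$. Then every determinant $\det(\widetilde g_{r+1},\widetilde g_r)$ vanishes, including $\det(\widetilde g_1,\widetilde g_m)$, and so $Q_z^{\rm flat}=0$ on the kernel, as required. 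I will have to check that the kernel statement $\ker A\subset\ker Q$ gives the bound: $Q_z^{\rm flat}$ is a quadratic form on the quotient by $\ker A$, where $A$ is a norm, so $|Q|\le C A$, with $C$ depending continuously on the scaled geometry.

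For the planar boundary estimate \eqref{eq:flat-boundary-patch-bound}, I will use the same approach on the boundary fan with the homogeneous data of $V_{h,0}$ on the two boundary edges. In the kernel, $p$ is a single harmonic quadratic on the fan whose value and conormal moments vanish on the boundary edges. That forces $p=0$ when the boundary edges are non-collinear. When they are collinear, $p=c\,\ell^2$, which is not harmonic unless $c=0$. So $p=0$, and the determinant sum and the boundary edge integrals in $Q_{z,\partial}^{\Omega}$ both vanish. These integrals are quadratic in $v_h$ on the patch and scale-invariant, so the same compactness argument gives the bound.
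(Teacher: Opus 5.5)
Your outline matches the paper's, and the holonomy bound and the planar boundary estimate are fine. The gap is in the interior flat step.

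Your outline has the same components as the paper's proof:
\begin{itemize}
\item split off $\det((\mathcal U_z-I)\widetilde g_1,\widetilde g_m)$ and bound it by $\|\mathcal U_z-I\|\le|\kappa_z|$ plus a vertex inverse estimate;
\item work on the open-fan space, with constraints only across the $m-1$ internal radial edges;
\item identify $N:=\ker A_z^{\rm flat}$ as restrictions of one harmonic quadratic on the unfolded fan;
\item conclude by compactness.
\end{itemize}
In the boundary case the kernel is trivial, so $A$ is a norm on the patch space and bounds any quadratic form.

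\textbf{The gap.} You only verify $Q_z^{\rm flat}(p)=0$ for $p\in N$, because all $\widetilde g_r$ equal $c:=\nabla p(z)$. That does not make $Q_z^{\rm flat}$ well defined on the quotient by $N$. For that you need $N$ to lie in the radical of the associated symmetric form: $B_z^{\rm flat}(x,p)=0$ for every $x$ in the open-fan space and every $p\in N$. Without it, take $x=p+\epsilon y$. Then $Q(x)=2\epsilon B(p,y)+\epsilon^2Q(y)$ while $A(x)=\epsilon^2A(y)$, so $|Q|/A$ blows up like $\epsilon^{-1}$ (compare $Q=x_1x_2$, $A=x_1^2$).

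Your test cannot tell the correct form from a wrong one. The open sum $\sum_{r=1}^{m-1}\det(\widetilde g_{r+1},\widetilde g_r)$ also vanishes on $N$. Yet its polarization against $p$ is $\tfrac12\det(\widetilde g_m-\widetilde g_1,c)$, and nothing in the open fan forces this to vanish, since $K_1$ and $K_m$ are uncoupled. So your claim that the closure term $\det(\widetilde g_1,\widetilde g_m)$ is ``handled in the kernel characterization'' is not borne out; your computation never uses it.

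\textbf{The missing ingredient} is the cyclic telescoping the paper uses. Let $\widetilde g_r$ be the vertex gradients of $x$ and set $\widetilde g_{m+1}:=\widetilde g_1$. Then
\[
2B_z^{\rm flat}(x,p)
=\sum_{r=1}^m\bigl(\det(\widetilde g_{r+1},c)+\det(c,\widetilde g_r)\bigr)
=\det\Bigl(\sum_{r=1}^m\widetilde g_{r+1}-\sum_{r=1}^m\widetilde g_r,\;c\Bigr)=0 .
\]
This is exactly why the algebraic closure $\det(\widetilde g_1,\widetilde g_m)$ stays in $Q_z^{\rm flat}$, while the geometric discrepancy goes into the holonomy term.

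With this identity, let $w$ be the component of $x$ complementary to $N$. Then $A_z^{\rm flat}(x)=A_z^{\rm flat}(w)$ and $Q_z^{\rm flat}(x)=Q_z^{\rm flat}(w)$. Uniform coercivity of $A_z^{\rm flat}$ on that complement gives \eqref{eq:flat-open-fan-bound}.

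For the uniformity, your claim that the constant depends continuously on the scaled geometry tacitly needs $\dim N=5$ for every admissible configuration. Your kernel characterization does supply this, so the kernel projection varies continuously. The paper makes this explicit with the compactness--contradiction argument of Appendix~\ref{app:uniform-open-fan}.
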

\begin{proof}
	We first estimate the flat part of \eqref{eq:holonomy-decomposition} on the enlarged space. Let $X_z$ consist of tuples $(v_r)_{r=1}^m\in\prod_{r=1}^mV(\widetilde K_r)$ satisfying the compatibility of vertex values and edge moments across the $m-1$ consecutive radial interfaces. No compatibility is imposed between the two boundary edges of this open fan. Even if their planar images overlap (which can occur when $\kappa_z<0$), the triangles remain as labeled sectors. The rigid unfolding restriction of $v_h$ to $\omega_z$ belongs to $X_z$. Both forms in \eqref{eq:planar-fan-forms} are defined on $X_z$, with the cyclic closure term in $Q_z^{\rm flat}$ understood algebraically.
	
	If $q\in X_z$ and $A_z^{\rm flat}(q)=0$, Lemma~\ref{lem:local-harmonic-kernel-new} implies that each component is a harmonic quadratic polynomial. The conditions on vertex values, function moments, and conormal moments on the $m-1$ internal radial edges imply that $[q]=[\partial_nq]=0$, so these components are restrictions of a single harmonic quadratic polynomial on the unfolded open fan:
	\begin{equation}
		\ker(A_z^{\rm flat}|_{X_z})
		=\left\{(p|_{\widetilde K_r})_{r=1}^m:
		p\in P_2(\mathcal P_z),\ \Delta_{\mathcal P_z}p=0\right\}.
		\label{eq:flat-open-fan-kernel}
	\end{equation}
	For the symmetric bilinear form $B_z^{\rm flat}$ associated with $Q_z^{\rm flat}$, every kernel polynomial $p$ has the same vertex gradient on each sector. The cyclic determinant sum in \eqref{eq:planar-fan-forms} therefore gives $B_z^{\rm flat}(v_h,p)=0$, so $B_z^{\rm flat}$ descends to the quotient by the kernel in \eqref{eq:flat-open-fan-kernel}. The norm equivalence established in Appendix~\ref{app:uniform-open-fan} is uniform over normalized admissible patches with uniformly bounded valence. Its proof uses compactness of the normalized patch geometries for each fixed valence and continuity of the harmonic-kernel projection. Hence,
	\begin{equation}
		|Q_z^{\rm flat}(v_h)|
		\lesssim A_z^{\rm flat}(v_h)
		=\sum_{K\subset\omega_z}\|\Delta_Kv_h\|_{0,K}^2.
		\label{eq:flat-open-fan-bound}
	\end{equation}
	
	The holonomy is a planar rotation through $-\kappa_z$, so
	\begin{equation}
		\|\mathcal U_z-I\|
		=2\left|\sin\frac{\kappa_z}{2}\right|\le|\kappa_z|.
		\label{eq:holonomy-rotation-norm-new}
	\end{equation}
	A vertex inverse estimate then bounds the correction term in \eqref{eq:holonomy-decomposition} as
	\[
	\left|\det((\mathcal U_z-I)\widetilde g_1,\widetilde g_m)\right|
	\lesssim \frac{|\kappa_z|}{h_z^2}
	\sum_{K\subset\omega_z}\|\nabla_Kv_h\|_{0,K}^2.
	\]
	Combining with \eqref{eq:flat-open-fan-bound}, this proves \eqref{eq:intrinsic-local-residual-bound-new}.
	
	In the planar homogeneous boundary case, the vanishing of the piecewise Laplacian energy similarly yields a single harmonic quadratic function $q$. The boundary degrees of freedom force $q|_e=0$ and $\partial_nq|_e=0$ on each boundary edge, and hence $q\equiv0$. Uniform norm equivalence on the normalized boundary patch spaces (as in Appendix~\ref{app:uniform-open-fan}) then proves \eqref{eq:flat-boundary-patch-bound}.
\end{proof}

\subsection{Discrete Miranda--Talenti-type estimates}
\label{sec:planar-surface-discrete-mt-new}

\begin{theorem}[Discrete Miranda--Talenti-type estimate on planar meshes]
	\label{thm:planar-discrete-MT}
	For every $v_h\in V_{h,0}$,
	\begin{equation}
		\|D_h^2v_h\|_{0,h}
		\lesssim\|\Delta_hv_h\|_{0,h},
		\qquad
		\|\Delta_hv_h\|_{0,h}
		\le\sqrt2\|D_h^2v_h\|_{0,h}.
		\label{eq:planar-discrete-MT}
	\end{equation}
\end{theorem}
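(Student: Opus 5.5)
The plan is to treat the two inequalities in \eqref{eq:planar-discrete-MT} separately. The second one is purely pointwise. On each triangle $K$ we have $(\Delta_K v)^2=(\partial_{11}v+\partial_{22}v)^2\le 2(\partial_{11}v^2+\partial_{22}v^2)\le 2|D_K^2v|^2$ by Cauchy--Schwarz. Integrating over $K$ and summing over $\cT_h(\Omega)$ gives $\|\Delta_hv_h\|_{0,h}\le\sqrt2\|D_h^2v_h\|_{0,h}$ for any piecewise $H^2$ function. No property of $V_{h,0}$ is needed for this direction.

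For the first inequality, I will invoke Proposition~\ref{lem:planar-vertex-residual} with $\mathcal M_h=\Omega$. This gives
\[
\|D_h^2v_h\|_{0,h}^2=\|\Delta_hv_h\|_{0,h}^2-\sum_{z\in\cV_{h,{\rm int}}(\Omega)}Q_z^{\Omega}(v_h)-\sum_{z\in\cV_{h,\partial}(\Omega)}Q_{z,\partial}^{\Omega}(v_h).
\]
In the planar case every interior fan is already flat. All $\mathcal Q_r=I$, and since the angles around an interior vertex sum to $2\pi$, we get $\kappa_z=0$ and $\mathcal U_z=I$. The holonomy correction in \eqref{eq:holonomy-decomposition} therefore vanishes, and $Q_z^{\Omega}=Q_z^{\rm flat}$. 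Proposition~\ref{lem:surface-patch-residual-new}, with $\kappa_z=0$ in \eqref{eq:intrinsic-local-residual-bound-new}, gives $|Q_z^\Omega(v_h)|\lesssim\sum_{K\subset\omega_z}\|\Delta_Kv_h\|_{0,K}^2$ at each interior vertex. At each boundary vertex, \eqref{eq:flat-boundary-patch-bound} gives the same bound, using $v_h\in V_{h,0}$. Summing over all vertices and using the bounded valence (each triangle lies in exactly three vertex patches) yields
\[
\|D_h^2v_h\|_{0,h}^2\le\|\Delta_hv_h\|_{0,h}^2+C\sum_z\sum_{K\subset\omega_z}\|\Delta_Kv_h\|_{0,K}^2\lesssim\|\Delta_hv_h\|_{0,h}^2 .
\]
Taking square roots finishes the proof.

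The only delicate points are bookkeeping. First, I must check that the boundary residual $Q_{z,\partial}^\Omega$, which contains the boundary-edge integrals, is attributed consistently to each boundary vertex, so that each boundary edge is counted once in total. Second, the constant must depend only on shape regularity and the valence bound; this is exactly what the uniform open-fan norm equivalence of Appendix~\ref{app:uniform-open-fan} provides. The real analytical obstacle, namely the uniform kernel and quotient argument, is already contained in Proposition~\ref{lem:surface-patch-residual-new}, so the theorem follows by assembly.
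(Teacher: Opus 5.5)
Your proposal is correct and follows essentially the same route as the paper: apply the residual decomposition of Proposition~\ref{lem:planar-vertex-residual} with $\mathcal U_z=I$ and $\kappa_z=0$, bound every interior and boundary vertex residual by the local Laplacian energy via Proposition~\ref{lem:surface-patch-residual-new} (using $v_h\in V_{h,0}$ for \eqref{eq:flat-boundary-patch-bound}), sum with finite patch overlap, and obtain the reverse inequality from the pointwise bound $|\Delta v|\le\sqrt2|D^2v|$. The boundary-edge bookkeeping you flag is already handled by the factor $\tfrac12$ in the definition of $Q_{z,\partial}^{\mathcal M_h}$, which splits each boundary-edge integral between its two endpoints.
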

\begin{proof}
	On planar meshes, $\mathcal U_z=I$ and $\kappa_z=0$. Propositions \ref{lem:planar-vertex-residual}--\ref{lem:surface-patch-residual-new} (including \eqref{eq:flat-boundary-patch-bound}) and finite patch overlap yield $\left|
	\|\Delta_hv_h\|_{0,h}^2-\|D_h^2v_h\|_{0,h}^2
	\right|
	\lesssim\|\Delta_hv_h\|_{0,h}^2$. Rearranging and using the pointwise inequality $|\Delta v|\le\sqrt2|D^2v|$ proves \eqref{eq:planar-discrete-MT}.
\end{proof}

For a uniformly shape-regular and quasi-uniform family $\{\mathcal M_h\}_{h\to0}$ of closed piecewise planar surface meshes with uniformly bounded vertex valences, assume that there exists a constant $C_{\rm PF}$ independent of $h$ such that every zero-mean $v\in H_h^1(\mathcal M_h)$ satisfies
\begin{equation}
	\|v\|_{0,h}
	\leq C_{\rm PF}\left(
	|v|_{1,h}+
	\left(
	\sum_{e\in\cE_h(\mathcal M_h)}h_e^{-1}\|[v]\|_{0,e}^2
	\right)^{1/2}
	\right).
	\label{eq:abstract-broken-poincare}
\end{equation}
Assume further that the discrete curvature scale of the mesh family is uniformly bounded:
\begin{equation}
	\Lambda_\kappa^*
	:=\sup_h\max_{z\in\cV_h(\mathcal M_h)}
	\frac{|\kappa_z|}{h_z^2}<\infty.
	\label{eq:abstract-angle-defect-bound}
\end{equation}

\begin{theorem}[Discrete Miranda--Talenti-type estimate on closed piecewise planar surfaces]
	\label{thm:discrete-miranda-talenti-new}
	Let the family $\{\mathcal M_h\}_{h\to0}$ satisfy \eqref{eq:abstract-angle-defect-bound} and \eqref{eq:abstract-broken-poincare}. Then there exist constants $h_0>0$ and $C_{\rm MT}>0$, independent of $h$ and depending possibly on $C_{\rm PF}$, $\Lambda_\kappa^*$, the mesh shape-regularity and quasi-uniformity constants, and the upper bound on the vertex valences, such that every $v_h\in V_h(\mathcal M_h)$ satisfies the following inequalities for $0<h\leq h_0$:
	\begin{equation}
		\|D_h^2v_h\|_{0,h}
		\leq C_{\rm MT}\|\Delta_hv_h\|_{0,h},
		\qquad
		\|\Delta_hv_h\|_{0,h}
		\leq\sqrt2\|D_h^2v_h\|_{0,h}.
		\label{eq:laplace-hessian-equivalence-new}
	\end{equation}
	Furthermore, every $v_h\in\mathring V_h(\mathcal M_h)$ satisfies
	\begin{equation}
		\|v_h\|_{2,h}
		\leq C_{\rm MT}\|\Delta_hv_h\|_{0,h}.
		\label{eq:discrete-MT-new}
	\end{equation}
\end{theorem}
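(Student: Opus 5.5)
Since $\mathcal M_h$ is closed, there are no boundary vertices. The plan is to combine Propositions~\ref{lem:planar-vertex-residual} and \ref{lem:surface-patch-residual-new} with \eqref{eq:abstract-angle-defect-bound} and finite patch overlap. This gives
\[
\|D_h^2v_h\|_{0,h}^2\le \|\Delta_hv_h\|_{0,h}^2+\sum_z|Q_z^{\mathcal M_h}(v_h)|
\le C_1\|\Delta_hv_h\|_{0,h}^2+C_1\Lambda_\kappa^*|v_h|_{1,h}^2 .
\]
The second inequality in \eqref{eq:laplace-hessian-equivalence-new} is just the pointwise bound $|\Delta v|\le\sqrt2|D^2v|$. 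Both sides of the first inequality are invariant under adding constants, so it suffices to treat $v_h\in\mathring V_h(\mathcal M_h)$. The whole task is therefore to bound the lower-order term $|v_h|_{1,h}$ by $\|\Delta_hv_h\|_{0,h}$ plus a term of the form $h|v_h|_{2,h}$ that can be absorbed for small $h$.

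\textbf{Broken Poincar\'e step.} I would apply \eqref{eq:abstract-broken-poincare}, which requires controlling the jump term. On each interior edge, $[v_h]|_e\in P_4(e)$ (the bubbles vanish on edges), vanishes at both endpoints, and is orthogonal to $P_0(e)$ by the value-moment conditions. Hence $[v_h]$ vanishes on $P_1(e)$-interpolants, and a Bramble--Hilbert/trace argument on $\omega_e$ gives
\[
h_e^{-1/2}\|[v_h]\|_{0,e}\lesssim h\,|v_h|_{2,\omega_e}.
\]
On a folded patch I would do this after rigid unfolding of $\omega_e$, which preserves traces. The result is
\[
\|v_h\|_{0,h}\le C_{\rm PF}\bigl(|v_h|_{1,h}+Ch|v_h|_{2,h}\bigr).
\]

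\textbf{Gradient step.} Elementwise integration by parts gives
\[
|v_h|_{1,h}^2=-(v_h,\Delta_hv_h)_h+\sum_{e}\int_e\bigl(v_h^+\partial_{n_e^+}v_h^++v_h^-\partial_{n_e^-}v_h^-\bigr)\dd s_h ,
\]
and each edge integrand equals $\{v_h\}J_{n,e}(v_h)+[v_h]A_{n,e}(v_h)$. For the first piece, $J_{n,e}\perp P_2(e)$ lets me replace $\{v_h\}$ by $\{v_h\}-\Pi_{P_2(e)}\{v_h\}$. For the second, $[v_h]\perp P_0(e)$ lets me replace $A_{n,e}$ by $A_{n,e}-\Pi_{P_0}A_{n,e}$. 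Trace, approximation and inverse estimates on the unfolded patch $\omega_e$ then bound the edge sum by $C h|v_h|_{2,h}^2$, or alternatively by $Ch|v_h|_{2,h}|v_h|_{1,h}$. Combining with the Poincar\'e step and Young's inequality yields
\[
|v_h|_{1,h}^2+\|v_h\|_{0,h}^2\lesssim \|\Delta_hv_h\|_{0,h}^2+h\,|v_h|_{2,h}^2 ,
\]
with constants depending on $C_{\rm PF}$.

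\textbf{Absorption.} Inserting this into the first display gives
\[
|v_h|_{2,h}^2\le C\|\Delta_hv_h\|_{0,h}^2+C\Lambda_\kappa^*\,h\,|v_h|_{2,h}^2 .
\]
Choosing $h_0$ with $C\Lambda_\kappa^*h_0\le1/2$ and absorbing gives the first inequality in \eqref{eq:laplace-hessian-equivalence-new}. Feeding this back into the $H^1$ and $L^2$ bounds gives \eqref{eq:discrete-MT-new}.

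I expect the main obstacle to be the edge-consistency bounds on folded edges. There the conormals $n_e^\pm$ are not opposite, and one must verify that the orthogonality conditions in \eqref{eq:global-space-new} still produce a factor of $h$ after unfolding, with constants independent of the dihedral angle. The key point to check is that each edge term only involves the traces $\partial_{n_e^\pm}v_h^\pm$ in each element's own plane, so the estimates are genuinely two-dimensional on each element.
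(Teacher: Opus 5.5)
Your proposal is correct and follows essentially the same route as the paper's proof: the residual decomposition plus the patch and angle-defect bounds give a Hessian prebound; edge jump bounds feed the broken Poincar\'e inequality; elementwise integration by parts handles $|v_h|_{1,h}^2$, with edge terms controlled by orthogonality; and Young's inequality with absorption for small $h$ finishes the argument. The only difference is quantitative. The paper uses the full orthogonalities $J_{n,e}(v_h)\perp P_2(e)$ and $[v_h]\perp P_1(e)$ to make the edge term $O(h^2)\|D_h^2v_h\|_{0,h}^2$ and so takes $Ch_0^2\Lambda_\kappa^*\le\frac14$, whereas your weaker $O(h)$ bound, with $h_0$ chosen so that $C\Lambda_\kappa^*h_0\le\frac12$, works equally well.
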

\begin{proof}
	Subtracting a constant neither changes the gradient, the Hessian, the Laplacian, nor the edge jumps; hence we can take $v_h\in\mathring V_h(\mathcal M_h)$ when proving \eqref{eq:laplace-hessian-equivalence-new}. Since $\mathcal M_h$ is closed, the boundary sum in \eqref{eq:piecewise-flat-residual-decomposition} vanishes, and \eqref{eq:abstract-angle-defect-bound}, \eqref{eq:intrinsic-local-residual-bound-new}, and finite patch overlap yield
	\begin{equation}
		\|D_h^2v_h\|_{0,h}^2
		\lesssim
		\|\Delta_hv_h\|_{0,h}^2+
		\Lambda_\kappa^*|v_h|_{1,h}^2.
		\label{eq:hessian-prebound-new}
	\end{equation}
	Vertex single-valuedness, edge-moment continuity, the one-dimensional Poincaré inequality, and the Bramble--Hilbert lemma give
	\begin{align}
		\|[v_h]\|_{0,e}
		&\lesssim h_e^{3/2}\|D_h^2v_h\|_{0,\omega_e},
		\label{eq:value-jump-basic-new}\\
		\|J_{n,e}(v_h)\|_{0,e}
		&\lesssim h_e^{1/2}\|D_h^2v_h\|_{0,\omega_e}.
		\label{eq:normal-jump-basic-new}
	\end{align}
	
	Combining \eqref{eq:value-jump-basic-new} with \eqref{eq:abstract-broken-poincare} and finite patch overlap implies that
	\begin{equation}
		\|v_h\|_{0,h}
		\lesssim C_{\rm PF}
		\bigl(|v_h|_{1,h}+h\|D_h^2v_h\|_{0,h}\bigr).
		\label{eq:broken-poincare-new}
	\end{equation}
	
	Cell-wise integration by parts yields
	\begin{equation}
		|v_h|_{1,h}^2
		=-(v_h,\Delta_hv_h)_h+E_h(v_h),
		\label{eq:gradient-ibp-new}
	\end{equation}
	where
	\[
	E_h(v_h)
	=\sum_{e\in\cE_h(\mathcal M_h)}\int_e
	\left(\{v_h\}J_{n,e}(v_h)
	+[v_h]A_{n,e}(v_h)\right)\dd s_h.
	\]
	Orthogonality $J_{n,e}(v_h)\perp P_2(e)$ and $[v_h]\perp P_1(e)$, together with standard edge projection estimates, \eqref{eq:value-jump-basic-new}--\eqref{eq:normal-jump-basic-new}, and finite overlap, yield
	\begin{equation}
		|E_h(v_h)|\lesssim h^2\|D_h^2v_h\|_{0,h}^2.
		\label{eq:edge-gradient-error-new}
	\end{equation}
	
	Combining \eqref{eq:broken-poincare-new} and \eqref{eq:edge-gradient-error-new} with \eqref{eq:gradient-ibp-new}, and applying Young's inequality, we obtain
	\begin{align*}
		|v_h|_{1,h}^2
		&\lesssim C_{\rm PF}
		\bigl(|v_h|_{1,h}+h\|D_h^2v_h\|_{0,h}\bigr)
		\|\Delta_hv_h\|_{0,h}
		+h^2\|D_h^2v_h\|_{0,h}^2\\
		&\leq \frac12|v_h|_{1,h}^2
		+C(1+C_{\rm PF}^2)\|\Delta_hv_h\|_{0,h}^2
		+Ch^2\|D_h^2v_h\|_{0,h}^2.
	\end{align*}
	Using \eqref{eq:hessian-prebound-new} in the last term and choosing $h_0\leq1$ such that $Ch_0^2\Lambda_\kappa^*\leq\frac14$, we obtain
	\[
	|v_h|_{1,h}
	\lesssim(1+C_{\rm PF})\|\Delta_hv_h\|_{0,h}.
	\]
	Substituting back into \eqref{eq:hessian-prebound-new} gives
	\[
	\|D_h^2v_h\|_{0,h}
	\lesssim
	\bigl(1+\sqrt{\Lambda_\kappa^*}(1+C_{\rm PF})\bigr)
	\|\Delta_hv_h\|_{0,h}.
	\]
	This establishes the first inequality in \eqref{eq:laplace-hessian-equivalence-new}. The reverse inequality follows directly from the pointwise relation $|\Delta v|\leq\sqrt2|D^2v|$. Finally, \eqref{eq:broken-poincare-new} leads to \eqref{eq:discrete-MT-new}.
\end{proof}

\section{Discretization and Error Analysis}
\label{sec:error-analysis-new}

\subsection{Continuous problem and surface approximation}
\label{sec:surface-geometry-new}

Let $\gamma$ be the $C^4$ closed surface specified in Section 1, with unit normal vector $\nu$ and tangential projection $P=I-\nu\otimes\nu$.
For a scalar $w$ and a tangential vector field $\bm g$, choose local smooth extensions $\widetilde w$ and $\widetilde{\bm g}$ and define
\[
\nabla_\gamma w=P\nabla\widetilde w,
\qquad
\operatorname{div}_\gamma\bm g
=\operatorname{tr}\bigl(P\nabla\widetilde{\bm g}P\bigr),
\qquad
\Delta_\gamma w=\operatorname{div}_\gamma\nabla_\gamma w.
\]
These definitions are independent of the choice of extensions. The covariant Hessian is given by $\nabla_\gamma^2w=P\nabla\widetilde{\bm g}P$, where $\bm g=\nabla_\gamma w$ and $\widetilde{\bm g}$ is a smooth local extension.

For $m\geq0$, let $H^m(\gamma)$ denote the surface Sobolev space with norm $\|\cdot\|_{m,\gamma}$ and seminorm $|\cdot|_{m,\gamma}$.
We retain the inner product notation $(\cdot,\cdot)_\gamma$ from Section 1 and extend the zero-mean notation to $\mathring H^m(\gamma)=\{v\in H^m(\gamma):\int_\gamma v\dd\sigma=0\}$; in particular, $\mathring L^2(\gamma)=\mathring H^0(\gamma)$.
The load functional $l(v)=(f,v)_\gamma$ also applies to $v\in L^2(\gamma)$.
For $q,z\in H^1(\gamma)$, Green's formula on a closed surface is written as
\begin{equation}
	\langle\Delta_\gamma q,z\rangle_{H^{-1}(\gamma),H^1(\gamma)}
	=-(\nabla_\gamma q,\nabla_\gamma z)_\gamma.
	\label{eq:surface-green}
\end{equation}
If $q\in H^2(\gamma)$, the left-hand term equals $(\Delta_\gamma q,z)_\gamma$.
Zero-mean elliptic estimates and the Lax--Milgram lemma lead to the unique solution of \eqref{eq:continuous-problem-new}.
The $C^4$ regularity of $\gamma$ yields
\begin{equation}
	u\in\mathring H^4(\gamma),
	\qquad
	\|u\|_{4,\gamma}\leq C_\gamma\|f\|_{0,\gamma},
	\label{eq:continuous-regularity-new}
\end{equation}
cf. \cite[eq.~(3.16)]{WuZhou2025}. Here and below, $C_\gamma$ denotes a general constant depending on the geometry and regularity of the fixed surface. For discrete estimates, it may also depend on uniform mesh regularity and the upper bound on vertex valences, but it is independent of $h$ and the function being estimated.

The continuous counterpart of the discrete stability estimate follows from \eqref{eq:surface-bochner-grisvard}. Let $K_\gamma^-:=\max\{-K_\gamma,0\}$ and let $C_{P,\gamma}$ be the Poincaré constant on $\gamma$. Subtracting the mean does not change derivatives, so Green's formula and the surface Poincaré inequality give $|w|_{1,\gamma}\leq C_{P,\gamma}\|\Delta_\gamma w\|_{0,\gamma}$.
Consequently, \eqref{eq:surface-bochner-grisvard} implies
\begin{equation}
	\|\nabla_\gamma^2w\|_{0,\gamma}
	\leq\left(1+C_{P,\gamma}^2\|K_\gamma^-\|_{L^\infty(\gamma)}\right)^{1/2}
	\|\Delta_\gamma w\|_{0,\gamma},
	\qquad w\in H^2(\gamma).
	\label{eq:continuous-weak-MT-new}
\end{equation}
Thus, the continuous Laplacian controls the covariant Hessian with a geometry-dependent constant. We next specify the interpolating surface meshes and verify the assumptions of Theorem~\ref{thm:discrete-miranda-talenti-new}.

Choose a sufficiently small $\delta>0$ such that the tubular neighborhood $U_\delta(\gamma)=\{x\in\R^3:\operatorname{dist}(x,\gamma)<\delta\}$ admits the signed distance function $d$, whose sign selection is compatible with $\nu$.
Extend $\nu$ and $P$ to $U_\delta(\gamma)$ via $\nu=\nabla d$ and $P=I-\nu\otimes\nu$. Let $H=\nabla^2d$ and define the closest-point projection $p(x)=x-d(x)\nu(x)$. Then $H|_\gamma$ is the Weingarten map and $\nabla p=P-dH$. Define the global curvature quantity $\mathcal K_0=\max_{i,j}\|H_{ij}\|_{L^\infty(\gamma)}$.
For a scalar, vector, or tensor field $w$ defined on $\gamma$, its closest-point extension is defined component-wise as $w^e=w\circ p$.

Let $\{\Gamma_h\}_{h\to0}$ be a shape-regular and quasi-uniform family of closed interpolating piecewise planar surfaces whose vertices lie on the fixed surface $\gamma$.
In the notation of Section 2, take $\mathcal M_h=\Gamma_h$ and abbreviate $\cT_h=\cT_h(\Gamma_h)$, $\cE_h=\cE_h(\Gamma_h)$, $\cV_h=\cV_h(\Gamma_h)$, $V_h=V_h(\Gamma_h)$, and $\mathring V_h=\mathring V_h(\Gamma_h)$. For the geometric estimates, uniform stability, and error bounds below, assume that $h$ is sufficiently small such that
\begin{equation}
	\Gamma_h\subset U_\delta(\gamma),
	\qquad
	p|_{\Gamma_h}:\Gamma_h\longrightarrow\gamma\ \text{is a bijection},
	\qquad
	h^2\mathcal K_0^2\leq c_*<1,
	\label{eq:surface-mesh-assumptions}
\end{equation}
where $c_*$ is sufficiently small and independent of $h$ and curvature.
For $v_h$ on $\Gamma_h$, define its lift as $v_h^\ell=v_h\circ(p|_{\Gamma_h})^{-1}$.

Let $K^\gamma=p(K)$ and define $\mathcal K_{0,K}=\max_{i,j}\|H_{ij}\|_{L^\infty(K^\gamma)}$ and $\mathcal K_{0,\omega}=\max_{K\subset\omega}\mathcal K_{0,K}$.
Orient the piecewise constant unit normal vectors of the discrete surface compatibly with $\nu^e$, and denote them by $\nu_h$. Let $\nu_K=\nu_h|_K$ and $P_h=I-\nu_h\otimes\nu_h$.

The cell-wise estimates for normal and area elements are
\begin{align}
	\|d\|_{L^\infty(K)}
	&\lesssim h_K^2\mathcal K_{0,K},
	&
	\|\nu^e-\nu_K\|_{L^\infty(K)}
	&\lesssim h_K\mathcal K_{0,K},
	\label{eq:geometry-normal}\\
	\dd\sigma(p(x))&=\mu_h(x)\dd\sigma_h(x),
	&
	\|1-\mu_h\|_{L^\infty(K)}
	+\|1-\mu_h^{-1}\|_{L^\infty(K)}
	&\lesssim h_K^2\mathcal K_{0,K}^2.
	\label{eq:geometry-area}
\end{align}
These estimates and the lifting norm equivalences below are standard; see \cite[section~2 and Lemma~3]{GuzmanMadureiraSarkisWalker2018}. For each $K\in\cT_h$, let $v^\ell=v\circ(p|_K)^{-1}$ denote the lift of a function $v$ defined on $K$ to $K^\gamma$. Then
\begin{align}
	\left|
	\|v\|_{0,K}^2-\|v^\ell\|_{0,K^\gamma}^2
	\right|
	&\lesssim h_K^2\mathcal K_{0,K}^2
	\|v^\ell\|_{0,K^\gamma}^2,
	\qquad v\in L^2(K),
	\label{eq:l2-norm-equivalence-curvature}\\
	\left|
	|v|_{1,K}^2-|v^\ell|_{1,K^\gamma}^2
	\right|
	&\lesssim h_K^2\mathcal K_{0,K}^2
	|v^\ell|_{1,K^\gamma}^2,
	\qquad v\in H^1(K).
	\label{eq:h1-seminorm-equivalence}
\end{align}
By \eqref{eq:surface-mesh-assumptions}, both estimates hold in either direction with constants independent of $h$.

The tangential gradient satisfies $|v_h|_{1,K}^2=(R_h\nabla_\gamma v_h^\ell, \nabla_\gamma v_h^\ell)_{K^\gamma}$, where $R_h=\mu_h^{-1}P(I-dH)P_h(I-dH)P$, and
\begin{equation}
	\|(R_h-I)P\|_{L^\infty(K^\gamma)}
	\lesssim h_K^2\mathcal K_{0,K}^2.
	\label{eq:gradient-transport}
\end{equation}
This identity and \eqref{eq:gradient-transport} (where discrete quantities are understood after lifting to $\gamma$) originate from \cite[proof of Lemma~10]{GuzmanMadureiraSarkisWalker2018}.
For $m=2,3$, we also use
\begin{equation}
	\|v_h^\ell\|_{m,K^\gamma}\leq C_\gamma\|v_h\|_{m,K},
	\qquad
	\|v_h\|_{m,K}\leq C_\gamma\|v_h^\ell\|_{m,K^\gamma},
	\qquad v_h\in H^m(K),
	\label{eq:lift-norm-equivalence-new}
\end{equation}
cf. \cite[eq.~(2.4)]{WuZhou2025}.
For $w\in H^3(\gamma)$, the geometric error of the Laplacian satisfies
\begin{equation}
	\|(\Delta_\gamma w)^e-\Delta_hw^e\|_{0,h}
	\leq C_\gamma\bigl(h|w|_{1,\gamma}
	+h^2|w|_{2,\gamma}\bigr).
	\label{eq:laplace-geometric-error}
\end{equation}
This estimate follows by density from \cite[appendix~C, eq.~(C.7)]{LarssonLarson2017}.

\begin{lemma}[Geometric admissibility of interpolating surfaces]
	\label{lem:uniform-poincare-discrete-surface}
	\label{lem:geometric-admissibility-new}
	For sufficiently small $h$, every zero-mean $v_h\in H^1(\Gamma_h)$ satisfies
	\begin{equation}
		\|v_h\|_{0,h}
		\leq C_\gamma|v_h|_{1,h}.
		\label{eq:uniform-poincare-discrete-surface}
	\end{equation}
	More generally, every zero-mean $v_h\in H_h^1(\Gamma_h)$ satisfies
	\begin{equation}
		\|v_h\|_{0,h}
		\leq C_\gamma\left(
		|v_h|_{1,h}
		+\left(\sum_{e\in\cE_h}h_e^{-1}\|[v_h]\|_{0,e}^2\right)^{1/2}
		\right).
		\label{eq:uniform-broken-poincare-surface}
	\end{equation}
	Furthermore, every $z\in\cV_h$ satisfies
	\begin{equation}
		|\kappa_z|\lesssim
		h_z^2\mathcal K_{0,\omega_z}^2.
		\label{eq:angle-defect-new}
	\end{equation}
	Therefore, $\max_z|\kappa_z|\to0$ and $\Lambda_\kappa^*\lesssim\mathcal K_0^2<\infty$, so Theorem~\ref{thm:discrete-miranda-talenti-new} applies to $\{\Gamma_h\}$.
\end{lemma}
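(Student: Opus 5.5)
The proof splits into three parts: the conforming Poincaré inequality \eqref{eq:uniform-poincare-discrete-surface}, the broken version \eqref{eq:uniform-broken-poincare-surface}, and the angle defect bound \eqref{eq:angle-defect-new}. The final assertions then follow by inspection: $h_z\le h\to0$ gives $\max_z|\kappa_z|\to0$, and dividing \eqref{eq:angle-defect-new} by $h_z^2$ gives $\Lambda_\kappa^*\lesssim\mathcal K_0^2$. Together with \eqref{eq:uniform-broken-poincare-surface} as \eqref{eq:abstract-broken-poincare}, these are the hypotheses of Theorem~\ref{thm:discrete-miranda-talenti-new}.

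\textbf{Conforming Poincaré inequality.} Let $v_h\in H^1(\Gamma_h)$ have zero mean, and set $w=v_h^\ell-\bar w$, where $\bar w$ is the $\gamma$-mean of $v_h^\ell$. The continuous Poincaré inequality gives $\|w\|_{0,\gamma}\le C_{P,\gamma}|v_h^\ell|_{1,\gamma}$. By \eqref{eq:h1-seminorm-equivalence}, $|v_h^\ell|_{1,\gamma}\lesssim|v_h|_{1,h}$. Since $\int_{\Gamma_h}v_h\dd\sigma_h=0$, we have
\[
\bar w\,|\gamma|=\int_{\Gamma_h}v_h(\mu_h-1)\dd\sigma_h,
\]
and \eqref{eq:geometry-area} gives $|\bar w|\lesssim h^2\mathcal K_0^2\|v_h\|_{0,h}$. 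Then \eqref{eq:l2-norm-equivalence-curvature} yields
\[
\|v_h\|_{0,h}\lesssim\|w\|_{0,\gamma}+|\bar w|\lesssim|v_h|_{1,h}+c_*\|v_h\|_{0,h}.
\]
Since $c_*$ is small, the last term can be absorbed into the left-hand side.

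\textbf{Broken Poincaré inequality.} Reduce to the conforming case with a lowest-order averaging (Oswald-type) operator. Let $E_hv_h$ be the continuous piecewise linear function on $\Gamma_h$ whose vertex values are averages of the elementwise $L^2$-projections of $v_h$ onto $P_1$. Then apply the standard patch estimates
\[
\sum_K h_K^{-2}\|v_h-E_hv_h\|_{0,K}^2+|v_h-E_hv_h|_{1,h}^2\lesssim|v_h|_{1,h}^2+\sum_e h_e^{-1}\|[v_h]\|_{0,e}^2 .
\]
These are purely local and hold on folded meshes, because each estimate lives on a bounded-valence patch; the folding does not matter since only values, not vectors, are averaged. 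Apply \eqref{eq:uniform-poincare-discrete-surface} to $E_hv_h$ minus its mean; that mean is bounded by $\|v_h-E_hv_h\|_{0,h}$, since $v_h$ has zero mean. Using $h\le1$ then gives \eqref{eq:uniform-broken-poincare-surface}.

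\textbf{Angle defect bound (main obstacle).} The plan is to compare $\kappa_z$ with the integral of Gaussian curvature over a curvilinear patch, using a discrete Gauss--Bonnet argument.
\begin{enumerate}[(i)]
\item Since the vertices $z$ and $a_r$ lie on $\gamma$, the edge vectors satisfy $|\nu(z)\cdot(a_r-z)|\lesssim\mathcal K_{0,\omega_z}|a_r-z|^2$.
\item Project the fan orthogonally onto the tangent plane $T_z\gamma$. The projected angles around $z$ sum to exactly $2\pi$. By (i), each face normal differs from $\nu(z)$ by $O(h\mathcal K_{0,\omega_z})$, so each angle changes only at first order, and a naive estimate gives only $|\kappa_z|\lesssim h\mathcal K$.
\item To obtain second order, use the exact spherical-geometry identity: $\kappa_z$ equals the signed area of the spherical polygon on $S^2$ with vertices $\nu_{K_1},\ldots,\nu_{K_m}$, i.e.\ the discrete Gauss map image.
\item All $\nu_{K_r}$ lie within $O(h\mathcal K_{0,\omega_z})$ of $\nu(z)$, by the second estimate in \eqref{eq:geometry-normal}. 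Hence the polygon has diameter $O(h_z\mathcal K_{0,\omega_z})$.
\item By bounded valence, the polygon has a bounded number of sides, so its area is at most a constant times the diameter squared, i.e.\ $\lesssim h_z^2\mathcal K_{0,\omega_z}^2$. This proves \eqref{eq:angle-defect-new}.
\end{enumerate}
The delicate points are justifying this identity for a possibly nonconvex vertex star, where the spherical polygon must be interpreted with winding and signed area, and confirming that the signed-area bound still holds in that case.
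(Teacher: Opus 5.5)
Your two Poincaré arguments are essentially the paper's: you absorb the mean through $1-\mu_h$, and you reduce the broken case by a vertex-averaging operator (the paper averages elementwise means $\bar v_K$ rather than $P_1$ projections, which is immaterial). The gap is in the angle-defect bound.

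\textbf{The identity in (iii) holds only modulo $2\pi$.} It is not exact for a general vertex star. The rotation $\mathcal Q_r$ about $e_r$ is parallel transport on $S^2$ along the short arc from $\nu_{K_{r+1}}$ to $\nu_{K_r}$. Comparing with \eqref{eq:holonomy-angle-defect-new} therefore gives only that $\kappa_z$ agrees with the signed area of the Gauss image, up to orientation sign, modulo $2\pi$.

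Normals lying in a small cap do not remove this ambiguity. Take eight coplanar triangles with angle $\pi/2$ at $z$, glued cyclically as a branched double cover. Their Gauss image is a single point, so the area is $0$, but $\kappa_z=-2\pi$.

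What rules this out is that the projection of $\omega_z$ to $T_z\gamma$ is a single fan with angle sum exactly $2\pi$. This has to be derived from the bijectivity of $p|_{\Gamma_h}$ in \eqref{eq:surface-mesh-assumptions}, local charts of $\gamma$, and the normal estimates, and it is exactly the input the paper invokes. You state this fact in (ii) without justification, never connect it to (iii), and leave the non-convex case explicitly unresolved.

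\textbf{With that fact, the Gauss-image detour is unnecessary.} Your assessment in (ii) is mistaken: projection onto $T_z\gamma$ changes angles only at second order, not first. Set $\alpha_r=\tau_r\cdot\nu(z)$; your step (i) gives $|\alpha_r|\lesssim h_z\mathcal K_{0,\omega_z}$. The projected unit radial directions then satisfy
\[
\widehat\tau_r\cdot\widehat\tau_{r+1}
=\frac{\tau_r\cdot\tau_{r+1}-\alpha_r\alpha_{r+1}}{\sqrt{1-\alpha_r^2}\sqrt{1-\alpha_{r+1}^2}}
=\tau_r\cdot\tau_{r+1}+O\bigl(h_z^2\mathcal K_{0,\omega_z}^2\bigr).
\]
Shape regularity keeps the angles away from $0$ and $\pi$, so each angle changes by $O(h_z^2\mathcal K_{0,\omega_z}^2)$. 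Summing against $\sum_r\widehat\theta_r=2\pi$ gives \eqref{eq:angle-defect-new}. Geometrically, orthogonal projection from a plane tilted by $\epsilon$ is $I+O(\epsilon^2)$ on that plane. This is the paper's proof.

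Your route can also be closed with the same single-fan fact. It gives $|\kappa_z|<\pi$ a priori (even a first-order angle estimate suffices), and the small signed area then pins the $2\pi$ branch. However, it needs exactly the same topological input and gains nothing over the direct computation.
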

\begin{proof}
	For zero-mean $v_h\in H^1(\Gamma_h)$, \eqref{eq:geometry-area} gives
	\[
	|\gamma|^{-1/2}\left|\int_\gamma v_h^\ell\dd\sigma\right|
	=|\gamma|^{-1/2}\left|\int_{\Gamma_h}v_h(\mu_h-1)\dd\sigma_h\right|
	\lesssim h^2\mathcal K_0^2\|v_h\|_{0,h}.
	\]
	Applying the surface Poincaré inequality to $v_h^\ell$ after subtracting its mean, using \eqref{eq:l2-norm-equivalence-curvature}--\eqref{eq:h1-seminorm-equivalence}, and absorbing the resulting $h^2\mathcal K_0^2\|v_h\|_{0,h}$ term proves \eqref{eq:uniform-poincare-discrete-surface}.
	
	For the broken estimate, we present on $\Gamma_h$ the averaging argument underlying \cite[Theorem~5.1]{Brenner2003}. Set
	\[
	J_h(v_h)^2:=\sum_{e\in\cE_h}h_e^{-1}\|[v_h]\|_{0,e}^2,
	\qquad
	\bar v_K:=|K|^{-1}\int_Kv_h\dd\sigma_h.
	\]
	Let $\mathcal B_hv_h$ be the continuous piecewise affine function whose value at each vertex is the arithmetic mean of $\bar v_K$ over its associated triangles.
	For adjacent triangles $K^+$ and $K^-$, the element Poincaré inequality and trace inequality yield
	\[
	|\bar v_{K^+}-\bar v_{K^-}|
	\lesssim |v_h|_{1,K^+}+|v_h|_{1,K^-}
	+h_e^{-1/2}\|[v_h]\|_{0,e}.
	\]
	Chaining this estimate across each vertex patch, using the uniform valence upper bound, and scaling on each planar triangle yields
	\[
	\|v_h-\mathcal B_hv_h\|_{0,h}+h|\mathcal B_hv_h|_{1,h}
	\lesssim h\bigl(|v_h|_{1,h}+J_h(v_h)\bigr).
	\]
	These local estimates are invariant under rigid rotations of triangles.
	Subtracting the mean of $\mathcal B_hv_h$ yields $\mathcal B^0_hv_h$. Since $v_h$ has zero mean, $\|v_h-\mathcal B^0_hv_h\|_{0,h}\leq2\|v_h-\mathcal B_hv_h\|_{0,h}$.
	Applying \eqref{eq:uniform-poincare-discrete-surface} to $\mathcal B^0_hv_h$ and taking $h\leq1$ proves \eqref{eq:uniform-broken-poincare-surface} with a constant independent of $h$.
	
	For the angle defect, let $\tau_r$ be the unit radial edge direction at $z$, let $P_z=I-\nu(z)\otimes\nu(z)$, and set $\alpha_r=\tau_r\cdot\nu(z)$ and $\widehat\tau_r=P_z\tau_r/|P_z\tau_r|$.
	Equation \eqref{eq:geometry-normal} gives $|\alpha_r|\lesssim h_z\mathcal K_{0,\omega_z}$. Thus
	\[
	\widehat\tau_r\cdot\widehat\tau_{r+1}
	=\frac{\tau_r\cdot\tau_{r+1}-\alpha_r\alpha_{r+1}}
	{\sqrt{1-\alpha_r^2}\sqrt{1-\alpha_{r+1}^2}}
	=\tau_r\cdot\tau_{r+1}+O(h_z^2\mathcal K_{0,\omega_z}^2).
	\]
	Shape regularity restricts triangle angles away from $0$ and $\pi$, so under this projection, the change in each angle is $O(h_z^2\mathcal K_{0,\omega_z}^2)$.
	Local chart representations of $\gamma$, the bijectivity of $p$, and the normal estimates ensure that for sufficiently small $h$, the projected radial edges form one cyclic fan around $z$ in its tangent plane. Its angles sum to $2\pi$. Summing the angle differences over the uniformly bounded number of incident triangles proves \eqref{eq:angle-defect-new}; see also \cite{BorrelliCazalsMorvan2003}. Taking the maximum over $z$ yields the stated convergence and the bound on $\Lambda_\kappa^*$. Only this bound on the angle defect is used; pointwise convergence of the normalized angle defect to the Gaussian curvature is not required.
\end{proof}

\subsection{Finite element discretization}
\label{sec:discrete-scheme-new}

Given $f\in\mathring L^2(\gamma)$, let $u\in\mathring H^2(\gamma)$ be the solution of \eqref{eq:continuous-problem-new}. Define the discrete zero-mean right-hand side $f_h=f^e-|\Gamma_h|^{-1}\int_{\Gamma_h}f^e\dd\sigma_h$.
The finite element problem reads: find $u_h\in\mathring V_h$ such that
\begin{equation}
	a_h(u_h,v_h)=l_h(v_h)
	\qquad\forall v_h\in\mathring V_h,
	\label{eq:discrete-problem-new}
\end{equation}
where $a_h$ is the bilinear form from \eqref{eq:intro-discrete-laplacian-form} restricted to $V_h\times V_h$, and $l_h:V_h\to\R$ is defined as $l_h(v_h)=(f_h,v_h)_h$.
The stiffness matrix of \eqref{eq:discrete-problem-new} is assembled according to Remark~\ref{rem:standard-assembly}; apart from the local planar element geometry determined by the 3D vertex coordinates, the only additional use of the exact surface in assembling the discrete problem is through the closest-point extension $f^e$ in the load vector.
The relationship between $f_h$ and $f^e$ implies that for every $v_h\in\mathring V_h$,
\begin{equation}
	l_h(v_h)=(f^e,v_h)_h,
	\qquad
	\|f_h\|_{0,h}
	\leq\|f^e\|_{0,h}
	\leq\bigl(1+Ch^2\mathcal K_0^2\bigr)\|f\|_{0,\gamma}.
	\label{eq:source-basic-new}
\end{equation}
The last inequality follows by summing \eqref{eq:l2-norm-equivalence-curvature} over the elements.

\begin{theorem}[Well-posedness of the discrete problem]
	\label{thm:discrete-wellposedness-new}
	On every connected closed triangulation $\Gamma_h$ with non-degenerate triangles where $f_h$ is defined, \eqref{eq:discrete-problem-new} has a unique solution $u_h\in\mathring V_h$. Under the geometric assumptions of Section~\ref{sec:surface-geometry-new}, for sufficiently small $h$, the following stability estimate is uniform with respect to $h$. Defining $\|f_h\|_{-2,h}
	:=\sup_{0\neq v_h\in\mathring V_h}
	\frac{|(f_h,v_h)_h|}{\|v_h\|_{2,h}}$, we have
	\begin{equation}
		\|u_h\|_{2,h}
		\leq C_\gamma\|f_h\|_{-2,h}.
		\label{eq:discrete-stability-new}
	\end{equation}
\end{theorem}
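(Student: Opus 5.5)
The proof splits into an algebraic part, valid on any closed triangulation, and a quantitative part that uses the geometric assumptions.

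\emph{Existence and uniqueness.} The space $\mathring V_h$ is finite dimensional and $a_h$ is symmetric. It therefore suffices to show that $a_h$ is positive definite on $\mathring V_h$. This is exactly the consequence of Proposition~\ref{prop:global-harmonic-kernel}. If $a_h(v_h,v_h)=0$, then $\Delta_hv_h=0$, so $v_h$ is a global constant, and the zero-mean condition forces $v_h=0$. The stiffness matrix on $\mathring V_h$ is then symmetric positive definite. Since $l_h$ is a linear functional on $\mathring V_h$, the system \eqref{eq:discrete-problem-new} has a unique solution. This step uses only the connectedness and closedness of $\Gamma_h$ and the non-degeneracy of its triangles; no mesh-size condition enters.

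\emph{Uniform stability.} For small $h$, Lemma~\ref{lem:geometric-admissibility-new} verifies the hypotheses \eqref{eq:abstract-broken-poincare} and \eqref{eq:abstract-angle-defect-bound} for $\{\Gamma_h\}$. The relevant constants are $C_{\rm PF}\le C_\gamma$ and $\Lambda_\kappa^*\lesssim\mathcal K_0^2$. Theorem~\ref{thm:discrete-miranda-talenti-new} then gives \eqref{eq:discrete-MT-new}, namely $\|v_h\|_{2,h}\le C_{\rm MT}\|\Delta_hv_h\|_{0,h}$ for all $v_h\in\mathring V_h$, with $C_{\rm MT}$ depending only on $\gamma$ and the mesh regularity constants. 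We test \eqref{eq:discrete-problem-new} with $v_h=u_h\in\mathring V_h$ and use the definition of $\|f_h\|_{-2,h}$:
\[
\|\Delta_hu_h\|_{0,h}^2=a_h(u_h,u_h)=(f_h,u_h)_h\le\|f_h\|_{-2,h}\|u_h\|_{2,h}.
\]
Combining this with $\|u_h\|_{2,h}^2\le C_{\rm MT}^2\|\Delta_hu_h\|_{0,h}^2$ gives $\|u_h\|_{2,h}^2\le C_{\rm MT}^2\|f_h\|_{-2,h}\|u_h\|_{2,h}$. Dividing by $\|u_h\|_{2,h}$ (the case $u_h=0$ is trivial) yields \eqref{eq:discrete-stability-new} with $C_\gamma=C_{\rm MT}^2$.

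\emph{Main obstacle.} The only nontrivial point is that the constants are uniform. We must check that the $h_0$ of Theorem~\ref{thm:discrete-miranda-talenti-new} and the smallness threshold of Lemma~\ref{lem:geometric-admissibility-new} are compatible with \eqref{eq:surface-mesh-assumptions}. We must also check that the dependence on $\Lambda_\kappa^*$ and $C_{\rm PF}$ is absorbed into $C_\gamma$ through $\mathcal K_0$. Both facts follow directly from the stated results, so the proof is essentially a composition of them.
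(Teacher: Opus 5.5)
Your proposal is correct and follows essentially the paper's argument. Proposition~\ref{prop:global-harmonic-kernel} gives positive definiteness on $\mathring V_h$ and hence unique solvability on each fixed mesh; then, for small $h$, testing with $u_h$ and applying \eqref{eq:discrete-MT-new} (justified through Lemma~\ref{lem:geometric-admissibility-new} and Theorem~\ref{thm:discrete-miranda-talenti-new}) gives the bound. The only difference is that you cancel $\|u_h\|_{2,h}$ rather than $\|\Delta_hu_h\|_{0,h}$, which is immaterial and yields the same constant $C_{\rm MT}^2$.
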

\begin{proof}
	Proposition~\ref{prop:global-harmonic-kernel} makes $a_h$ positive definite on the finite-dimensional space $\mathring V_h$, thereby proving unique solvability on each fixed mesh. For sufficiently refined interpolating meshes, \eqref{eq:discrete-MT-new} provides a coercivity constant independent of $h$.
	Testing with $u_h$ yields
	\[
	\|\Delta_hu_h\|_{0,h}^2
	\leq\|f_h\|_{-2,h}\|u_h\|_{2,h}
	\leq C_\gamma\|f_h\|_{-2,h}\|\Delta_hu_h\|_{0,h},
	\]
	from which cancellation and \eqref{eq:discrete-MT-new} prove \eqref{eq:discrete-stability-new}, with the zero solution case being trivial.
\end{proof}

\subsection{Global surface interpolation and approximation}
\label{sec:approximation-new}

For $w\in H^3(\gamma)$, define the broken local interpolation operator $\Pi_h$ element-wise by $(\Pi_hw^e)|_K=\Pi_K(w^e|_K)$.
Proposition~\ref{lem:local-interpolation-new}, \eqref{eq:l2-norm-equivalence-curvature}, \eqref{eq:h1-seminorm-equivalence}, and \eqref{eq:lift-norm-equivalence-new} lead to
\begin{equation}
	\|w^e-\Pi_hw^e\|_{m,h}
	\leq C_\gamma h^{3-m}\|w\|_{3,\gamma},
	\qquad m=0,1,2.
	\label{eq:broken-local-interpolation-estimate}
\end{equation}
For each edge $e=K^+\cap K^-$, use the fixed edge orientation $t_e$ and common parameter $t\in[0,1]$ from Section~\ref{sec:b4pm-space-mt}. For $a\in\cV_h$ and $r=0,1$, define the global interpolation operator $I_h:H^3(\gamma)\longrightarrow V_h$ via the following local degrees of freedom:
\begin{align}
	I_hw(a)&=w^e(a), \quad\quad
	D_e^0(I_hw)=D_e^0(w^e),
	\label{eq:global-interpolant-vertex}\\
	D_e^{n,r,K^\pm}(I_hw)
	&=\pm\frac1{2|e|}\int_e
	\left(\partial_{n_e^+}w^e-\partial_{n_e^-}w^e\right)m_r(t)\dd s_h.
	\label{eq:global-interpolant-normal}
\end{align}
In the second line, the upper and lower signs correspond to $K^+$ and $K^-$, respectively, and both conormal moments use the common parameter $t$.
Proposition~\ref{lem:local-unisolvence-new} ensures that these degrees of freedom uniquely determine a function in $V(K)$ on each element. Equations \eqref{eq:global-interpolant-vertex}--\eqref{eq:global-interpolant-normal} and the local reduction constraints imply global compatibility of vertex values, zeroth- and first-order function moments, and zeroth-, first-, and second-order conormal derivative moments.

For $w\in H^3(\gamma)$, let
\[
I_h^0w:=I_hw-\bar c_h(w),
\qquad
\bar c_h(w):=\frac1{|\Gamma_h|}
\int_{\Gamma_h}I_hw\dd\sigma_h,
\]
so that $I_h^0w\in\mathring V_h$.

\begin{proposition}[Zero-mean global interpolation]
	\label{thm:global-interpolation-new}
	For each $w\in\mathring H^3(\gamma)$,
	\begin{align}
		\|w^e-I_h^0w\|_{0,h}+|w^e-I_h^0w|_{1,h}
		&\leq C_\gamma h^2\|w\|_{3,\gamma},
		\label{eq:mean-zero-interpolation-H1-new}\\
		\|D_h^2(w^e-I_h^0w)\|_{0,h}
		+\|(\Delta_\gamma w)^e-\Delta_hI_h^0w\|_{0,h}
		&\leq C_\gamma h\|w\|_{3,\gamma}.
		\label{eq:mean-zero-interpolation-H2-new}
	\end{align}
\end{proposition}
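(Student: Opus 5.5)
The plan is to compare the global interpolant $I_hw$ with the broken local interpolant $\Pi_hw^e$ element by element. Afterwards we handle the mean correction $\bar c_h(w)$ and the Laplacian consistency term separately.

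\emph{Step 1: local comparison of the two interpolants.} On each $K$, the function $\delta_K:=(I_hw-\Pi_hw^e)|_K\in V(K)$ has zero vertex values and zero $D_e^0$ moments, since both interpolants match $w^e$ there. By Proposition~\ref{lem:local-unisolvence-new} and scaling, its only nonzero degrees of freedom are the conormal moments. These equal
\[
D_e^{n,r,K^\pm}(I_hw)-D_e^{n,r,K^\pm}(w^e)
=\mp\frac1{2|e|}\int_e\bigl(\partial_{n_e^+}w^e+\partial_{n_e^-}w^e\bigr)m_r(t)\dd s_h ,
\]
which is, up to sign, half the conormal jump $J_{n,e}(w^e)$ of the smooth extension. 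The key geometric input is that $w^e$ is $C^1$ across $e$ in $\R^3$ and $\nabla w^e=(\nabla_\gamma w)^e(P-dH)$. Hence $\partial_{n_e^\pm}w^e=\nabla w^e\cdot n_e^\pm$, and $n_e^++n_e^-$ is nearly normal to $\gamma$: by \eqref{eq:geometry-normal}, $|n_e^++n_e^-|\lesssim h\mathcal K_{0,\omega_e}$ and $|P(n_e^++n_e^-)|\lesssim h^2\mathcal K_{0,\omega_e}^2$. Combined with $|d|\lesssim h^2$, this gives $|J_{n,e}(w^e)|\le C_\gamma h^2|\nabla_\gamma w|^e$ pointwise on $e$. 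I expect this to be the main obstacle, namely getting the extra power of $h$ from the tangential projection rather than only $O(h)$. If the naive bound $O(h)$ is all one gets, I would try to exploit the orthogonality of the moment weights or $H^3$-regularity of $w$. The $O(h^2)$ conormal-moment defect is what is needed. By scaling, the conormal basis functions of $V(K)$ satisfy $|\phi|_{m,K}\lesssim h_K^{2-m}$. It follows that
\[
|\delta_K|_{m,K}\lesssim h^{1-m}\cdot h^2\|\nabla_\gamma w\|_{L^\infty(\omega_e)} ,
\]
where the $L^\infty$ norm can be replaced by an $L^2$ norm over the lifted patch through an inverse/trace estimate. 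Summing over elements gives $\|I_hw-\Pi_hw^e\|_{m,h}\le C_\gamma h^{3-m}\|w\|_{3,\gamma}$ for $m=0,1,2$ (for $m=2$ the bound is even $h^1$). Together with \eqref{eq:broken-local-interpolation-estimate}, this yields the un-normalized estimates for $w^e-I_hw$.

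\emph{Step 2: mean correction.} Since $\int_\gamma w\dd\sigma=0$, \eqref{eq:geometry-area} gives $|\int_{\Gamma_h}w^e\dd\sigma_h|\lesssim h^2\|w\|_{0,\gamma}$. Therefore
\[
|\bar c_h(w)|\le|\Gamma_h|^{-1}\Bigl(\Bigl|\int_{\Gamma_h}(I_hw-w^e)\Bigr|+\Bigl|\int_{\Gamma_h}w^e\Bigr|\Bigr)\le C_\gamma h^2\|w\|_{3,\gamma} .
\]
Because constants have zero gradient, Hessian and Laplacian, subtracting $\bar c_h(w)$ only affects the $L^2$ term. This proves \eqref{eq:mean-zero-interpolation-H1-new} and the Hessian part of \eqref{eq:mean-zero-interpolation-H2-new}.

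\emph{Step 3: Laplacian term.} We split
\[
(\Delta_\gamma w)^e-\Delta_hI_h^0w=\bigl((\Delta_\gamma w)^e-\Delta_hw^e\bigr)+\Delta_h(w^e-I_hw).
\]
The first part is bounded by \eqref{eq:laplace-geometric-error} by $C_\gamma h\|w\|_{3,\gamma}$. The second part satisfies $\|\Delta_h(\cdot)\|\le\sqrt2\|D_h^2(\cdot)\|$ and is covered by the Hessian bound.
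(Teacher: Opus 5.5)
Your outline is the paper's proof: compare $I_hw$ with $\Pi_hw^e$, reduce to the zeroth- and first-order conormal moments $\frac1{2|e|}\int_e(\partial_{n_e^+}w^e+\partial_{n_e^-}w^e)m_r\dd s_h$, use the basis scaling $h_K^{2-m}$, then handle the mean correction and \eqref{eq:laplace-geometric-error}. Steps 2 and 3 are fine.

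\textbf{The gap.} It lies in the estimate you yourself flag as the main obstacle. The bound $\|P(n_e^++n_e^-)\|_{L^\infty(e)}\lesssim h_e^2\mathcal K_{0,\omega_e}^2$ does not follow from \eqref{eq:geometry-normal} alone, which only gives $|n_e^++n_e^-|\lesssim h_e\mathcal K_{0,\omega_e}$, and you leave it unproved. Your fallback would not close it. For $r=0$ the weight $m_0=1$ has no orthogonality to exploit. The defect $\nabla w^e\cdot(n_e^++n_e^-)$ has a purely geometric coefficient, so extra smoothness of $w$ gains nothing. With only an $O(h)$ tangential defect, the conormal data are of size $\mathcal K_{0,\omega_e}(\|\nabla w^e\|_{0,K}+h|w^e|_{2,K})$. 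Scaling then gives only $O(h)$ in the broken $H^1$ seminorm and $O(1)$ in the broken Hessian, which falls short of both claimed rates.

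\textbf{The missing idea.} The first-order part of $n_e^++n_e^-$ is normal to $\gamma$. Because the two local edge orientations are opposite, $n_e^++n_e^-=t_e\times(\nu_h^+-\nu_h^-)$. Write $t_e=t_T+\alpha\nu^e$ and $\nu_h^+-\nu_h^-=q_T+\beta\nu^e$ with $t_T,q_T$ tangential.
\begin{itemize}
\item From $t_e\cdot\nu_h^+=0$ and \eqref{eq:geometry-normal}, $|\alpha|+|q_T|\lesssim h_e\mathcal K_{0,\omega_e}$.
\item From the unit-vector identity $\nu^e\cdot\nu_h^\pm=1-\frac12|\nu^e-\nu_h^\pm|^2$, $|\beta|\lesssim h_e^2\mathcal K_{0,\omega_e}^2$.
\item Since $t_T\times q_T$ is parallel to $\nu^e$, every tangential term of the cross product contains $\beta$ or $\alpha q_T$, which gives the $O(h_e^2\mathcal K_{0,\omega_e}^2)$ bound.
\end{itemize}
Combine this with the exact identity $\nabla w^e=(P-dH)(\nabla_\gamma w)^e=P\nabla w^e$ (no bound on $d$ is needed) and a trace inequality. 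This yields $|D_e^{n,r,K^\pm}(I_hw-\Pi_hw^e)|\lesssim\mathcal K_{0,\omega_e}^2(h_e\|\nabla_hw^e\|_{0,\omega_e}+h_e^2\|D_h^2w^e\|_{0,\omega_e})$, and the rest of your argument then goes through.

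\textbf{A bookkeeping slip.} Your per-element bound has a misplaced power. Measured in $L^\infty$, it should read $h^{4-m}\|\nabla_\gamma w\|_{L^\infty}$. The factor $h^{3-m}$ is correct only against the $L^2$ patch norms displayed above. As you wrote it, summing $h^{3-m}\|\nabla_\gamma w\|_{L^\infty}$ over $O(h^{-2})$ elements would lose a power of $h$.
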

\begin{proof}
	By construction, $I_hw$ and $\Pi_hw^e$ share vertex values and zeroth-order function moments; the reduction constraints identify their first-order function moments, and both have zero second-order conormal-derivative moments. Therefore, only the zeroth- and first-order conormal moments of $I_hw-\Pi_hw^e$ remain, and they satisfy
	\[
	|D_e^{n,r,K^\pm}(I_hw-\Pi_hw^e)|
	=\frac1{2|e|}\left|\int_e
	\left(\partial_{n_e^+}w^e+\partial_{n_e^-}w^e\right)
	m_r(t)\dd s_h\right|.
	\]
	Opposite local edge orientations give
	\begin{equation}
		n_e^++n_e^-=t_e\times(\nu_h^+-\nu_h^-).
		\label{eq:conormal-sum-cross}
		\end{equation}
	At each point of $e$, decompose $t_e$ and $\nu_h^+-\nu_h^-$ into tangential and normal components with respect to the closest point $p(x)$ on $\gamma$:
	\[
	t_e=t_T+\alpha\nu^e,
	\qquad
	\nu_h^+-\nu_h^-=q_T+\beta\nu^e,
	\qquad
	t_T=Pt_e,
	\quad q_T=P(\nu_h^+-\nu_h^-).
	\]
	Using $t_e\cdot\nu_h^+=0$, \eqref{eq:geometry-normal}, quasi-uniformity, and the unit vector identity $\nu^e\cdot\nu_h^\pm=1-\frac12|\nu^e-\nu_h^\pm|^2$, we have
	\[
	|\alpha|+|q_T|\lesssim h_e\mathcal K_{0,\omega_e},
	\qquad
	|\beta|\lesssim h_e^2\mathcal K_{0,\omega_e}^2.
	\]
	Since $P(t_T\times q_T)=0$ and each remaining term in \eqref{eq:conormal-sum-cross} contains $\beta$ or $\alpha q_T$, it follows that
	\begin{equation}
		\|P(n_e^++n_e^-)\|_{L^\infty(e)}
		\lesssim h_e^2\mathcal K_{0,\omega_e}^2.
		\label{eq:tangential-conormal-sum}
		\end{equation}
	Since $\nabla w^e=P\nabla w^e$, \eqref{eq:tangential-conormal-sum}, edge trace inequalities, and lifting norm equivalences yield
	\[
	|D_e^{n,r,K^\pm}(I_hw-\Pi_hw^e)|
	\lesssim
	\mathcal K_{0,\omega_e}^2\left(
	h_e\|\nabla_hw^e\|_{0,\omega_e}
	+h_e^2\|D_h^2w^e\|_{0,\omega_e}\right).
	\]
	
	The scaling of derivative data basis functions in $H^m(K)$ is $h_K^{2-m}$.
	Thus, finite edge patch overlap and \eqref{eq:broken-local-interpolation-estimate} imply
	\[
	\|w^e-I_hw\|_{m,h}
	\leq C_\gamma h^{3-m}\|w\|_{3,\gamma},
	\qquad m=0,1,2.
	\]
	The geometric Laplacian estimate \eqref{eq:laplace-geometric-error} then gives
	\[
	\|(\Delta_\gamma w)^e-\Delta_hI_hw\|_{0,h}
	\leq\|(\Delta_\gamma w)^e-\Delta_hw^e\|_{0,h}
	+\|\Delta_h(w^e-I_hw)\|_{0,h}
	\leq C_\gamma h\|w\|_{3,\gamma}.
	\]
	
	Since $I_h^0w-I_hw$ is a constant, the two interpolants have the same derivatives.
	Furthermore, since $w$ has zero mean, area transformation yields
	\[
	|\bar c_h(w)|
	\lesssim h^2\mathcal K_0^2\|w\|_{0,\gamma}
	+\|I_hw-w^e\|_{0,h}
	\leq C_\gamma h^2\|w\|_{3,\gamma}.
	\]
	This bound combined with previous estimates proves \eqref{eq:mean-zero-interpolation-H1-new} and \eqref{eq:mean-zero-interpolation-H2-new}.
\end{proof}

\subsection{Conforming companion operator and consistency estimates}
\label{sec:auxiliary-estimates-new}

Since $V(K)\subset P_6(K)$, define the nodal averaging operator $\mathcal{A}_h$ at the degree-six Lagrange nodes of each triangle as follows: preserve element values at interior nodes, average the two traces at interior edge nodes, and use single values at vertex nodes. Define the mean-preserving correction by:
\[
\widetilde{\mathcal{A}}_h v_h
=\mathcal{A}_h v_h+\frac1{|\Gamma_h|}
\int_{\Gamma_h}(v_h-\mathcal{A}_h v_h)\dd\sigma_h.
\]
By construction, $\mathcal{A}_hv_h$ is a continuous piecewise polynomial of degree six. The mean correction is constant, so $\widetilde{\mathcal{A}}_hv_h$ also belongs to $H^1(\Gamma_h)$ and has the same integral as $v_h$.

\begin{lemma}[Bounds for the mean-preserving conforming companion operator]
	\label{lem:h1-companion-new}
	For each $v_h\in V_h$,
	\begin{equation}
		\|v_h-\widetilde{\mathcal{A}}_h v_h\|_{0,h}
		+h|v_h-\widetilde{\mathcal{A}}_h v_h|_{1,h}
		\leq C_\gamma h^2\|\Delta_hv_h\|_{0,h}.
		\label{eq:companion-approx-new}
	\end{equation}
	If in addition $v_h\in\mathring V_h$, then
	\begin{equation}
		\|\widetilde{\mathcal{A}}_h v_h\|_{1,h}
		\leq C_\gamma\|\Delta_hv_h\|_{0,h}.
		\label{eq:companion-stability-new}
	\end{equation}
\end{lemma}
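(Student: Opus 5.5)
The plan is to prove \eqref{eq:companion-approx-new} first for the unmodified averaging operator $\mathcal A_h$, with the broken Hessian on the right-hand side. Then treat the mean correction. Finally, convert the Hessian to the Laplacian using Theorem~\ref{thm:discrete-miranda-talenti-new} via Lemma~\ref{lem:geometric-admissibility-new}.

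\textbf{Step 1: the standard averaging bound.} On each $K$, $v_h-\mathcal A_hv_h$ is a degree-six polynomial whose nodal values vanish at interior nodes. At an interior edge node it equals $\pm\frac12[v_h]$ there, and at vertices it vanishes, since $v_h$ is single-valued. Scaling of the Lagrange basis together with the edge inverse estimate then gives
\[
\|v_h-\mathcal A_hv_h\|_{0,K}^2+h_K^2|v_h-\mathcal A_hv_h|_{1,K}^2
\lesssim\sum_{e\subset\partial K,\,e\in\cE_{h,\rm int}}h_e\|[v_h]\|_{0,e}^2 .
\]
Everything here is computed within planar triangles, so it is unaffected by folding. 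Inserting \eqref{eq:value-jump-basic-new} gives $h_e\|[v_h]\|_{0,e}^2\lesssim h_e^4\|D_h^2v_h\|_{0,\omega_e}^2$. Finite overlap then yields
\[
\|v_h-\mathcal A_hv_h\|_{0,h}+h|v_h-\mathcal A_hv_h|_{1,h}\lesssim h^2\|D_h^2v_h\|_{0,h}.
\]

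\textbf{Step 2: the mean correction.} The correction constant is $c=|\Gamma_h|^{-1}\int_{\Gamma_h}(v_h-\mathcal A_hv_h)\dd\sigma_h$. Cauchy--Schwarz gives $|c|\,|\Gamma_h|^{1/2}\le\|v_h-\mathcal A_hv_h\|_{0,h}$, and $|\Gamma_h|$ is bounded above and below uniformly by \eqref{eq:geometry-area}. Constants have no gradient, so the Step~1 bound carries over to $\widetilde{\mathcal A}_h$.

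\textbf{Step 3: passing to the Laplacian.} I expect this to be the main point requiring care. The key observation is that $v_h-\widetilde{\mathcal A}_hv_h$ is unchanged when a constant is added to $v_h$. We may therefore replace $v_h$ by $v_h-\bar v_h\in\mathring V_h$ without altering either side of \eqref{eq:companion-approx-new}. For $h$ small, \eqref{eq:discrete-MT-new} gives $\|D_h^2v_h\|_{0,h}\le C_{\rm MT}\|\Delta_hv_h\|_{0,h}$. Its constant depends on $C_{\rm PF}$ and $\Lambda_\kappa^*$, both of which are controlled by Lemma~\ref{lem:geometric-admissibility-new}, and so it is absorbed into $C_\gamma$. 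This proves \eqref{eq:companion-approx-new}.

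\textbf{Step 4: stability.} For $v_h\in\mathring V_h$, the triangle inequality gives $\|\widetilde{\mathcal A}_hv_h\|_{1,h}\le\|v_h\|_{1,h}+\|v_h-\widetilde{\mathcal A}_hv_h\|_{1,h}$. The first term is bounded by \eqref{eq:discrete-MT-new} and the second by \eqref{eq:companion-approx-new} with $h\le1$. This gives \eqref{eq:companion-stability-new}.
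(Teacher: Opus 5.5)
Your proposal is correct and follows the paper's proof essentially step for step. It bounds the nodal averaging error by edge jumps, uses \eqref{eq:value-jump-basic-new} to pass to the broken Hessian, bounds the mean correction trivially in $L^2$, and uses the Miranda--Talenti estimate to reach the Laplacian. The only differences are cosmetic. The paper applies \eqref{eq:laplace-hessian-equivalence-new} directly, which already holds on all of $V_h$, so your constant shift is unnecessary. For \eqref{eq:companion-stability-new}, the paper uses the Poincar\'e inequality \eqref{eq:uniform-poincare-discrete-surface} on the zero-mean conforming function $\widetilde{\mathcal{A}}_hv_h$, whereas your triangle inequality combined with \eqref{eq:discrete-MT-new} is equally valid.
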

\begin{proof}
	On each $K$, $v_h-\mathcal{A}_hv_h$ vanishes at the interior Lagrange nodes and at the vertices, and equals (up to sign) half of the corresponding jump at interior edge nodes. Thus, finite-dimensional norm equivalence, edge inverse estimates, and scaling properties give
	\[
	\|v_h-\mathcal{A}_h v_h\|_{m,h}^2
	\lesssim
	\sum_{e\in\cE_h}h_e^{1-2m}\|[v_h]\|_{0,e}^2,
	\qquad m=0,1.
	\]
	See corresponding nodal averaging estimates on planar meshes \cite[Theorem~2.2]{KarakashianPascal2003}. Using \eqref{eq:value-jump-basic-new} and finite patch overlap gives
	\[
	\|v_h-\mathcal{A}_h v_h\|_{0,h}
	\lesssim h^2\|D_h^2v_h\|_{0,h},
	\qquad
	|v_h-\mathcal{A}_h v_h|_{1,h}
	\lesssim h\|D_h^2v_h\|_{0,h}.
	\]
	The mean correction satisfies $\|\widetilde{\mathcal{A}}_h v_h-\mathcal{A}_h v_h\|_{0,h}
	\leq\|v_h-\mathcal{A}_h v_h\|_{0,h}$ and leaves the $H^1$ seminorm unchanged.
	Previous estimates along with \eqref{eq:laplace-hessian-equivalence-new} therefore prove \eqref{eq:companion-approx-new}. If $v_h\in\mathring V_h$, then $\widetilde{\mathcal{A}}_h v_h$ also has zero mean, so \eqref{eq:uniform-poincare-discrete-surface}, \eqref{eq:companion-approx-new}, and $|v_h|_{1,h}\leq C_\gamma\|\Delta_hv_h\|_{0,h}$ yield \eqref{eq:companion-stability-new}.
\end{proof}

\begin{lemma}[Edge consistency estimate]
	\label{lem:edge-consistency-new}
	For each $w\in H^3(\gamma)$ and $v_h\in V_h$,
	\begin{equation}
		\left|
		\sum_{e\in\cE_h}
		((\Delta_\gamma w)^e,J_{n,e}(v_h))_e
		\right|
		\leq C_\gamma
		h\|w\|_{3,\gamma}\|\Delta_hv_h\|_{0,h}.
		\label{eq:edge-consistency-energy-new}
	\end{equation}
	For each $w,v\in H^3(\gamma)$,
	\begin{equation}
		\left|
		\sum_{e\in\cE_h}
		((\Delta_\gamma w)^e,J_{n,e}(I_h^0v))_e
		\right|
		\leq C_\gamma
		h^2\|w\|_{3,\gamma}\|v\|_{3,\gamma}.
		\label{eq:edge-consistency-interpolants-new}
	\end{equation}
\end{lemma}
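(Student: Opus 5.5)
We use the two weak-continuity properties of $V_h$: each $J_{n,e}(v_h)$ is $L^2(e)$-orthogonal to $P_2(e)$, and $J_{n,e}(v_h)$ obeys the jump bound \eqref{eq:normal-jump-basic-new}. Writing $\psi:=\Delta_\gamma w$ and letting $\Pi_e^k$ be the $L^2(e)$-projection onto $P_k(e)$, orthogonality gives
\[
((\Delta_\gamma w)^e,J_{n,e}(v_h))_e=(\psi^e-\Pi_e^0\psi^e,J_{n,e}(v_h))_e .
\]
We use $k=0$ deliberately: only $\psi\in H^1(\gamma)$ is available when $w\in H^3(\gamma)$, and a higher projection would demand more regularity without improving the bound.

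For \eqref{eq:edge-consistency-energy-new}, the scaled trace inequality and the elementwise Poincaré inequality, applied on one triangle $K\subset\omega_e$ for the smooth extension $\psi^e$, give
\[
\|\psi^e-\Pi_e^0\psi^e\|_{0,e}\lesssim h_e^{1/2}\|\nabla\psi^e\|_{0,K}.
\]
Combining this with \eqref{eq:normal-jump-basic-new}, each edge term is $\lesssim h_e\|\nabla\psi^e\|_{0,\omega_e}\|D_h^2v_h\|_{0,\omega_e}$. We then sum using Cauchy--Schwarz and finite overlap. The lifting equivalences \eqref{eq:h1-seminorm-equivalence} convert $\|\nabla_h\psi^e\|_{0,h}$ into $C_\gamma|\psi|_{1,\gamma}\le C_\gamma\|w\|_{3,\gamma}$. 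Finally, the first inequality in \eqref{eq:laplace-hessian-equivalence-new} (via Lemma~\ref{lem:geometric-admissibility-new} and Theorem~\ref{thm:discrete-miranda-talenti-new}, for $h$ small) replaces $\|D_h^2v_h\|_{0,h}$ by $\|\Delta_hv_h\|_{0,h}$. This proves \eqref{eq:edge-consistency-energy-new}.

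For \eqref{eq:edge-consistency-interpolants-new}, a sharper bound on $J_{n,e}(I_h^0v)$ is needed: we aim for $\|J_{n,e}(I_h^0v)\|_{0,e}\le C_\gamma h_e^{3/2}\|v\|_{3,\omega_e^\gamma}$. Since the constant shift does not affect conormal derivatives, $J_{n,e}(I_h^0v)=J_{n,e}(I_hv)$, and we split
\[
J_{n,e}(I_hv)=J_{n,e}(I_hv-v^e)+\bigl(\partial_{n_e^+}v^e+\partial_{n_e^-}v^e\bigr).
\]
For the first term, the scaled trace inequality on each of $K^\pm$ together with the interpolation bounds $\|v^e-I_hv\|_{m,h}\le C_\gamma h^{3-m}\|v\|_{3,\gamma}$ (established locally in the proof of Proposition~\ref{thm:global-interpolation-new}, with the $m=1,2$ estimates holding patchwise) give the bound $h_e^{3/2}\|v\|_{3}$ locally. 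For the second term, $\nabla v^e=P\nabla v^e$ yields $\partial_{n_e^+}v^e+\partial_{n_e^-}v^e=\nabla v^e\cdot P(n_e^++n_e^-)$. By \eqref{eq:tangential-conormal-sum} this is $O(h_e^2\mathcal K_0^2)|\nabla v^e|$ pointwise, so its $L^2(e)$ norm is $\lesssim h_e^2\|\nabla v^e\|_{0,e}$; a trace inequality bounds this by $\lesssim h_e^{3/2}\|v\|_{2,\omega_e}$, which suffices. Pairing with $\|\psi^e-\Pi_e^0\psi^e\|_{0,e}\lesssim h_e^{1/2}\|\nabla\psi^e\|_{0,K}$ gives an $h_e^2$ factor per edge. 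Summing with finite overlap and lifting gives $C_\gamma h^2\|w\|_{3,\gamma}\|v\|_{3,\gamma}$.

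The main obstacle is this localized sharper bound for $J_{n,e}(I_hv)$. The global estimate in Proposition~\ref{thm:global-interpolation-new} must be used in its patchwise form, with $I_hv-\Pi_hv^e$ controlled edge by edge through the conormal-moment discrepancy computed in that proof, so that the $h_e^{3/2}$ edge scaling is not lost. A fallback is to note that the discrepancy terms already carry the factor $h_e^2\mathcal K^2$ and to treat them in the same way as the geometric term.
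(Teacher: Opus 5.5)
Your proof is correct. For \eqref{eq:edge-consistency-energy-new} it is essentially the paper's argument. The paper projects onto $P_2(e)$ instead of $P_0(e)$, but, as you observe, both arguments use only the $H^1$ regularity of $\Delta_\gamma w$, so the choice of projection is immaterial.

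For \eqref{eq:edge-consistency-interpolants-new} you take a genuinely different route.

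\emph{The paper's route.} It uses the stronger orthogonality $J_{n,e}(I_h^0v)\perp P_1(e)$ and a scaled edge Poincar\'e inequality to get $\|J_{n,e}(I_h^0v)\|_{0,e}\lesssim h_e^{3/2}|I_h^0v|_{3,\omega_e}$. It then bounds $|I_h^0v|_{3,K}\lesssim h_K^{-1}|I_h^0v-v^e|_{2,K}+|v^e|_{3,K}$ by an inverse estimate against a local quadratic approximant. Everything therefore reduces to the already proved bound \eqref{eq:mean-zero-interpolation-H2-new}, and the folding of the mesh never appears explicitly.

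\emph{Your route.} You split $J_{n,e}(I_hv)$ into the conormal jump of the interpolation error plus the geometric mismatch $\nabla v^e\cdot P(n_e^++n_e^-)$, and you reuse \eqref{eq:tangential-conormal-sum}. This needs only $J_{n,e}\perp P_0(e)$ and avoids the third-order inverse estimate. It also shows that the fold contributes at order $h_e^2\mathcal K_{0,\omega_e}^2$ pointwise, one order better than required, so the $h^2$ rate is driven purely by interpolation accuracy. The paper's argument is shorter because it hides the geometry inside the interpolation estimate. Yours is more transparent about where each power of $h$ comes from.

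The localization you flag as the main obstacle is not actually needed. Square your per-edge trace bound and sum over edges, using finite overlap and quasi-uniformity. The global bounds $|v^e-I_hv|_{m,h}\le C_\gamma h^{3-m}\|v\|_{3,\gamma}$ from the proof of Proposition~\ref{thm:global-interpolation-new} then give directly
\[
\sum_{e\in\cE_h}\|J_{n,e}(I_hv-v^e)\|_{0,e}^2
\lesssim h^{-1}|v^e-I_hv|_{1,h}^2+h\,|v^e-I_hv|_{2,h}^2
\le C_\gamma h^3\|v\|_{3,\gamma}^2 .
\]
Cauchy--Schwarz against $\bigl(\sum_e\|(\Delta_\gamma w)^e-\Pi_e^0(\Delta_\gamma w)^e\|_{0,e}^2\bigr)^{1/2}\lesssim h^{1/2}|(\Delta_\gamma w)^e|_{1,h}$ then finishes the estimate. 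No patchwise version of the interpolation bound or edge-by-edge tracking of the conormal-moment discrepancies is required.
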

\begin{proof}
	The moment conditions yield $J_{n,e}(v_h)\perp P_2(e)$ and $J_{n,e}(I_h^0v)\perp P_1(e)$. Scaled Poincaré inequalities and inverse trace estimates produce
	\[
	h_e^{-1/2}\|J_{n,e}(v_h)\|_{0,e}
	\lesssim \|D_h^2v_h\|_{0,\omega_e},
	\qquad
	h_e^{-1/2}\|J_{n,e}(I_h^0v)\|_{0,e}
	\lesssim h_e|I_h^0v|_{3,\omega_e}.
	\]
	Consequently, finite edge patch overlap and \eqref{eq:laplace-hessian-equivalence-new} give
	\[
	\left(\sum_{e\in\cE_h}h_e^{-1}
	\|J_{n,e}(v_h)\|_{0,e}^2\right)^{1/2}
	\leq C_\gamma\|\Delta_hv_h\|_{0,h}.
	\]
	For each $K$, choose $p_K\in P_2(K)$ such that $|v^e-p_K|_{2,K}\lesssim h_K|v^e|_{3,K}$. Inverse estimates yield
	\[
	|I_h^0v|_{3,K}=|I_h^0v-p_K|_{3,K}
	\lesssim h_K^{-1}|I_h^0v-p_K|_{2,K}
	\lesssim h_K^{-1}|I_h^0v-v^e|_{2,K}+|v^e|_{3,K}.
	\]
	Since $I_h^0$ is invariant under the addition of constants, we can replace $v$ by its zero-mean part when applying \eqref{eq:mean-zero-interpolation-H2-new}.
	Quasi-uniformity, \eqref{eq:mean-zero-interpolation-H2-new}, and lifting norm equivalences then imply
	\[
	\left(\sum_{e\in\cE_h}h_e^{-1}
	\|J_{n,e}(I_h^0v)\|_{0,e}^2\right)^{1/2}
	\leq C_\gamma h\|v\|_{3,\gamma}.
	\]
	
	Let $\Pi_e^r$ be the $L^2(e)$ projection onto $P_r(e)$. For $(r,y_h)=(2,v_h)$ or $(1,I_h^0v)$, the corresponding orthogonality, weighted Cauchy--Schwarz inequality, and trace approximation give
	\begin{align*}
		\left|\sum_{e\in\cE_h}
		((\Delta_\gamma w)^e,J_{n,e}(y_h))_e\right|
		&\leq
		\left(\sum_e h_e
		\|(\Delta_\gamma w)^e-\Pi_e^r(\Delta_\gamma w)^e\|_{0,e}^2\right)^{1/2}
		\left(\sum_e h_e^{-1}\|J_{n,e}(y_h)\|_{0,e}^2\right)^{1/2}\\
		&\leq C_\gamma h\|w\|_{3,\gamma}
		\left(\sum_e h_e^{-1}\|J_{n,e}(y_h)\|_{0,e}^2\right)^{1/2}.
	\end{align*}
	Combining this estimate with both jump bounds proves \eqref{eq:edge-consistency-energy-new} and \eqref{eq:edge-consistency-interpolants-new}.
\end{proof}

\begin{lemma}[Right-hand side consistency]
	\label{lem:source-consistency-new}
	For each $v_h\in\mathring V_h$,
	\begin{equation}
		\left|
		l_h(v_h)-l\bigl((\widetilde{\mathcal{A}}_h v_h)^\ell\bigr)
		\right|
		\leq C_\gamma
		h^2\|f\|_{0,\gamma}\|\Delta_hv_h\|_{0,h}.
		\label{eq:source-consistency-companion-new}
	\end{equation}
	For each $v\in\mathring H^3(\gamma)$,
	\begin{equation}
		|l_h(I_h^0v)-l(v)|
		\leq C_\gamma h^2\|f\|_{0,\gamma}\|v\|_{3,\gamma}.
		\label{eq:source-consistency-interpolant-new}
	\end{equation}
\end{lemma}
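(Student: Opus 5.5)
For \eqref{eq:source-consistency-companion-new}, I will first rewrite both functionals on $\Gamma_h$. By \eqref{eq:source-basic-new}, $l_h(v_h)=(f^e,v_h)_h$ for $v_h\in\mathring V_h$. Changing variables with \eqref{eq:geometry-area} gives
\[
l\bigl((\widetilde{\mathcal A}_hv_h)^\ell\bigr)=(f^e\mu_h,\widetilde{\mathcal A}_hv_h)_h .
\]
The difference then splits as
\[
(f^e,v_h-\widetilde{\mathcal A}_hv_h)_h+\bigl(f^e(1-\mu_h),\widetilde{\mathcal A}_hv_h\bigr)_h .
\]
For the first term I use Cauchy--Schwarz, $\|f^e\|_{0,h}\le C\|f\|_{0,\gamma}$, and the $L^2$ part of \eqref{eq:companion-approx-new}. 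This gives $C_\gamma h^2\|f\|_{0,\gamma}\|\Delta_hv_h\|_{0,h}$ directly. For the second term I use $\|1-\mu_h\|_{L^\infty}\lesssim h^2\mathcal K_0^2$ together with \eqref{eq:companion-stability-new}, which bounds $\|\widetilde{\mathcal A}_hv_h\|_{0,h}\le C_\gamma\|\Delta_hv_h\|_{0,h}$. This produces the same $h^2$ bound.

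The bound on the companion operator is the step that needs care, because it relies on the discrete Miranda--Talenti estimate. Lemma~\ref{lem:h1-companion-new} already packages it, so here it is only invoked. The mean-preserving correction is not actually needed for this part; it matters only to justify \eqref{eq:companion-stability-new}, which needs zero mean, and that holds since $v_h\in\mathring V_h$.

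For \eqref{eq:source-consistency-interpolant-new}, I note that $I_h^0v\in\mathring V_h$, so $l_h(I_h^0v)=(f^e,I_h^0v)_h$. Since $v\in\mathring H^3(\gamma)$, the change of variables gives
\[
l(v)=(f,v)_\gamma=(f^e\mu_h,v^e)_h .
\]
The difference is therefore
\[
(f^e,I_h^0v-v^e)_h+\bigl(f^e(1-\mu_h),v^e\bigr)_h .
\]
The first term is bounded by $\|f^e\|_{0,h}\|v^e-I_h^0v\|_{0,h}\le C_\gamma h^2\|f\|_{0,\gamma}\|v\|_{3,\gamma}$, using \eqref{eq:mean-zero-interpolation-H1-new}. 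The second term is bounded by $Ch^2\mathcal K_0^2\|f\|_{0,\gamma}\|v\|_{0,\gamma}$, using \eqref{eq:geometry-area} and \eqref{eq:l2-norm-equivalence-curvature}.

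Both estimates are thus routine once the cited lemmas are in place. No real obstacle is expected beyond keeping track of the area factor $\mu_h$ and the zero-mean bookkeeping, that is, checking that $l_h$ may be replaced by $(f^e,\cdot)_h$ on $\mathring V_h$.
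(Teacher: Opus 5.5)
Your proposal is correct and follows the paper's proof essentially verbatim. Both arguments rewrite $l_h$ as $(f^e,\cdot)_h$ on $\mathring V_h$, transfer $l$ to $\Gamma_h$ with the area factor $\mu_h$, split each difference into an approximation term plus a $(1-\mu_h)$ term, and bound these with \eqref{eq:geometry-area}, \eqref{eq:companion-approx-new}--\eqref{eq:companion-stability-new}, and \eqref{eq:mean-zero-interpolation-H1-new}. One minor wording point: the change of variables for $l(v)$ does not use the zero mean of $v$; that hypothesis is what allows you to invoke \eqref{eq:mean-zero-interpolation-H1-new}.
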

\begin{proof}
	The discrete zero-mean property and area transformation yield the exact identities
	\begin{align*}
		l_h(v_h)-l\bigl((\widetilde{\mathcal{A}}_h v_h)^\ell\bigr)
		&= (f^e,v_h-\widetilde{\mathcal{A}}_h v_h)_h
		+(f^e(1-\mu_h),\widetilde{\mathcal{A}}_h v_h)_h,\\
		l_h(I_h^0v)-l(v)
		&= (f^e,I_h^0v-v^e)_h+(f^e(1-\mu_h),v^e)_h.
	\end{align*}
	The first identity follows from \eqref{eq:source-basic-new}; the second uses $I_h^0v\in\mathring V_h$. Equations \eqref{eq:geometry-area}, \eqref{eq:companion-approx-new}--\eqref{eq:companion-stability-new}, and \eqref{eq:mean-zero-interpolation-H1-new} prove \eqref{eq:source-consistency-companion-new}--\eqref{eq:source-consistency-interpolant-new}.
\end{proof}

\begin{lemma}[Second-order bilinear consistency for interpolants]
	\label{lem:bilinear-superconsistency-new}
	For $w,v\in\mathring H^3(\gamma)$,
	\begin{equation}
		\left|
		a_h(I_h^0w,I_h^0v)-a(w,v)
		\right|
		\leq C_\gamma h^2\|w\|_{3,\gamma}\|v\|_{3,\gamma}.
		\label{eq:bilinear-superconsistency-new}
	\end{equation}
\end{lemma}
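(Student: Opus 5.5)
The argument rests on one identity. On each element, $a(w,v)$ is written as $(\Delta_\gamma w,\Delta_\gamma v)_\gamma=-(\nabla_\gamma\Delta_\gamma w,\nabla_\gamma v)_\gamma$ using \eqref{eq:surface-green}. The discrete form is compared with this through elementwise integration by parts on $\Gamma_h$, with the interpolation errors treated as perturbations. Throughout, write $w_h=I_h^0w$ and $v_h=I_h^0v$. The starting split is
\[
a_h(w_h,v_h)-a(w,v)
=\bigl(\Delta_hw_h-(\Delta_\gamma w)^e,\Delta_hv_h\bigr)_h
+\bigl((\Delta_\gamma w)^e,\Delta_hv_h\bigr)_h-a(w,v).
\]
A direct Cauchy--Schwarz bound on the first term, using \eqref{eq:mean-zero-interpolation-H2-new} twice, only gives order $h$. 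So we do not split it this crudely. Instead, both the $w$-part and the $v$-part are handled symmetrically through integration by parts, so that every order-$h$ defect is paired with a lower-order quantity.

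\textbf{Step 1 (the $v$-side).} Integrate $((\Delta_\gamma w)^e,\Delta_hv_h)_h$ by parts on each planar $K$. This gives $-(\nabla_h(\Delta_\gamma w)^e,\nabla_hv_h)_h$ plus the edge terms $\sum_e\int_e\bigl(\{(\Delta_\gamma w)^e\}J_{n,e}(v_h)+[(\Delta_\gamma w)^e]A_{n,e}(v_h)\bigr)$. Since $(\Delta_\gamma w)^e$ is single-valued, this reduces to $\sum_e((\Delta_\gamma w)^e,J_{n,e}(v_h))_e$, which is $O(h^2\|w\|_{3,\gamma}\|v\|_{3,\gamma})$ by \eqref{eq:edge-consistency-interpolants-new}. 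Next, replace $\nabla_hv_h$ by $\nabla_hv^e$. The error is $(\nabla_h(\Delta_\gamma w)^e,\nabla_h(v^e-v_h))_h$, bounded by $\|w\|_{3,\gamma}\,h^2\|v\|_{3,\gamma}$ via \eqref{eq:mean-zero-interpolation-H1-new}. Finally, compare $(\nabla_h(\Delta_\gamma w)^e,\nabla_hv^e)_h$ with $(\nabla_\gamma\Delta_\gamma w,\nabla_\gamma v)_\gamma=-a(w,v)$. This uses the lifted gradient identity with $R_h$ and \eqref{eq:gradient-transport} (the polarized form of $|v|_{1,K}^2=(R_h\nabla_\gamma v^\ell,\nabla_\gamma v^\ell)$), and gives an $O(h^2\mathcal K_0^2\|w\|_{3,\gamma}\|v\|_{1,\gamma})$ error. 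Hence
\[
\bigl((\Delta_\gamma w)^e,\Delta_hv_h\bigr)_h-a(w,v)=O\bigl(h^2\|w\|_{3,\gamma}\|v\|_{3,\gamma}\bigr).
\]

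\textbf{Step 2 (the $w$-side).} The remaining term is $(\Delta_hw_h-(\Delta_\gamma w)^e,\Delta_hv_h)_h$. Apply the same trick with the roles exchanged. Write
\[
\Delta_hv_h=(\Delta_\gamma v)^e+\bigl(\Delta_hv_h-(\Delta_\gamma v)^e\bigr).
\]
The product of the two defects is $O(h\cdot h)$ by \eqref{eq:mean-zero-interpolation-H2-new}, which is already second order. What is left is $(\Delta_hw_h,(\Delta_\gamma v)^e)_h-((\Delta_\gamma w)^e,(\Delta_\gamma v)^e)_h$. By Step 1 applied with $w$ and $v$ swapped, $(\Delta_hw_h,(\Delta_\gamma v)^e)_h=a(v,w)+O(h^2)$. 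In addition, $((\Delta_\gamma w)^e,(\Delta_\gamma v)^e)_h=a(w,v)+O(h^2\mathcal K_0^2)$ by \eqref{eq:geometry-area}. Combining these with Step 1 gives the claim. The only ingredients are the interpolation bounds, the edge estimate in Lemma~\ref{lem:edge-consistency-new}, and the geometric estimates, and all constants are absorbed into $C_\gamma$.

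\textbf{Main obstacle.} The delicate part is the edge term in Step 1 on a folded mesh. The integration by parts must be arranged so that only $J_{n,e}$, the sum of the two outer conormal derivatives, appears, and not the average $A_{n,e}$ or a jump of $(\Delta_\gamma w)^e$. This holds because the closest-point extension is continuous across edges. One must also make sure that Lemma~\ref{lem:edge-consistency-new} is applied with the $P_1$-orthogonality available for interpolants, which yields $h^2$ rather than $h$. A secondary point is the regularity bookkeeping: $\nabla_h(\Delta_\gamma w)^e$ needs only $w\in H^3$, so every term is controlled by $\|w\|_{3,\gamma}\|v\|_{3,\gamma}$, and the constant-shift invariance of $I_h^0$ permits the zero-mean reductions.
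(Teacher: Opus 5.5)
Your proposal is correct and is essentially the paper's proof. The paper writes $a_h(I_h^0w,I_h^0v)-a(w,v)=(E_w,E_v)_h+(E_w,q_v^e)_h+(q_w^e,E_v)_h+\bigl[(q_w^e,q_v^e)_h-(q_w,q_v)_\gamma\bigr]$ and treats each cross term exactly as in your Step~1 (elementwise integration by parts leaving only $J_{n,e}$, then the $H^1$ interpolation bound, gradient transport, the interpolant edge estimate \eqref{eq:edge-consistency-interpolants-new}, and the area estimate), so your two-step grouping is only a rearrangement of that four-term split.
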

\begin{proof}
	Let $w_I=I_h^0w$, $v_I=I_h^0v$,
	$q_w=\Delta_\gamma w$, $q_v=\Delta_\gamma v$,
	$E_w=\Delta_hw_I-q_w^e$, and $E_v=\Delta_hv_I-q_v^e$.
	By \eqref{eq:mean-zero-interpolation-H2-new},
	\[
	\|E_w\|_{0,h}\leq C_\gamma h\|w\|_{3,\gamma},
	\qquad
	\|E_v\|_{0,h}\leq C_\gamma h\|v\|_{3,\gamma},
	\]
	so $|(E_w,E_v)_h|\leq C_\gamma h^2\|w\|_{3,\gamma}\|v\|_{3,\gamma}$.
	
	For the cross term $(E_w,q_v^e)_h$, cell-wise integration by parts and the identity on closed surfaces
	\[
	(\nabla_\gamma w,\nabla_\gamma q_v)_\gamma
	+(q_w,q_v)_\gamma=0
	\]
	give
	\begin{align*}
		(E_w,q_v^e)_h
		={}&
		-(\nabla_h(w_I-w^e),\nabla_hq_v^e)_h
		+\left[
		(\nabla_\gamma w,\nabla_\gamma q_v)_\gamma
		-(\nabla_hw^e,\nabla_hq_v^e)_h
		\right]\\
		&+\sum_{e\in\cE_h}
		(q_v^e,J_{n,e}(w_I))_e
		+\left[(q_w,q_v)_\gamma-(q_w^e,q_v^e)_h\right].
	\end{align*}
	The first term is controlled by \eqref{eq:mean-zero-interpolation-H1-new}, the second by \eqref{eq:gradient-transport}, the third by \eqref{eq:edge-consistency-interpolants-new}, and the fourth by \eqref{eq:geometry-area}. Consequently,
	\[
	|(E_w,q_v^e)_h|
	\leq C_\gamma h^2\|w\|_{3,\gamma}\|v\|_{3,\gamma}.
	\]
	The identical argument swapping $w$ and $v$ controls $(q_w^e,E_v)_h$.
	Finally,
	\begin{align*}
		a_h(w_I,v_I)-a(w,v)
		=(E_w,E_v)_h+(E_w,q_v^e)_h+(q_w^e,E_v)_h
		+\left[(q_w^e,q_v^e)_h-(q_w,q_v)_\gamma\right].
	\end{align*}
	By \eqref{eq:geometry-area} and the definitions of $q_w$ and $q_v$, the last term is bounded by $C_\gamma h^2\|w\|_{3,\gamma}\|v\|_{3,\gamma}$; previous bounds prove \eqref{eq:bilinear-superconsistency-new}.
\end{proof}

\subsection{Energy-norm and broken $H^1$ error estimates}

\begin{theorem}[Energy-norm error estimate]
	\label{thm:energy-norm-error-new}
	Let $f\in\mathring L^2(\gamma)$, and let $u\in\mathring H^3(\gamma)$ and $u_h\in\mathring V_h$ be the solutions of \eqref{eq:continuous-problem-new} and \eqref{eq:discrete-problem-new}, respectively. Then
	\begin{align}
		\|\Delta_h(I_h^0u-u_h)\|_{0,h}
		&\leq C_\gamma
		h\bigl(\|u\|_{3,\gamma}+\|f\|_{0,\gamma}\bigr)
		\leq C_\gamma h\|f\|_{0,\gamma},
		\label{eq:energy-interpolated-error-new}\\
		\|(\Delta_\gamma u)^e-\Delta_hu_h\|_{0,h}
		+\|D_h^2(u^e-u_h)\|_{0,h}
		&\leq C_\gamma
		h\bigl(\|u\|_{3,\gamma}+\|f\|_{0,\gamma}\bigr)
		\leq C_\gamma h\|f\|_{0,\gamma}.
		\label{eq:energy-physical-error-new}
	\end{align}
\end{theorem}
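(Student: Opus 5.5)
We follow a Strang-type argument in the discrete energy norm. Set $\xi_h:=I_h^0u-u_h\in\mathring V_h$. By \eqref{eq:discrete-problem-new},
\[
\|\Delta_h\xi_h\|_{0,h}^2=a_h(I_h^0u,\xi_h)-l_h(\xi_h).
\]
To connect with the continuous problem, we insert the conforming companion $\widetilde{\mathcal A}_h\xi_h$, whose lift lies in $H^1(\gamma)$ with zero mean up to an $O(h^2)$ area correction. We then split
\[
\|\Delta_h\xi_h\|_{0,h}^2=\bigl[a_h(I_h^0u,\xi_h)-(q^e,\Delta_h\xi_h)_h\bigr]+\bigl[(q^e,\Delta_h\xi_h)_h-l_h(\xi_h)\bigr],
\qquad q:=\Delta_\gamma u .
\]
The first bracket equals $((\Delta_hI_h^0u-q^e),\Delta_h\xi_h)_h$. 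By \eqref{eq:mean-zero-interpolation-H2-new} it is bounded by $C_\gamma h\|u\|_{3,\gamma}\|\Delta_h\xi_h\|_{0,h}$.

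For the second bracket, integrate by parts cellwise:
\[
(q^e,\Delta_h\xi_h)_h=-(\nabla_hq^e,\nabla_h\xi_h)_h+\sum_e(q^e,J_{n,e}(\xi_h))_e+\sum_e([q^e\text{-free}])\dots
\]
Since $q^e$ is single-valued, only the conormal jump term appears. Lemma~\ref{lem:edge-consistency-new}, in the form \eqref{eq:edge-consistency-energy-new}, bounds the edge sum by $C_\gamma h\|u\|_{3,\gamma}\|\Delta_h\xi_h\|_{0,h}$.

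Next, replace $\xi_h$ by $\widetilde{\mathcal A}_h\xi_h$ in the gradient term. The error is $|(\nabla_hq^e,\nabla_h(\xi_h-\widetilde{\mathcal A}_h\xi_h))_h|\le C_\gamma h\|u\|_{3,\gamma}\|\Delta_h\xi_h\|_{0,h}$ by \eqref{eq:companion-approx-new}. Then pass to $\gamma$: \eqref{eq:gradient-transport} turns $(\nabla_hq^e,\nabla_h\widetilde{\mathcal A}_h\xi_h)_h$ into $(\nabla_\gamma q,\nabla_\gamma(\widetilde{\mathcal A}_h\xi_h)^\ell)_\gamma$ with an $O(h^2)$ error, controlled via \eqref{eq:companion-stability-new}. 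Because $u\in H^4(\gamma)$ solves $\Delta_\gamma^2u=f$, Green's formula \eqref{eq:surface-green} gives
\[
-(\nabla_\gamma q,\nabla_\gamma z)_\gamma=(\Delta_\gamma q,z)_\gamma=(f,z)_\gamma=l(z)
\qquad\text{for } z=(\widetilde{\mathcal A}_h\xi_h)^\ell .
\]
The zero-mean requirement is harmless here, since $f$ has zero mean. Finally, \eqref{eq:source-consistency-companion-new} bounds $|l_h(\xi_h)-l(z)|\le C_\gamma h^2\|f\|_{0,\gamma}\|\Delta_h\xi_h\|_{0,h}$.

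Collecting these bounds and dividing by $\|\Delta_h\xi_h\|_{0,h}$ gives \eqref{eq:energy-interpolated-error-new}. The second inequality there follows from \eqref{eq:continuous-regularity-new}.

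For \eqref{eq:energy-physical-error-new}, use the triangle inequality through $I_h^0u$. First,
\[
\|q^e-\Delta_hu_h\|_{0,h}\le\|q^e-\Delta_hI_h^0u\|_{0,h}+\|\Delta_h\xi_h\|_{0,h},
\]
where the first term is handled by \eqref{eq:mean-zero-interpolation-H2-new}. Second,
\[
\|D_h^2(u^e-u_h)\|_{0,h}\le\|D_h^2(u^e-I_h^0u)\|_{0,h}+\|D_h^2\xi_h\|_{0,h},
\]
where the last term is at most $C_{\rm MT}\|\Delta_h\xi_h\|_{0,h}$ by Theorem~\ref{thm:discrete-miranda-talenti-new}, which applies by Lemma~\ref{lem:geometric-admissibility-new}.

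The main obstacle is the bookkeeping in the edge integration by parts on the folded mesh. One must check that the cellwise boundary terms $\sum_K(q^e,\partial_{n_K}\xi_h)_{\partial K}$ regroup exactly into $\sum_e(q^e,J_{n,e}(\xi_h))_e$. This holds because $q^e$ is continuous and $J_{n,e}$ is defined through the element-wise outer conormals, so no Piola identification is needed. One must also make sure that the $H^1$-level replacement by the companion keeps only the $O(h)$ loss.
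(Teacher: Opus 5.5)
Your proof is correct and follows the paper's argument essentially step for step. You use the same splitting into five terms: the interpolation Laplacian error, the conormal-jump edge term controlled by \eqref{eq:edge-consistency-energy-new}, the companion replacement, the gradient-transport term, and the source-consistency term. You then pass to the physical error with the triangle inequality and Theorem~\ref{thm:discrete-miranda-talenti-new}, as the paper does. The only differences are cosmetic. You test directly with $\xi_h$ instead of first bounding the residual for arbitrary $v_h\in\mathring V_h$. You also invoke $u\in H^4(\gamma)$ for Green's formula, whereas the paper needs only $q=\Delta_\gamma u\in H^1(\gamma)$ with the $H^{-1}$--$H^1$ pairing \eqref{eq:surface-green}.
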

\begin{proof}
	Let $u_I=I_h^0u$ and $q=\Delta_\gamma u$.
	For any $v_h\in\mathring V_h$, let $z_h=\widetilde{\mathcal{A}}_h v_h$. Since $q\in H^1(\gamma)$ and $\Delta_\gamma q=f$, cell-wise integration by parts and \eqref{eq:surface-green} give
	\begin{align*}
		a_h(u_I,v_h)-l_h(v_h)
		={}&(\Delta_hu_I-q^e,\Delta_hv_h)_h
		+\sum_{e\in\cE_h}(q^e,J_{n,e}(v_h))_e\\
		&-(\nabla_hq^e,\nabla_h(v_h-z_h))_h\\
		&+\left[(\nabla_\gamma q,\nabla_\gamma z_h^\ell)_\gamma
		-(\nabla_hq^e,\nabla_hz_h)_h\right]\\
		&+l(z_h^\ell)-l_h(v_h).
	\end{align*}
	The first three terms are controlled by \eqref{eq:mean-zero-interpolation-H2-new}, \eqref{eq:edge-consistency-energy-new}, and \eqref{eq:companion-approx-new}, respectively; the fourth by \eqref{eq:companion-stability-new}, \eqref{eq:gradient-transport}, and \eqref{eq:h1-seminorm-equivalence}; the fifth by \eqref{eq:source-consistency-companion-new}. Thus,
	\begin{equation}
		|a_h(u_I,v_h)-l_h(v_h)|
		\leq C_\gamma
		h\bigl(\|u\|_{3,\gamma}+\|f\|_{0,\gamma}\bigr)
		\|\Delta_hv_h\|_{0,h}.
		\label{eq:energy-residual-bound-new}
	\end{equation}
	
	Taking $v_h=e_h:=u_I-u_h$ in \eqref{eq:energy-residual-bound-new} and using the discrete problem yields \eqref{eq:energy-interpolated-error-new}. Equation \eqref{eq:energy-physical-error-new} follows from \eqref{eq:mean-zero-interpolation-H2-new}, the triangle inequality, and Theorem~\ref{thm:discrete-miranda-talenti-new}.
	The final inequality is derived from \eqref{eq:continuous-regularity-new}.
\end{proof}

\begin{theorem}[Broken $H^1$ error estimate]
	\label{thm:broken-H1-error-new}
	Under the assumptions of Theorem~\ref{thm:energy-norm-error-new},
	\begin{equation}
		\|u^e-u_h\|_{1,h}
		\leq C_\gamma h^2
		\bigl(\|u\|_{3,\gamma}+\|f\|_{0,\gamma}\bigr)
		\leq C_\gamma h^2\|f\|_{0,\gamma}.
		\label{eq:broken-H1-error-new}
	\end{equation}
\end{theorem}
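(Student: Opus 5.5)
The plan is a duality (Aubin--Nitsche) argument in the broken $H^1$ seminorm, followed by the discrete Poincaré inequality to recover the $L^2$ part. By Proposition~\ref{thm:global-interpolation-new}, $\|u^e-I_h^0u\|_{1,h}\le C_\gamma h^2\|u\|_{3,\gamma}$. So it suffices to bound $e_h:=I_h^0u-u_h\in\mathring V_h$ in $\|\cdot\|_{1,h}$. By \eqref{eq:uniform-broken-poincare-surface} and the jump bound \eqref{eq:value-jump-basic-new}, we have $\|e_h\|_{0,h}\lesssim |e_h|_{1,h}+h^2\|D_h^2e_h\|_{0,h}$, and the last term is already $O(h^3)$ by Theorem~\ref{thm:energy-norm-error-new} and \eqref{eq:laplace-hessian-equivalence-new}. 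The whole task therefore reduces to showing $|e_h|_{1,h}\le C_\gamma h^2(\|u\|_{3,\gamma}+\|f\|_{0,\gamma})$.

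To control $|e_h|_{1,h}$, I will write $|e_h|_{1,h}^2=-(e_h,\Delta_he_h)_h+E_h(e_h)$ as in \eqref{eq:gradient-ibp-new}. By \eqref{eq:edge-gradient-error-new} and the energy estimate, $|E_h(e_h)|\lesssim h^2\|D_h^2e_h\|^2_{0,h}=O(h^4)$. It then remains to bound $(e_h,\Delta_he_h)_h$ by a dual problem. Let $g:=-\Delta_\gamma((\widetilde{\mathcal A}_he_h)^\ell)\in H^{-1}$. A cleaner choice, which I prefer, is to let $\phi\in\mathring H^3(\gamma)$ solve $\Delta_\gamma^2\phi=\Delta_\gamma z$ (equivalently $\Delta_\gamma\phi=z-\bar z$), where $z=(\widetilde{\mathcal A}_he_h)^\ell$ has zero mean up to $O(h^2)$. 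Elliptic regularity gives $\|\phi\|_{3,\gamma}\le C_\gamma\|z\|_{1,\gamma}\le C_\gamma\|\Delta_he_h\|_{0,h}$ by \eqref{eq:companion-stability-new}. Then
\[
(e_h,\Delta_he_h)_h=(\widetilde{\mathcal A}_he_h,\Delta_he_h)_h+O(h^2)\|\Delta_he_h\|^2_{0,h},
\]
using \eqref{eq:companion-approx-new}, and $(\widetilde{\mathcal A}_he_h,\Delta_he_h)_h\approx a_h(I_h^0\phi,e_h)$ up to consistency terms. These are exactly the terms in the proof of Theorem~\ref{thm:energy-norm-error-new}, with $q=\Delta_\gamma\phi=z$ and the right-hand side replaced by data in $H^1$ rather than $L^2$. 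Each is $O(h)\|\phi\|_{3}\cdot\|\Delta_he_h\|$. Since $\|\Delta_he_h\|=O(h)$, this gives $O(h^2)$ with the right dependence.

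Next I will use Galerkin orthogonality: $a_h(e_h,I_h^0\phi)=a_h(I_h^0u,I_h^0\phi)-l_h(I_h^0\phi)$. By Lemma~\ref{lem:bilinear-superconsistency-new} and \eqref{eq:source-consistency-interpolant-new}, this equals $a(u,\phi)-l(\phi)+O(h^2)\|u\|_3\|\phi\|_3=O(h^2)\|u\|_{3}\|\phi\|_3$, since $a(u,\phi)=l(\phi)$. The remaining difference $a_h(e_h,\Delta\text{-data})-a_h(e_h,I_h^0\phi)=(\Delta_he_h,\Delta_h I_h^0\phi-q^e)_h+\ldots$ is bounded by $\|\Delta_he_h\|\cdot Ch\|\phi\|_3=O(h^2)\|\phi\|_3$ via \eqref{eq:mean-zero-interpolation-H2-new}. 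Collecting everything yields $|e_h|_{1,h}^2\le C_\gamma h^2(\|u\|_3+\|f\|_0)\,|e_h|_{1,h}+O(h^4)$, provided the dual data norm is $\lesssim|e_h|_{1,h}$ rather than $\|\Delta_he_h\|$.

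That last point is the main obstacle. The duality must be set up so that $\|\phi\|_{3,\gamma}\lesssim|e_h|_{1,h}$, and not merely $\|\Delta_he_h\|$, which would only give $O(h)$. I would first try taking $\phi$ to solve $\Delta_\gamma^2\phi=-\Delta_\gamma z$ with $z=(\widetilde{\mathcal A}_he_h)^\ell$, so that $(\Delta_\gamma\phi,\cdot)$ tests $\nabla_\gamma z$. Then $\|\phi\|_{3}\lesssim|z|_{1,\gamma}\lesssim|e_h|_{1,h}+h\|\Delta_he_h\|$ by \eqref{eq:companion-approx-new}. The identity $|z|_1^2=a(\phi,\cdot)$ pairs naturally with $\phi$, and the gap between $|z|_{1}^2$ and $|e_h|_{1,h}^2$ is $O(h)|e_h|_1\|\Delta_he_h\|=O(h^2)|e_h|_{1,h}$. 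The consistency terms then involve $I_h^0z$-type quantities, which are handled by Lemma~\ref{lem:edge-consistency-new} and \eqref{eq:gradient-transport}, and the estimate closes.
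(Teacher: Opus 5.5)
Your proposal is correct in substance. The route you set up in your second and third paragraphs differs from the paper's in its entry point.

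The ``main obstacle'' you flag is self-inflicted. Write $z_h=\widetilde{\mathcal A}_he_h$ and $z=z_h^\ell$. Your preferred $\phi$ with $\Delta_\gamma\phi=z-\bar z$ is, up to sign, the same dual solution as in your last paragraph and in the paper: there $\Delta_\gamma^2\phi=-\Delta_\gamma z$ with zero mean forces $\Delta_\gamma\phi=-(z-\bar z)$. The loss came only from bounding $\|z\|_{1,\gamma}$ by $\|\Delta_he_h\|_{0,h}$ through \eqref{eq:companion-stability-new}. Elliptic regularity for $\Delta_\gamma$, the Poincaré inequality on $\gamma$, \eqref{eq:h1-seminorm-equivalence}, and \eqref{eq:companion-approx-new} give directly $\|\phi\|_{3,\gamma}\lesssim\|z-\bar z\|_{1,\gamma}\lesssim|z|_{1,\gamma}\lesssim|e_h|_{1,h}+h\|\Delta_he_h\|_{0,h}$. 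With this bound your chain closes:
\begin{itemize}
\item In $|e_h|_{1,h}^2=-(e_h,\Delta_he_h)_h+E_h(e_h)$, both $E_h(e_h)$ and $(e_h-z_h,\Delta_he_h)_h$ are $O(h^4)$. This follows from \eqref{eq:edge-gradient-error-new}, \eqref{eq:laplace-hessian-equivalence-new}, \eqref{eq:companion-approx-new}, and \eqref{eq:energy-interpolated-error-new}.
\item With $q_\phi:=\Delta_\gamma\phi$, one has exactly $z_h=q_\phi^e+\bar z$ on $\Gamma_h$. The constant drops because $(1,\Delta_he_h)_h=\sum_e\int_eJ_{n,e}(e_h)\dd s_h=0$.
\item Finally, $(q_\phi^e,\Delta_he_h)_h=(q_\phi^e-\Delta_hI_h^0\phi,\Delta_he_h)_h+a_h(e_h,I_h^0\phi)$. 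This is $O(h^2)(\|u\|_{3,\gamma}+\|f\|_{0,\gamma})\|\phi\|_{3,\gamma}$ by \eqref{eq:mean-zero-interpolation-H2-new}, \eqref{eq:energy-interpolated-error-new}, the discrete equation, Lemma~\ref{lem:bilinear-superconsistency-new}, and \eqref{eq:source-consistency-interpolant-new}.
\end{itemize}
Young's inequality then gives $|e_h|_{1,h}\le C_\gamma h^2(\|u\|_{3,\gamma}+\|f\|_{0,\gamma})$.

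The paper uses the same dual problem but enters differently. It starts from the conforming quantity $|z|_{1,\gamma}^2=-(\nabla_\gamma q_\phi,\nabla_\gamma z)_\gamma$ (with its sign of $\phi$) and transfers it to $\Gamma_h$ with \eqref{eq:gradient-transport}. It then swaps $z_h$ for $e_h$ inside the gradient pairing and integrates by parts elementwise. This produces the same term $(q_\phi^e,\Delta_he_h)_h$ plus an edge term handled by \eqref{eq:edge-consistency-energy-new}. It bounds $|z|_{1,\gamma}$ first and transfers the bound back to $e_h$. Your entry through \eqref{eq:gradient-ibp-new} and \eqref{eq:edge-gradient-error-new} avoids the gradient-transport and edge-consistency steps. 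The only regularity you need is the $H^1\to H^3$ shift for $\Delta_\gamma$, which is what \eqref{eq:dual-regularity-new} amounts to for this data. The paper's version keeps the conventional form of testing the conforming companion.

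Your last paragraph drifts back toward the paper's route, is stated loosely, and is not needed:
\begin{itemize}
\item $|z|_{1,\gamma}^2=a(\phi,\cdot)$ is meaningless because $z\notin H^2(\gamma)$.
\item $I_h^0z$ is undefined for $z\in H^1(\gamma)$.
\end{itemize}
There are also two minor slips:
\begin{itemize}
\item The jump term in the broken Poincaré bound is $h\|D_h^2e_h\|_{0,h}=O(h^2)$, not $h^2\|D_h^2e_h\|_{0,h}=O(h^3)$. This still suffices.
\item The Galerkin-orthogonality residual also carries $\|f\|_{0,\gamma}\|\phi\|_{3,\gamma}$ from \eqref{eq:source-consistency-interpolant-new}.
\end{itemize}
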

\begin{proof}
	Let $u_I=I_h^0u$, $e_h=u_I-u_h$, and $z_h=\widetilde{\mathcal{A}}_h e_h$. The nonconformity bounds and \eqref{eq:energy-interpolated-error-new} give
	\begin{align}
		\|e_h-z_h\|_{0,h}
		&\leq C_\gamma h^3
		\bigl(\|u\|_{3,\gamma}+\|f\|_{0,\gamma}\bigr),
		\label{eq:dual-companion-L2-new}\\
		|e_h-z_h|_{1,h}
		&\leq C_\gamma h^2
		\bigl(\|u\|_{3,\gamma}+\|f\|_{0,\gamma}\bigr).
		\label{eq:dual-companion-H1-new}
	\end{align}
	Moreover, $z_h\in H^1(\Gamma_h)$ and has zero mean. Define $g=-\Delta_\gamma z_h^\ell$ in the distributional sense by:
	\[
	\langle g,\psi\rangle
	=(\nabla_\gamma z_h^\ell,\nabla_\gamma\psi)_\gamma,
	\qquad \psi\in H^1(\gamma).
	\]
	Since $\langle g,1\rangle=0$, we have $g\in\mathring H^{-1}(\gamma)$, where $\mathring H^{-1}(\gamma):=\{\xi\in H^{-1}(\gamma):\langle\xi,1\rangle=0\}$, and $\|\xi\|_{-1,\gamma}
	:=\sup_{0\neq\psi\in\mathring H^1(\gamma)}
	\frac{|\langle\xi,\psi\rangle|}{\|\psi\|_{1,\gamma}}$.
	Let $\phi\in\mathring H^3(\gamma)$ solve $\Delta_\gamma^2\phi=g$.
	The zero-mean $H^{-1}\to H^3$ regularity estimate \cite[section~4.3, eq.~(4.12)]{WuZhou2025} yields
	\begin{equation}
		\|\phi\|_{3,\gamma}
		\leq C_\gamma\|g\|_{-1,\gamma}
		\leq C_\gamma|z_h^\ell|_{1,\gamma}.
		\label{eq:dual-regularity-new}
	\end{equation}
	
	Let $q_\phi=\Delta_\gamma\phi$. Adding and subtracting $(\nabla_hq_\phi^e,\nabla_hz_h)_h$ and integrating elementwise by parts give
	\begin{align*}
		|z_h^\ell|_{1,\gamma}^2
		={}&\bigl[-(\nabla_\gamma q_\phi,\nabla_\gamma z_h^\ell)_\gamma
		+(\nabla_hq_\phi^e,\nabla_hz_h)_h\bigr]
		-(\nabla_hq_\phi^e,\nabla_h(z_h-e_h))_h\\
		&+(q_\phi^e,\Delta_he_h)_h
		-\sum_{e\in\cE_h}(q_\phi^e,J_{n,e}(e_h))_e.
	\end{align*}
	The first term is controlled by \eqref{eq:gradient-transport}, the second by \eqref{eq:h1-seminorm-equivalence} and \eqref{eq:dual-companion-H1-new}.
	For $\phi_I=I_h^0\phi$, the discrete and continuous problems give
	\[
	(q_\phi^e,\Delta_he_h)_h
	=(q_\phi^e-\Delta_h\phi_I,\Delta_he_h)_h
	+\bigl[a_h(u_I,\phi_I)-a(u,\phi)\bigr]
	+\bigl[l(\phi)-l_h(\phi_I)\bigr].
	\]
	These three terms are controlled by \eqref{eq:mean-zero-interpolation-H2-new} and \eqref{eq:energy-interpolated-error-new}, \eqref{eq:bilinear-superconsistency-new}, and \eqref{eq:source-consistency-interpolant-new}, respectively. The edge jump term is controlled by \eqref{eq:edge-consistency-energy-new} and \eqref{eq:energy-interpolated-error-new}. Hence, by \eqref{eq:dual-regularity-new},
	\[
	\begin{aligned}
		|z_h^\ell|_{1,\gamma}^2
		&\leq C_\gamma h^2\|\phi\|_{3,\gamma}
		\left[|z_h^\ell|_{1,\gamma}
		+\|u\|_{3,\gamma}+\|f\|_{0,\gamma}\right]\\
		&\leq C_\gamma h^2\left[
		|z_h^\ell|_{1,\gamma}^2
		+|z_h^\ell|_{1,\gamma}
		\bigl(\|u\|_{3,\gamma}+\|f\|_{0,\gamma}\bigr)\right].
	\end{aligned}
	\]
	For sufficiently small $h$, absorption and cancellation (with the zero seminorm case being trivial) give
	\[
	|z_h^\ell|_{1,\gamma}
	\leq C_\gamma h^2
	\bigl(\|u\|_{3,\gamma}+\|f\|_{0,\gamma}\bigr).
	\]
	Equations \eqref{eq:h1-seminorm-equivalence} and \eqref{eq:dual-companion-H1-new} transfer this bound to $|e_h|_{1,h}$; the zero-mean property of $z_h$, \eqref{eq:uniform-poincare-discrete-surface}, and \eqref{eq:dual-companion-L2-new} provide the corresponding $L^2$ bound.
	Combining these estimates with \eqref{eq:mean-zero-interpolation-H1-new} proves \eqref{eq:broken-H1-error-new}. The final inequality is derived from \eqref{eq:continuous-regularity-new}.
\end{proof}

\begin{remark}[Energy norm and geometric consistency]
	The natural energy norm of \eqref{eq:discrete-problem-new} is $\|\Delta_hv_h\|_{0,h}$. The estimate \eqref{eq:discrete-MT-new} provides broken $H^2$ control, so no jump or stabilization seminorm enters this norm. In the variational consistency analysis, the geometric perturbations other than the direct Laplacian transport error \eqref{eq:laplace-geometric-error} are of order $h^2$. They arise from the area and tangential-gradient transport estimates \eqref{eq:geometry-area} and \eqref{eq:gradient-transport}, as reflected in the second-order consistency bound \eqref{eq:bilinear-superconsistency-new}. The proved first-order energy bound contains both the finite element approximation error from \eqref{eq:local-interpolation-new} based on quadratic reproduction and the Laplacian transport error from \eqref{eq:laplace-geometric-error}. Equation \eqref{eq:broken-H1-error-new} yields second-order $L^2$ convergence via the full broken $H^1$ estimate. Higher-order surface approximations can reduce geometric errors \cite{Demlow2009}, but for the current space, third-order $L^2$ estimates would require additional approximation or dual consistency arguments beyond those used here.
\end{remark}

\section{Numerical Experiments}
\label{sec:numerical-experiments}

The first three benchmark examples test the predicted first-order energy-norm convergence and second-order broken $H^1$ and $L^2$ convergence using the geometries and exact solutions in \cite{Dziuk1988,OlshanskiiReuskenGrande2009,WuZhou2025}. Sections~\ref{subsec:numerics-mt}--\ref{subsec:numerics-angle} examine the discrete Miranda--Talenti constants and angle defect scaling on the same meshes and their perturbations. For the convergence tests, element, load, and error integrals are evaluated in MATLAB using Dunavant's 27-point quadrature rule \cite{Dunavant1985}. The sphere and torus meshes are generated independently by Gmsh \cite{GeuzaineRemacle2009}. The implicit surface mesh construction is described in Section~\ref{subsec:numerics-implicit}. We report the number of degrees of freedom (DoFs) along with the following mesh sizes:
\[
h_{\max}:=h=\max_{K\in\mathcal T_h}\operatorname{diam}(K),
\qquad
h_{\mathrm{mean}}
:=\frac{1}{3|\mathcal T_h|}
\sum_{K\in\mathcal T_h}\sum_{e\subset\partial K}|e|,
\]
where paired entries $r_{\max}/r_{\mathrm{mean}}$ give the experimental convergence rates computed using $h_{\max}$ and $h_{\mathrm{mean}}$, respectively. Here, $h_{\mathrm{mean}}$ serves merely as a diagnostic indicator for independently generated meshes, whereas $h_{\max}$ is the analytical mesh parameter. In the convergence tests, the zero-mean condition is enforced by a Lagrange multiplier, whereas in spectral diagnostics it is imposed by an equivalent constant quotient space formulation. For the first two examples, we compute
\[
E_0:=\|u^e-u_h\|_{0,h},\qquad
E_1:=\|u^e-u_h\|_{1,h},\qquad
E_\Delta:=\|(\Delta_\gamma u)^e-\Delta_hu_h\|_{0,h}.
\]

\subsection{Example 1: Unit sphere}
\label{subsec:numerics-sphere}

On the unit sphere $\gamma=\{(x,y,z)\in\mathbb R^3:x^2+y^2+z^2=1\}$, we take the cubic spherical harmonic $u=3x^2y-y^3$, for which $f=\Delta_\gamma^2u=144u$. Figure~\ref{fig:b4-sphere-solution} displays the numerical solution, and Table~\ref{tab:b4-sphere-errors} shows that on refined meshes, both $E_0$ and $E_1$ achieve second-order convergence and $E_\Delta$ achieves first-order convergence with respect to both mesh-size measures, in agreement with the theoretical analysis.

\begin{figure}[htbp]
	\centering
	\includegraphics[
	width=0.48\textwidth,
	alt={Triangulated surface plot of the numerical solution on the unit
		sphere. Six alternating positive and negative lobes surround the equator,
		separated by nodal curves, with values ranging approximately from -1 to 1.}]
	{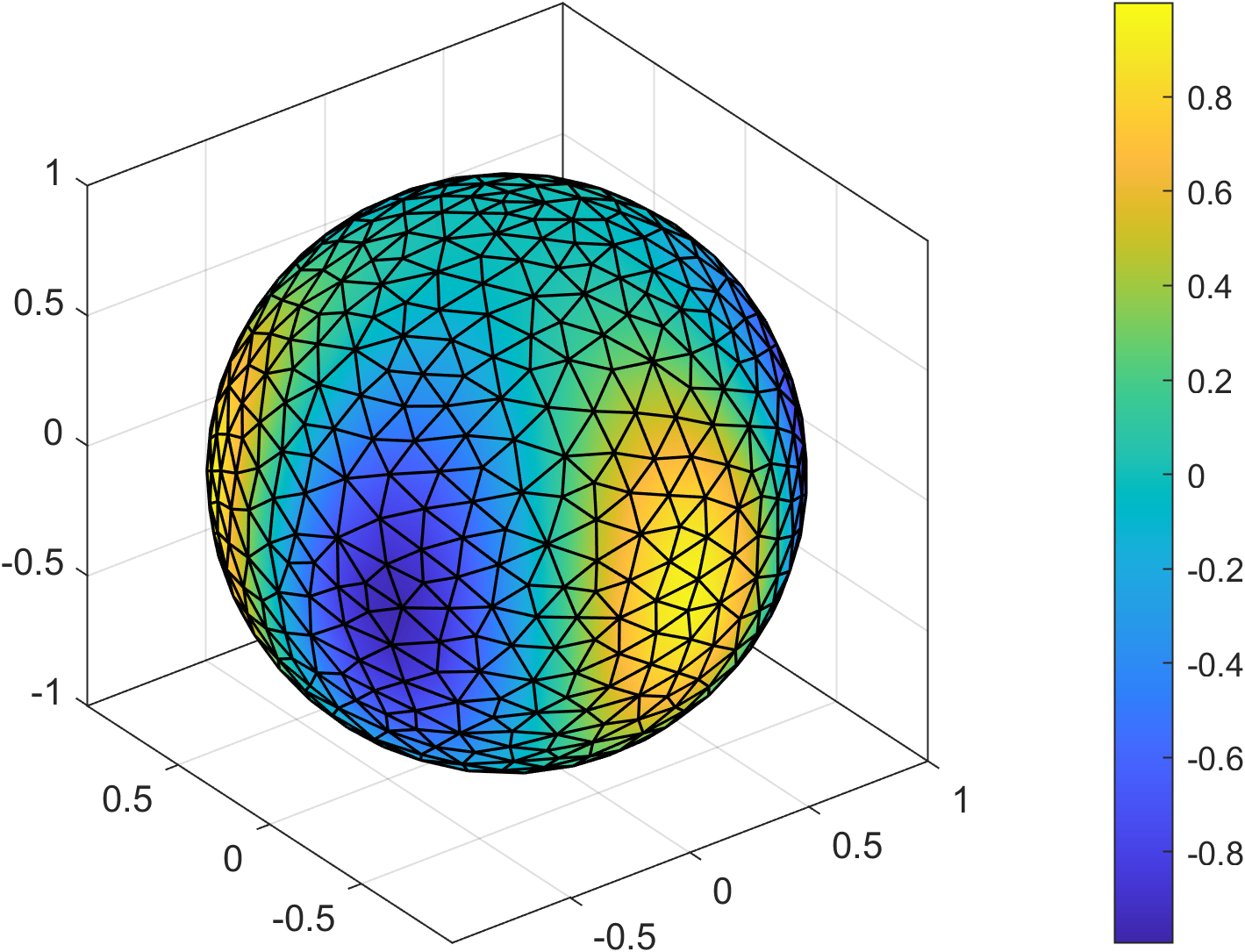}
	\caption{Numerical approximation of $u=3x^2y-y^3$ on the unit sphere with $6632$ degrees of freedom.}
	\label{fig:b4-sphere-solution}
\end{figure}

\begin{table}[htbp]
	\centering
	\caption{Errors and experimental convergence rates for the unit sphere example.}
	\label{tab:b4-sphere-errors}
	\small
	\setlength{\tabcolsep}{4pt}
	\resizebox{\textwidth}{!}{%
		\begin{tabular}{rrrrrrrrr}
			\toprule
			\multicolumn{1}{c}{DoFs} & \multicolumn{1}{c}{$h_{\max}$}
			& \multicolumn{1}{c}{$h_{\mathrm{mean}}$}
			& \multicolumn{1}{c}{$E_0$} & \multicolumn{1}{c}{$r_{\max}/r_{\mathrm{mean}}$}
			& \multicolumn{1}{c}{$E_1$} & \multicolumn{1}{c}{$r_{\max}/r_{\mathrm{mean}}$}
			& \multicolumn{1}{c}{$E_\Delta$} & \multicolumn{1}{c}{$r_{\max}/r_{\mathrm{mean}}$} \\
			\midrule
			1572 & $3.976\mathrm e{-01}$ & $3.043\mathrm e{-01}$
			& $2.1878\mathrm e{-01}$ & --/--
			& $8.0849\mathrm e{-01}$ & --/--
			& $6.0269\mathrm e{+00}$  & --/-- \\
			6632 & $2.130\mathrm e{-01}$ & $1.492\mathrm e{-01}$
			& $5.1633\mathrm e{-02}$ & 2.313/2.026
			& $1.9529\mathrm e{-01}$ & 2.276/1.993
			& $2.9319\mathrm e{+00}$  & 1.154/1.011 \\
			26432 & $1.110\mathrm e{-01}$ & $7.482\mathrm e{-02}$
			& $1.3545\mathrm e{-02}$ & 2.054/1.939
			& $5.1620\mathrm e{-02}$ & 2.042/1.928
			& $1.5136\mathrm e{+00}$  & 1.015/0.958 \\
			105742 & $5.590\mathrm e{-02}$ & $3.743\mathrm e{-02}$
			& $3.3488\mathrm e{-03}$ & 2.037/2.017
			& $1.2833\mathrm e{-02}$ & 2.028/2.009
			& $7.5236\mathrm e{-01}$ & 1.019/1.009 \\
			420952 & $2.799\mathrm e{-02}$ & $1.876\mathrm e{-02}$
			& $8.3374\mathrm e{-04}$ & 2.011/2.013
			& $3.1921\mathrm e{-03}$ & 2.012/2.014
			& $3.7497\mathrm e{-01}$ & 1.007/1.008 \\
			\bottomrule
	\end{tabular}}
\end{table}

\subsection{Example 2: Torus}
\label{subsec:numerics-torus}

Consider a torus with major radius $R=1$ and minor radius $r=0.6$, parameterized by
\[
x=(R+r\cos\theta)\cos\varphi,
\qquad
y=(R+r\cos\theta)\sin\varphi,
\qquad
z=r\sin\theta,
\]
where $0\leq\theta,\varphi<2\pi$. The exact solution is $u(\theta,\varphi)=\sin(3\varphi)\cos(3\theta+\varphi)$, with both $\Delta_\gamma u$ and $f=\Delta_\gamma^2u$ computed analytically. Figure~\ref{fig:b4-torus-solution} illustrates the more oscillatory numerical solution. Table~\ref{tab:b4-torus-errors} shows preasymptotic behavior on the coarser meshes. On the finer meshes, the results are broadly consistent with second-order convergence for $E_0$ and $E_1$ and first-order convergence for $E_\Delta$. The final fluctuation in $E_0$ may be related to local differences between independently generated meshes.

\begin{figure}[htbp]
	\centering
	\includegraphics[
	width=0.48\textwidth,
	alt={Triangulated surface plot of the numerical solution on the torus.
		Alternating positive and negative patches extend around both angular
		directions, with values ranging approximately from -1 to 1.}]
	{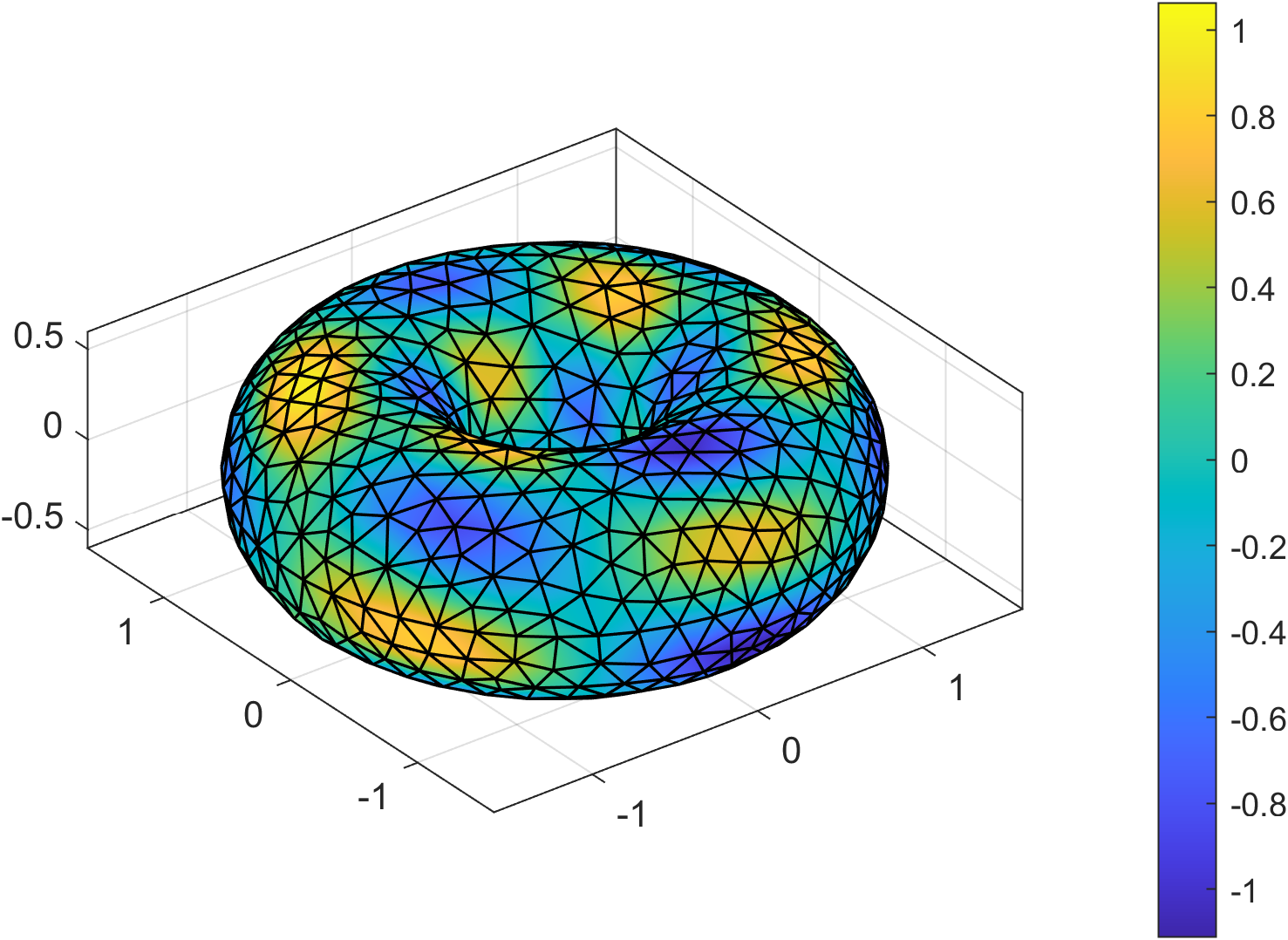}
	\caption{Numerical approximation of $u=\sin(3\varphi)\cos(3\theta+\varphi)$ on the torus with $7710$ degrees of freedom.}
	\label{fig:b4-torus-solution}
\end{figure}

\begin{table}[htbp]
	\centering
	\caption{Errors and experimental convergence rates for the torus example.}
	\label{tab:b4-torus-errors}
	\small
	\setlength{\tabcolsep}{4pt}
	\resizebox{\textwidth}{!}{%
		\begin{tabular}{rrrrrrrrr}
			\toprule
			\multicolumn{1}{c}{DoFs} & \multicolumn{1}{c}{$h_{\max}$}
			& \multicolumn{1}{c}{$h_{\mathrm{mean}}$}
			& \multicolumn{1}{c}{$E_0$} & \multicolumn{1}{c}{$r_{\max}/r_{\mathrm{mean}}$}
			& \multicolumn{1}{c}{$E_1$} & \multicolumn{1}{c}{$r_{\max}/r_{\mathrm{mean}}$}
			& \multicolumn{1}{c}{$E_\Delta$} & \multicolumn{1}{c}{$r_{\max}/r_{\mathrm{mean}}$} \\
			\midrule
			1970 & $5.379\mathrm e{-01}$ & $3.763\mathrm e{-01}$
			& $4.0780\mathrm e{+00}$  & --/--
			& $8.8585\mathrm e{+00}$  & --/--
			& $6.6924\mathrm e{+01}$  & --/-- \\
			7710 & $2.699\mathrm e{-01}$ & $1.904\mathrm e{-01}$
			& $6.4589\mathrm e{-01}$ & 2.672/2.705
			& $2.9550\mathrm e{+00}$  & 1.592/1.612
			& $4.4990\mathrm e{+01}$  & 0.576/0.583 \\
			31070 & $1.387\mathrm e{-01}$ & $9.479\mathrm e{-02}$
			& $1.6578\mathrm e{-01}$ & 2.042/1.950
			& $9.1423\mathrm e{-01}$ & 1.762/1.682
			& $2.5152\mathrm e{+01}$  & 0.873/0.834 \\
			124870 & $6.945\mathrm e{-02}$ & $4.730\mathrm e{-02}$
			& $4.2158\mathrm e{-02}$ & 1.980/1.970
			& $2.3224\mathrm e{-01}$ & 1.982/1.971
			& $1.2753\mathrm e{+01}$  & 0.982/0.977 \\
			496630 & $3.500\mathrm e{-02}$ & $2.372\mathrm e{-02}$
			& $8.0173\mathrm e{-03}$ & 2.422/2.405
			& $5.8739\mathrm e{-02}$ & 2.006/1.992
			& $6.4586\mathrm e{+00}$  & 0.993/0.986 \\
			\bottomrule
	\end{tabular}}
\end{table}

\subsection{Example 3: General implicit surface}
\label{subsec:numerics-implicit}

The third example is set on the zero level set $\gamma=\{(x,y,z)\in\mathbb R^3:\phi(x,y,z)=0\}$, where $\phi(x,y,z)=(x-z^2)^2+y^2+z^2-1$. The exact solution is $u=y$, and $f=\Delta_\gamma^2u$; analytical expressions for $\Delta_\gamma u$ and $f$ are generated beforehand via symbolic computation.

At each level, isosurfaces from independent Cartesian grids are projected onto $\gamma$, processed via tangential smoothing with reprojection, remeshed by Gmsh, and projected once more. For a point $X$, its closest point $Y=p(X)$ is computed by solving the KKT system $Y-X+\lambda\nabla\phi(Y)=0, \phi(Y)=0$ via damped Newton iteration, where $\lambda$ is a Lagrange multiplier; see \cite[section~4.2]{DemlowDziuk2007}. The same projection is applied to the quadrature nodes. Once the mesh is constructed, the assembly of the stiffness matrix proceeds exactly as on a planar triangulation. The minimum angles of the five final meshes range within $[19.57^\circ,31.67^\circ]$, and nodal level-set residuals are below $5\times10^{-13}$, indicating satisfactory mesh quality and nodal projection accuracy.

Since $\phi$ is not a signed distance function, the Hessian of the associated signed distance is not readily available explicitly. Therefore, following \cite[section~5.3]{WuZhou2025}, we report $E_0$, $E_\Delta$, and
\begin{align*}
	E_1^\star
	&:=\|P_h(\nabla_\gamma u)^e-\nabla_hu_h\|_{0,h},\\
	\left|E_1^\star-|u^e-u_h|_{1,h}\right|
	&\leq\|P_h(\nabla_\gamma u)^e-\nabla_hu^e\|_{0,h}
	\lesssim h^2\mathcal K_0^2\|u\|_{1,\gamma},
\end{align*}
where $P_h$ is the element tangential projection. The identity $\nabla_hu^e=P_h(P-dH)(\nabla_\gamma u)^e$ shows that $E_0$ and $E_1^\star$ together test the second-order estimate in the full broken $H^1$ norm. Figure~\ref{fig:b4-implicit-solution} depicts the numerical solution, while Table~\ref{tab:b4-implicit-errors} reports overall second-order convergence for $E_0$ and $E_1^\star$ and first-order convergence for $E_\Delta$. Relatively stable $r_{\mathrm{mean}}$ values indicate that fluctuations in $r_{\max}$ may be related to a few exceptionally large elements in the independent non-nested meshes; the overall error decay remains consistent with the analysis.

\begin{figure}[htbp]
	\centering
	\includegraphics[
	width=0.48\textwidth,
	alt={Triangulated surface plot of the numerical solution on the asymmetric
		implicit surface. The values change from negative on the y-negative side to
		positive on the y-positive side, with a zero-level band near y equals zero
		and a range of approximately -1 to 1.}]
	{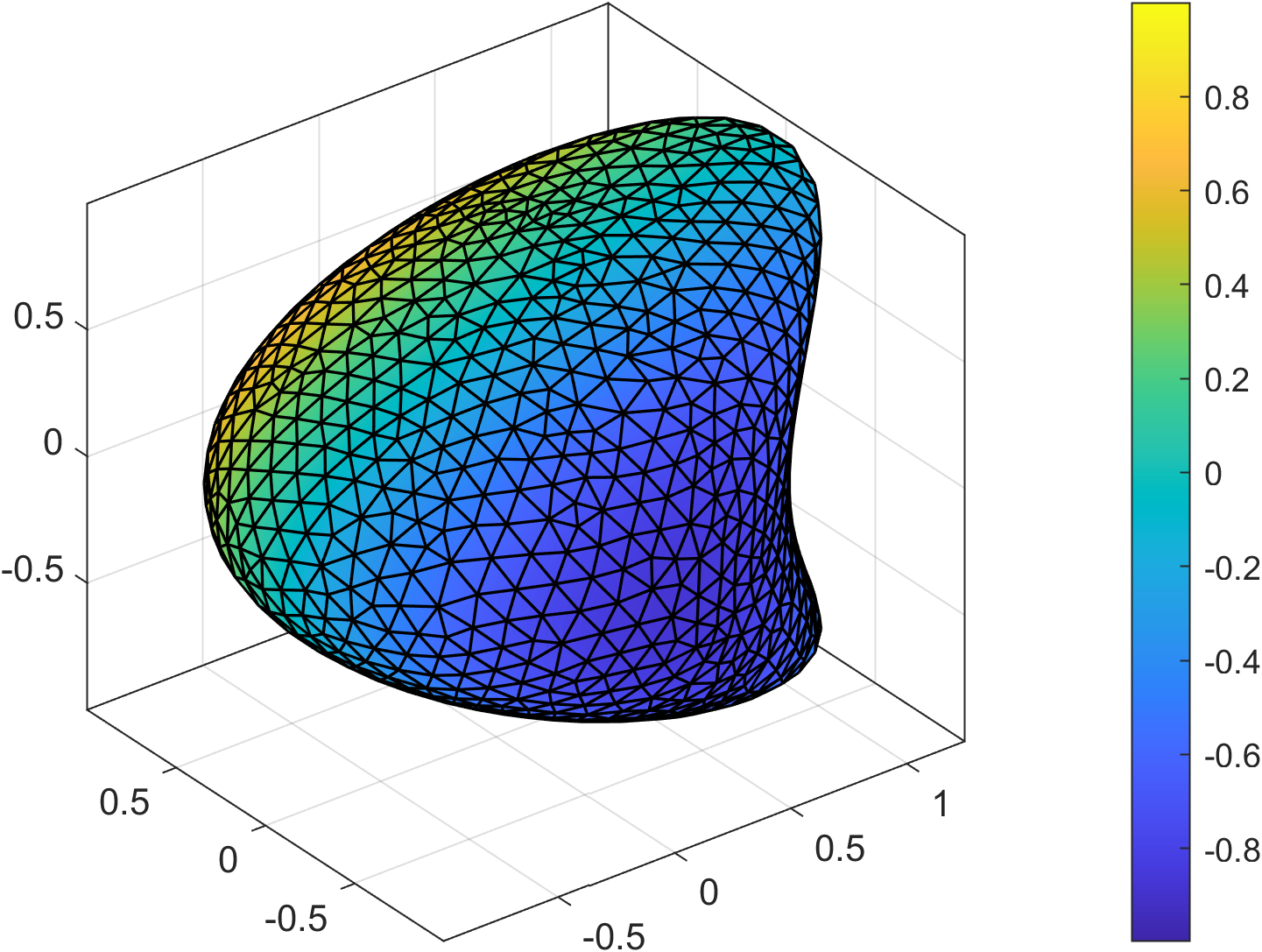}
	\caption{Numerical approximation of $u=y$ on the implicit surface with $11982$ degrees of freedom.}
	\label{fig:b4-implicit-solution}
\end{figure}

\begin{table}[htbp]
	\centering
	\caption{Errors and experimental convergence rates for the implicit surface example.}
	\label{tab:b4-implicit-errors}
	\small
	\setlength{\tabcolsep}{4pt}
	\resizebox{\textwidth}{!}{%
		\begin{tabular}{rrrrrrrrr}
			\toprule
			\multicolumn{1}{c}{DoFs} & \multicolumn{1}{c}{$h_{\max}$}
			& \multicolumn{1}{c}{$h_{\mathrm{mean}}$}
			& \multicolumn{1}{c}{$E_0$} & \multicolumn{1}{c}{$r_{\max}/r_{\mathrm{mean}}$}
			& \multicolumn{1}{c}{$E_1^\star$} & \multicolumn{1}{c}{$r_{\max}/r_{\mathrm{mean}}$}
			& \multicolumn{1}{c}{$E_\Delta$} & \multicolumn{1}{c}{$r_{\max}/r_{\mathrm{mean}}$} \\
			\midrule
			2972 & $3.415\mathrm e{-01}$ & $2.302\mathrm e{-01}$
			& $1.2111\mathrm e{+00}$  & --/--
			& $1.7894\mathrm e{+00}$  & --/--
			& $3.3958\mathrm e{+00}$  & --/-- \\
			11982 & $1.855\mathrm e{-01}$ & $1.149\mathrm e{-01}$
			& $2.9888\mathrm e{-01}$ & 2.294/2.015
			& $4.4110\mathrm e{-01}$ & 2.296/2.017
			& $1.3468\mathrm e{+00}$  & 1.516/1.332 \\
			46172 & $8.249\mathrm e{-02}$ & $5.853\mathrm e{-02}$
			& $6.3365\mathrm e{-02}$ & 1.914/2.298
			& $9.6578\mathrm e{-02}$ & 1.874/2.251
			& $6.5433\mathrm e{-01}$ & 0.891/1.070 \\
			183392 & $4.902\mathrm e{-02}$ & $2.935\mathrm e{-02}$
			& $1.6248\mathrm e{-02}$ & 2.615/1.971
			& $2.4782\mathrm e{-02}$ & 2.614/1.970
			& $3.4480\mathrm e{-01}$ & 1.231/0.928 \\
			746002 & $2.276\mathrm e{-02}$ & $1.460\mathrm e{-02}$
			& $4.0736\mathrm e{-03}$ & 1.803/1.981
			& $6.2175\mathrm e{-03}$ & 1.802/1.980
			& $1.7083\mathrm e{-01}$ & 0.915/1.006 \\
			\bottomrule
	\end{tabular}}
\end{table}

% Diagnostics from saved runs 20261012_134740 (MT) and 20261012_154856 (angles).
% Tables include only qualified MT eigenpairs; geometry uses all 60 meshes.

\FloatBarrier
\subsection{Discrete Miranda--Talenti constants}
\label{subsec:numerics-mt}

We reuse all five meshes from each of the first three examples and test each original mesh along with three perturbed meshes obtained by sampling vertex displacements within a tangential disk of radius $0.05h_z$, with $h_z$ evaluated on the original mesh, and projecting them onto the smooth surface. The connectivity and degrees of freedom remain unchanged. All $60$ meshes pass the geometric check, with minimum angles exceeding $18^\circ$. We measure the optimal constant as
\begin{equation*}
	C_{\mathrm{MT},h}
	:=\sup_{0\ne v_h\in\mathring V_h}
	\frac{\|D_h^2v_h\|_{0,h}}{\|\Delta_hv_h\|_{0,h}}.
\end{equation*}
Proposition~\ref{prop:global-harmonic-kernel} ensures that $C_{\mathrm{MT},h}$ is finite on each fixed mesh; this experiment examines its behavior under refinement in relation to \eqref{eq:laplace-hessian-equivalence-new}.

We compute $C_{\mathrm{MT},h}^{2}$ as the largest generalized eigenvalue of the Hessian and Laplacian forms on the constant quotient space, equivalently on $\mathring V_h$, without regularization. Both forms are integrated using a positive 49-point, degree-12 Gauss--Duffy quadrature rule. A computation is accepted if the relative eigenpair residuals are at most~$10^{-8}$~in both the original and Cholesky-transformed systems, the relative mean residuals are below $10^{-10}$, and the relative cell-wise Rayleigh quotient differences are below $10^{-7}$. Rechecking selected accepted results with degree-14 quadrature and stricter solver tolerances yields relative changes of less than $5.6\times10^{-10}$.

Table~\ref{tab:numerics-mt} and Figure~\ref{fig:numerics-mt} report the first four mesh sizes for each surface, for which all $48$ computations are accepted. The table lists the original-mesh values and the minimum-to-maximum ranges over the three perturbed meshes; $\theta_{\min}$ denotes the minimum triangle angle of the original mesh. In the figure, dashed circles and lines show the original-mesh values, while bars show the perturbation ranges at the corresponding original mesh size $h_{\max}$. The computed constants lie between $1.15326$ and $1.28695$, and perturbations change the original-mesh values by less than $0.21\%$. These results are consistent with the discrete estimate \eqref{eq:laplace-hessian-equivalence-new} over the tested refinement range.
On the finest meshes, seven computations fail the accuracy checks. The finest-mesh results for all three surfaces are omitted from Table~\ref{tab:numerics-mt} and Figure~\ref{fig:numerics-mt}.

\begin{table}[htbp]
	\centering
	\caption{Discrete Miranda--Talenti constants.}
	\label{tab:numerics-mt}
	\small
	\setlength{\tabcolsep}{6pt}
	\begin{tabular}{lrrrrl}
		\toprule
		\multicolumn{1}{c}{Surface} & \multicolumn{1}{c}{DoFs}
		& \multicolumn{1}{c}{$h_{\max}$} & \multicolumn{1}{c}{$\theta_{\min}$}
		& \multicolumn{1}{c}{$C_{\mathrm{MT},h}$} & \multicolumn{1}{c}{Perturbation range} \\
		\midrule
		Sphere & 1572 & 0.3976 & $30.1^\circ$ & 1.15377 & [1.15326, 1.15494] \\
		& 6632 & 0.2130 & $30.6^\circ$ & 1.16611 & [1.16486, 1.16683] \\
		& 26432 & 0.1110 & $30.4^\circ$ & 1.16838 & [1.16730, 1.16815] \\
		& 105742 & 0.0559 & $29.0^\circ$ & 1.17494 & [1.17445, 1.17559] \\
		\midrule
		Torus & 1970 & 0.5379 & $32.4^\circ$ & 1.19390 & [1.19186, 1.19539] \\
		& 7710 & 0.2699 & $26.5^\circ$ & 1.24497 & [1.24469, 1.24504] \\
		& 31070 & 0.1387 & $26.8^\circ$ & 1.27734 & [1.27724, 1.27740] \\
		& 124870 & 0.0695 & $28.1^\circ$ & 1.28695 & [1.28693, 1.28694] \\
		\midrule
		Implicit surface & 2972 & 0.3415 & $28.2^\circ$ & 1.15978 & [1.15889, 1.15992] \\
		& 11982 & 0.1855 & $31.7^\circ$ & 1.16462 & [1.16222, 1.16370] \\
		& 46172 & 0.0825 & $20.2^\circ$ & 1.16667 & [1.16506, 1.16587] \\
		& 183392 & 0.0490 & $24.3^\circ$ & 1.16644 & [1.16495, 1.16678] \\
		\bottomrule
	\end{tabular}
\end{table}

\begin{figure}[htbp]
	\centering
	\includegraphics[width=0.66\textwidth,
	alt={Computed discrete Miranda--Talenti constants on the first four mesh sizes
		of the sphere, torus, and implicit surface. Dashed circles and lines show
		original meshes; solid bars show the range over three accepted perturbations.
		All displayed groups satisfy the accuracy checks, and refinement proceeds
		from left to right.}]
	{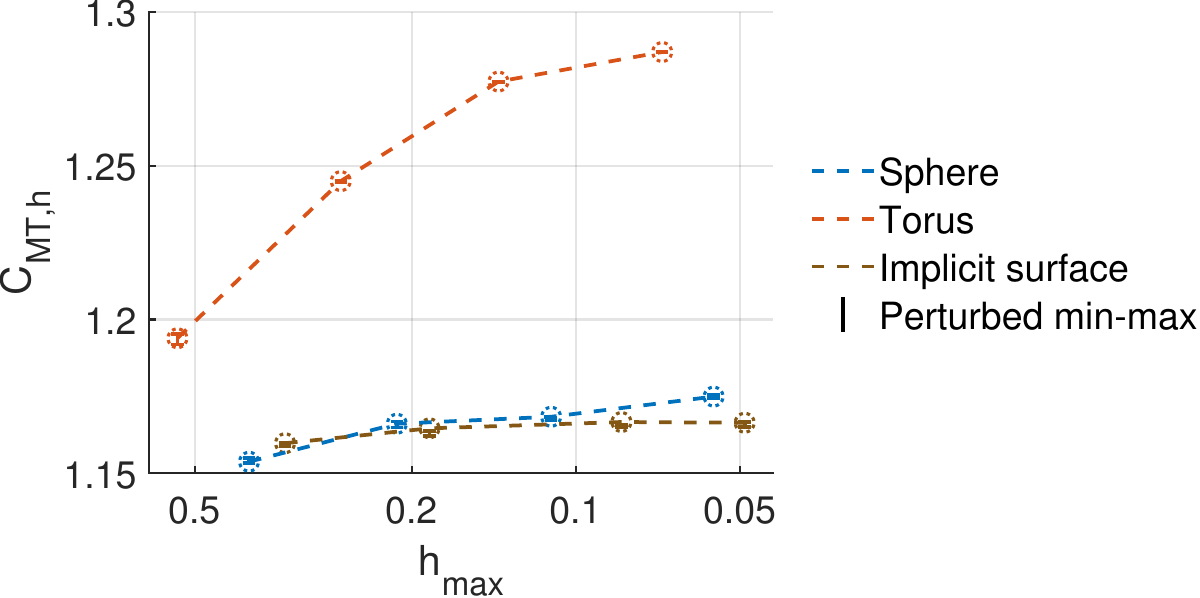}
	\caption{Discrete Miranda--Talenti constants under mesh refinement.}
	\label{fig:numerics-mt}
\end{figure}

\FloatBarrier
\subsection{Angle defect scaling}
\label{subsec:numerics-angle}

On all $60$ meshes from Section~\ref{subsec:numerics-mt}, regardless of eigenpair accuracy, we compute
\begin{equation*}
	\Lambda_{\kappa,h}:=\max_{z\in\cV_h}\frac{|\kappa_z|}{h_z^2},
	\qquad
	K_{\kappa,h}:=\max_{z\in\cV_h}|\kappa_z|,
\end{equation*}
where $\kappa_z$ comes from \eqref{eq:angle-defect-definition-new} and $h_z$, as defined in Section~\ref{sec:preliminaries}, is evaluated on each mesh. We use $\Lambda_{\kappa,h}$ to examine the discrete curvature scale appearing in \eqref{eq:abstract-angle-defect-bound}, rather than to estimate the pointwise Gaussian curvature.

Table~\ref{tab:numerics-angle} lists the original-mesh diagnostics and the perturbation ranges of $\Lambda_{\kappa,h}$. Figure~\ref{fig:numerics-angle} uses the same markers and range bars as Figure~\ref{fig:numerics-mt}, plotted against the original mesh size $h_{\max}$. Including perturbations, the normalized defects on the sphere, torus, and implicit surface remain below $0.81$, $4.0$, and $20.1$, respectively (Figure~\ref{fig:numerics-angle}(a)). The original sphere and torus values remain within relatively narrow ranges, whereas the original implicit surface values rise from $8.10$ to $20.03$ without reaching a plateau. The absolute defects decrease across all three surfaces (Figure~\ref{fig:numerics-angle}(b)), consistent with Lemma~\ref{lem:geometric-admissibility-new} over the measured mesh sizes. The finite-mesh tests in Sections~\ref{subsec:numerics-mt}--\ref{subsec:numerics-angle} do not by themselves establish uniform bounds for $C_{\mathrm{MT},h}$ and $\Lambda_{\kappa,h}$ under further refinement.

\begin{table}[htbp]
	\centering
	\caption{Angle defect diagnostics.}
	\label{tab:numerics-angle}
	\small
	\setlength{\tabcolsep}{9pt}
	\begin{tabular}{lrrcr}
		\toprule
		\multicolumn{1}{c}{Surface} & \multicolumn{1}{c}{DoFs}
		& \multicolumn{1}{c}{$\Lambda_{\kappa,h}$}
		& \multicolumn{1}{c}{Perturbation range of $\Lambda_{\kappa,h}$}
		& \multicolumn{1}{c}{$K_{\kappa,h}$} \\
		\midrule
		Sphere & 1572 & 0.778 & [0.730, 0.789] & $1.064\times10^{-1}$ \\
		& 6632 & 0.767 & [0.744, 0.779] & $2.845\times10^{-2}$ \\
		& 26432 & 0.801 & [0.771, 0.802] & $7.646\times10^{-3}$ \\
		& 105742 & 0.785 & [0.776, 0.805] & $2.076\times10^{-3}$ \\
		& 420952 & 0.804 & [0.798, 0.800] & $5.413\times10^{-4}$ \\
		\midrule
		Torus & 1970 & 3.380 & [3.181, 3.362] & $7.644\times10^{-1}$ \\
		& 7710 & 3.290 & [3.225, 3.531] & $1.667\times10^{-1}$ \\
		& 31070 & 3.856 & [3.673, 3.963] & $5.823\times10^{-2}$ \\
		& 124870 & 3.700 & [3.660, 3.697] & $1.437\times10^{-2}$ \\
		& 496630 & 3.642 & [3.611, 3.797] & $4.241\times10^{-3}$ \\
		\midrule
		Implicit surface & 2972 & 8.100 & [7.831, 8.491] & $7.749\times10^{-1}$ \\
		& 11982 & 14.342 & [12.967, 13.409] & $2.351\times10^{-1}$ \\
		& 46172 & 17.191 & [16.763, 17.376] & $6.693\times10^{-2}$ \\
		& 183392 & 18.061 & [17.539, 18.232] & $1.967\times10^{-2}$ \\
		& 746002 & 20.035 & [18.881, 19.445] & $4.967\times10^{-3}$ \\
		\bottomrule
	\end{tabular}
\end{table}

\begin{figure}[htbp]
	\centering
	\includegraphics[width=0.88\textwidth,
	alt={Angle-defect diagnostics on five mesh sizes per surface with a shared
		legend. Panel (a) shows normalized defects near 0.8 on the sphere, between
		3 and 4 on the torus, and rising from about 8 to 20 on the implicit surface.
		Panel (b) shows decreasing maximum absolute defects on logarithmic axes.
		Dashed circles and lines show original-mesh values; bars show ranges over
		the three perturbed meshes at the original mesh size.}]
	{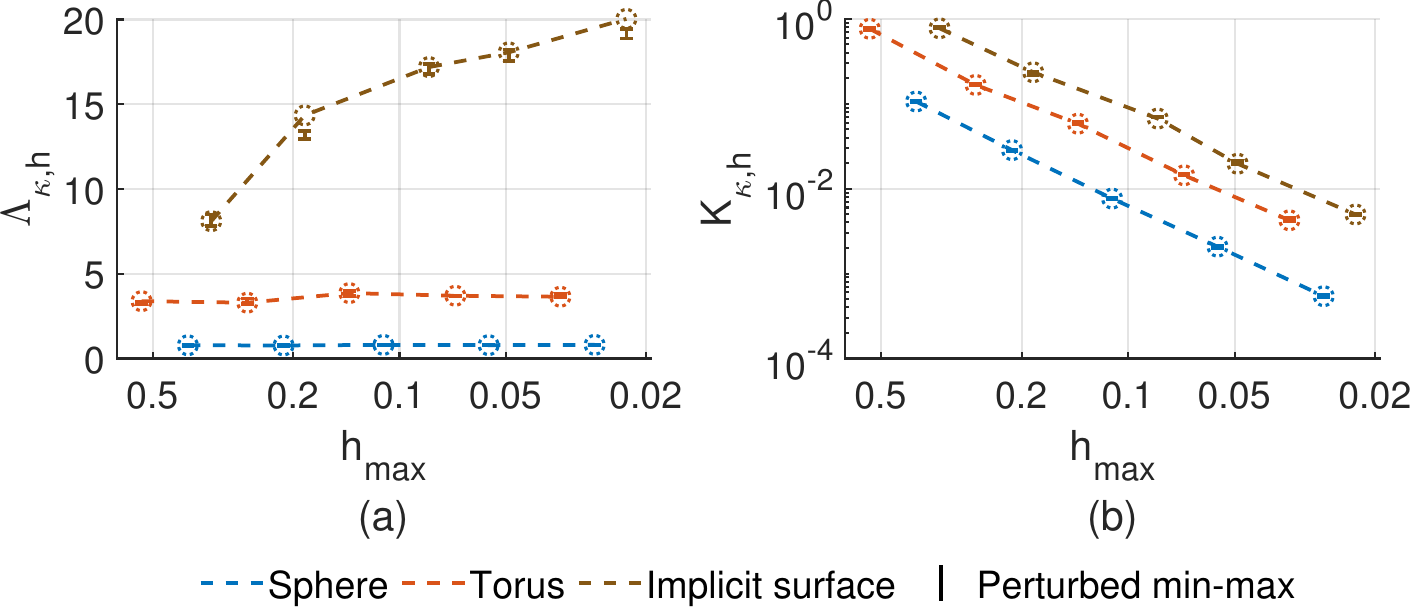}
	\caption{Angle defect scaling: (a) $\Lambda_{\kappa,h}$; (b) $K_{\kappa,h}$.}
	\label{fig:numerics-angle}
\end{figure}

\FloatBarrier

\section{Conclusions and Outlook}
\label{sec:conclusion}
The 12-dimensional REQ element is determined by vertex values and scalar edge moments and permits standard assembly without transporting vertex gradients. The main analytical difficulty is controlling the residuals left by these uncoupled gradients. Bounding the planar residual and the angle defect correction yields a weak discrete Miranda--Talenti-type estimate and uniform broken $H^2$ stability for the unstabilized Laplacian scheme on sufficiently refined interpolating surface meshes. Together with approximation and consistency estimates, this stability gives first-order convergence in the energy norm and second-order convergence in the broken $H^1$ and $L^2$ norms. The global discrete harmonic kernel consists only of constants on every connected closed mesh with non-degenerate triangles.

The three numerical examples exhibit these convergence rates. The additional tests show little variation in the Miranda--Talenti constants under the tested perturbations. The absolute angle defects decrease under refinement, although the normalized defects on the implicit surface have not reached a plateau. Further work includes extending the residual analysis to locally refined meshes and surfaces with boundary or evolving geometry, and developing effective preconditioners for the resulting systems.

\appendix

\section{Algebraic Verification of the Local Harmonic Kernel}
\label{app:local-kernel-algebra}

\subsection{Elimination of the enriched bubble function coefficients}
\label{app:local-kernel-bubbles}

We provide the two algebraic steps used in Lemma~\ref{lem:local-harmonic-kernel-new}. Write $v=p+\sum_{i=1}^3\vartheta_i b_i$, where $p\in P_4(K)$, and employ $(\lambda_2,\lambda_3)$ as coordinates. Write
\[
\Delta_K
=g_{22}\partial_{\lambda_2\lambda_2}
+2g_{23}\partial_{\lambda_2\lambda_3}
+g_{33}\partial_{\lambda_3\lambda_3},
\]
where $g_{22}=|\nabla\lambda_2|^2>0$, $g_{23}=\nabla\lambda_2\cdot\nabla\lambda_3$, and $g_{33}=|\nabla\lambda_3|^2>0$, satisfying $g_{22}g_{33}-g_{23}^2>0$.
Since $\Delta_Kp\in P_2(K)$, the cubic and quartic parts of $\Delta_K\sum_i\vartheta_i b_i$ vanish. Restricting the domain successively to $\lambda_3=0$, $\lambda_2=0$, and $(\lambda_2,\lambda_3)=(1-s,s)$ yields
\[
M(g_{22},g_{23},g_{33})
(\vartheta_1,\vartheta_2,\vartheta_3)^T=0,
\]
where
\[
M=
\begin{pmatrix}
	20g_{23}+8g_{33}&g_{33}&-3g_{33}\\
	10g_{23}+5g_{33}&g_{33}&-g_{33}\\
	3g_{22}&-g_{22}&-8g_{22}-20g_{23}\\
	g_{22}&-g_{22}&-5g_{22}-10g_{23}\\
	g_{22}+2g_{23}+g_{33}&-5g_{22}+5g_{33}&-g_{22}-2g_{23}-g_{33}\\
	-3g_{22}-6g_{23}-3g_{33}&8g_{22}-4g_{23}-12g_{33}&g_{22}+2g_{23}+g_{33}
\end{pmatrix}.
\]
The minors formed by rows $1,2,3$ and $1,2,4$, respectively, are given by
\[
\begin{aligned}
	&-g_{33}(70g_{22}g_{23}+11g_{22}g_{33}
	+200g_{23}^2+60g_{23}g_{33}),\\
	&-2g_{33}(20g_{22}g_{23}+3g_{22}g_{33}
	+50g_{23}^2+15g_{23}g_{33}).
\end{aligned}
\]
If both minors vanish, let $r_g=g_{22}/g_{33}$ and $s_g=g_{23}/g_{33}$. Elimination gives
\[
s_g(100s_g^2+40s_g+3)=0,
\qquad
r_g=-50s_g^2-15s_g.
\]
The candidate solutions are $(0,0)$, $(1,-1/10)$, and $(0,-3/10)$; positive definiteness leaves only $(1,-1/10)$. At this point, the minor formed by rows $1, 2,$ and $6$ equals $-14400g_{23}^3\neq0$. Therefore, $M$ has full column rank on every non-degenerate triangle, implying that $\vartheta_1=\vartheta_2=\vartheta_3=0$.

\subsection{Elimination of cubic and quartic harmonic parts}
\label{app:local-kernel-high-order}

Take a vertex of $K$ as the origin, identify the plane of $K$ with the complex plane, and denote complex coordinates by $\zeta$. Let $v_3$ and $v_4$ be the homogeneous cubic and quartic parts of $v$, respectively. Since $\Delta_Kv=0$, there exist $\alpha,\beta\in\mathbb C$ such that
\begin{equation}
	v_3(\zeta)=\operatorname{Re}(\alpha\zeta^3),
	\qquad
	v_4(\zeta)=\operatorname{Re}(\beta\zeta^4).
	\label{eq:homogeneous-harmonic-parts-app}
\end{equation}
Parts of degree at most two satisfy the reduction constraints in \eqref{eq:filtered-local-space} and can thus be subtracted. For an edge emanating from this vertex with terminal vector $\tau$, define $g_e(w)=w(0)-w(\tau)-12D_e^1(w)$.
Along this edge, write $\zeta=s\tau$. Direct integration using $v_k(s\tau)=s^kv_k(\tau)$ and $\partial_nv_k(s\tau)=s^{k-1}\partial_nv_k(\tau)$ yields
\[
g_e(v_3)=-\frac1{10}v_3(\tau),
\qquad
g_e(v_4)=-\frac15v_4(\tau),
\]
and
\[
D_e^{n,2,K}(v_3)=\frac1{180}\partial_nv_3(\tau),
\qquad
D_e^{n,2,K}(v_4)=\frac1{120}\partial_nv_4(\tau).
\]
Consequently, the two reduction constraints give
\[
v_3(\tau)+2v_4(\tau)=0,
\qquad
2\partial_nv_3(\tau)+3\partial_nv_4(\tau)=0.
\]
Reversing $n$ leaves the homogeneous conormal constraints invariant.
Write $\tau=re^{\mathrm i\phi}$, so that we can choose $n=\mathrm i e^{\mathrm i\phi}$. Differentiating \eqref{eq:homogeneous-harmonic-parts-app} directly along $n$ and combining it with the above two constraints yields
\[
\operatorname{Re}\bigl(\alpha e^{3\mathrm i\phi}
+2r\beta e^{4\mathrm i\phi}\bigr)=0,
\qquad
\operatorname{Im}\bigl(\alpha e^{3\mathrm i\phi}
+2r\beta e^{4\mathrm i\phi}\bigr)=0.
\]
Hence, $\alpha+2\beta\tau=0$. Applying this relation to two distinct edges emanating from the same point gives $\beta=\alpha=0$.

\section{Technical Details for the Vertex-Residual Estimates}
\label{app:appendix-b}

\subsection{Complete cancellation of interior edge integrals}
\label{app:edge-cancellation}

Let $e=K^+\cap K^-$ be an interior edge, and let $v_h\in V_h(\mathcal M_h)$. After rigidly aligning the element planes around $e$ and employing the common edge orientation, we prove
\begin{equation}
	\int_e[v_h]\partial_{tt}A_{n,e}(v_h)\dd s_h=0.
	\label{eq:complete-edge-integral-cancellation-app}
\end{equation}
Write $v_h^\pm=p^\pm+w^\pm$, where $p^\pm\in P_4(K^\pm)$ and $w^\pm$ belong to the space spanned by the enriched bubble functions, and decompose $A_{n,e}=A_{n,e}^{(4)}+A_{n,e}^{\rm enr}$ accordingly. Since $A_{n,e}^{(4)}|_e\in P_3(e)$, its second tangential derivative lies in $P_1(e)$; hence the polynomial contribution vanishes directly due to $[v_h]\perp P_1(e)$.

The remaining task is to handle the enriched contribution. On either element, denote the common edge by $e_k=\{\lambda_k=0\}$ and parameterize it via $t\in[0,1]$. Each bubble function vanishes on $e_k$, so $[v_h]|_{e_k}\in P_4(e_k)$. Endpoint continuity and $[v_h]\perp P_1(e_k)$ imply that $[v_h]|_{e_k}=c_k\chi(t)$, where $\chi(t)=t(1-t)(5t^2-5t+1)$. Among the three bubble functions, two contain $\lambda_k^2$ and have zero conormal derivatives on $e_k$; on both sides, the conormal derivatives of the remaining bubble function are proportional to the same profile function $\psi(t)=t^2(1-t)^2(1-2t)$. Therefore, its integral contribution vanishes owing to
\[
\int_0^1\chi(t)\psi''(t)\dd t=0.
\]
This argument is cyclic with respect to $k=1,2,3$, so the enriched contribution vanishes on all three edges. Combining the polynomial and enriched parts proves \eqref{eq:complete-edge-integral-cancellation-app}.

\subsection{Uniformity of local vertex patch estimates}
\label{app:uniform-open-fan}

Fix the number of associated triangles $m$. Translate the central vertex to the origin, scale $h_z$ to one, and fix the first radial direction. If $p_r$ are the successive radial endpoints, denote $G=(p_1,\ldots,p_{m+1})$. Set $K_r(G)=\operatorname{conv}\{0,p_r,p_{r+1}\}$. Closed planar patches satisfy $p_{m+1}=p_1$, whereas unfolded surface fans need not be closed. Shape regularity and quasi-uniformity ensure that the closure $\mathcal{G}_m$ of all normalized admissible configurations is a compact set, and their triangles are uniformly non-degenerate. Normalization preserves all local constraints and moment conditions; it also multiplies $A_z^{\rm flat}$ and $Q_z^{\rm flat}$ by the same factor, so their ratio remains invariant.

For $G\in\mathcal{G}_m$, let $X(G)$ be the normalized open fan space $X_z$ introduced in the proof of Proposition~\ref{lem:surface-patch-residual-new}. Its components are indexed by the labeled triangles $K_r(G)$, even when their planar images overlap. No condition is imposed between the two boundary edges of the open fan; in particular, the closed planar patch space is a subspace of $X(G)$. Using fixed bases of the ambient spaces $S(K_r(G))$, embed each $X(G)$ into $\mathcal{E}_m=\R^{18m}$ equipped with the Euclidean norm. The constraint matrices, the matrix of $A_z^{\rm flat}$, and the matrix of the symmetric bilinear form $B_z^{\rm flat}$ associated with $Q_z^{\rm flat}$ depend continuously on $G$. In particular, the limit of a sequence of pairs $(G_j,x_j)$ satisfying $x_j\in X(G_j)$ remains admissible, and compactness yields $M_m>0$ such that
\begin{equation}
	|B_z^{\mathrm{flat}}(x,y)|
	\leq M_m\|x\|_{\mathcal{E}_m}\|y\|_{\mathcal{E}_m},
	\qquad G\in\mathcal{G}_m,\quad x,y\in\mathcal{E}_m.
	\label{eq:uniform-B-bound-app}
\end{equation}

Using a fixed basis of \eqref{eq:flat-open-fan-kernel}, let $J_G:\R^5\to\mathcal{E}_m$ map its coefficients to their element-wise restrictions. Then $N(G):=\{x\in X(G):A_z^{\mathrm{flat}}(x)=0\}=\operatorname{im}J_G$. Since $J_G$ is continuous and has full column rank, $J_G^TJ_G$ is uniformly positive definite, and the orthogonal kernel projection $P_G:=J_G(J_G^TJ_G)^{-1}J_G^T$ is continuous with respect to $G$. We assert that there exists some $c_m>0$ satisfying
\begin{equation}
	A_z^{\mathrm{flat}}(x)
	\geq c_m\|x\|_{\mathcal{E}_m}^2
	\quad
	\forall\,G\in\mathcal{G}_m,
	\quad
	\forall\,x\in X(G)\text{ with }P_Gx=0.
	\label{eq:uniform-open-fan-coercivity-app}
\end{equation}
Otherwise, there exist $G_j\in\mathcal{G}_m$ and $x_j\in X(G_j)$ satisfying $P_{G_j}x_j=0$, $\|x_j\|_{\mathcal{E}_m}=1$, and $A_z^{\mathrm{flat}}(x_j)\to0$. Along some subsequence, $G_j\to G^*$ and $x_j\to x^*$. Admissibility and continuity give
\[
A_z^{\mathrm{flat}}(x^{*})=0,
\qquad P_{G^{*}}x^{*}=0,
\qquad \|x^{*}\|_{\mathcal{E}_m}=1.
\]
The first two relations indicate that $x^*$ both lies in $N(G^*)$ and is orthogonal to it, contradicting the third. This proves \eqref{eq:uniform-open-fan-coercivity-app}.

For arbitrary $x\in X(G)$, set $w=(I-P_G)x$. Then $w\in X(G)$ and $P_Gw=0$. Since $P_Gx$ belongs to the harmonic kernel in \eqref{eq:flat-open-fan-kernel}, $A_z^{\mathrm{flat}}(x)=A_z^{\mathrm{flat}}(w)$. The cyclic determinant sum in \eqref{eq:planar-fan-forms} is unchanged by adding a common harmonic quadratic polynomial, so $B_z^{\mathrm{flat}}(x,x)=B_z^{\mathrm{flat}}(w,w)$. Combining \eqref{eq:uniform-B-bound-app} with \eqref{eq:uniform-open-fan-coercivity-app} yields
\[
|Q_z^{\mathrm{flat}}(x)|
=|B_z^{\mathrm{flat}}(w,w)|
\leq M_m\|w\|_{\mathcal{E}_m}^2
\leq \frac{M_m}{c_m}A_z^{\mathrm{flat}}(x).
\]
Under the uniform valence upper bound, only a finite number of $m$ can occur. Consequently, undoing the normalization proves \eqref{eq:flat-open-fan-bound} with a uniform constant.

\end{document}